\documentclass{amsart} 
\usepackage{amsmath}
\usepackage[mathscr]{eucal} 
\usepackage{amssymb}
\usepackage{latexsym}
\usepackage{amsthm} 
\theoremstyle{plain} 

\newtheorem{theorem}{Theorem\,\!}[]

\newtheorem{theorema}{Theorem A\!\!}
\newtheorem{theoremb}{Theorem B\!\!}
\newtheorem{theoremc}{Theorem C\!\!}

\newtheorem{proposition}{Proposition}

\newtheorem{lemma}{Lemma}  
\newcommand{\supp}{\mathop{\mathrm{supp}}\nolimits} 
 
\newcommand{\card}{\mathop{\mathrm{card}}\nolimits}

\newcommand{\pv}{\mathop{\mathrm{p.v.}}\nolimits} 
\numberwithin{equation}{section}  
\theoremstyle{definition} 
\newtheorem{definition}{Definition}

\newtheorem{assertiontwo}{Proposition $A(j,k)$\!\!}

\theoremstyle{remark}
\newtheorem{remark}{Remark}      
\theoremstyle{sublemma}
\newtheorem{sublemma}{Sublemma}

\def\XXint#1#2#3{{\setbox0=\hbox{$#1{#2#3}{\int}$}
\vcenter{\hbox{$#2#3$}}\kern-.5\wd0}}

\title[oscillatory singular integrals]
{Weighted weak type $(1,1)$ estimates for oscillatory singular integrals 
with Dini  kernels }  
\author{Shuichi Sato} 
 
\begin{document} 

\address{Department of Mathematics, 
Faculty of Education, 
Kanazawa University,    
Kanazawa 920-1192, 
Japan} 
\email{shuichi@kenroku.kanazawa-u.ac.jp} 

\begin{abstract}
We consider $A_1$-weights and prove weighted weak type $(1,1)$ estimates 
 for oscillatory singular integrals with kernels satisfying a Dini condition. 
\end{abstract}  
  \thanks{1991 {\it Mathematics Subject Classification.\/}
 Primary 42B20. 
 \endgraf
  {\it Key Words and Phrases.} 
Oscillatory singular integrals, rough operators. }
\maketitle 

\section{Introduction} \label{s1} 

We consider an oscillatory singular integral  operator of the form: 
$$T(f)(x) = \pv \int_{\text{${\bold R}^n$}} e^{iP(x,y)}K(x - y)f(y)\, dy 
= \lim_{\epsilon \to 0} \int_{|x-y|>\epsilon} e^{iP(x,y)}K(x - y)f(y)\, dy ,  $$
where $P$ is a real-valued polynomial:  
\begin{equation} \label{tag 1.1}   
P(x,y) = \sum_{|\alpha| \leq M, |\beta| \leq N} 
a_{\alpha \beta}x^{\alpha}y^{\beta}, 
\end{equation} 
and $f\in \frak S(\bold R^n)$ (the Schwartz space).    
\par
Let $K \in C^1(\bold R^n \setminus \{0\})$ satisfy 
\begin{equation} \label{tag 1.2}   
|K(x)| \leq c|x|^{-n}, \qquad \qquad |\nabla K(x)| \leq c|x|^{-n-1} ; 
\end{equation} 
\begin{equation}\label{tag 1.3}  
\int_{a<|x|<b}K(x)\, dx = 0  \quad \text{for all $a, b$ with $0<a<b$}. 
\end{equation} 
  The smallest constant for which (1.2) holds will be denoted by $C(K)$.  
The following results are known.
\begin{theorema}[{Ricci-Stein \cite{5}}] Let $1 < p < \infty$.  Then, $T$ is bounded on $L^p(\bold R^n)$ with the operator norm  bounded by a constant depending only on the total degree of $P$,  $C(K)$, $p$  and the dimension $n$.
\end{theorema}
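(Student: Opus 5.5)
We will prove Theorem A by induction on the degree of $P$, following Ricci and Stein. First, $P$ may be reduced. Terms of $P$ that depend only on $x$, or only on $y$, produce unimodular factors $e^{ip(x)}$ and $e^{iq(y)}$. These can be absorbed into $f$ and into the output without changing $L^p$ norms. So we may assume $P(x,y)$ consists only of mixed monomials $x^\alpha y^\beta$ with $|\alpha|,|\beta|\ge1$. We order these by the degree $k$ in $x$ and argue by induction on $(\text{total degree}, k)$. The base case, $P\equiv 0$, is the classical Calder\'on--Zygmund theorem. Under (1.2) and (1.3), the principal-value operator is bounded on $L^2$ and satisfies the H\"ormander condition, so it is bounded on $L^p$ with norm depending only on $C(K)$, $p$ and $n$.

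For the inductive step, write $P(x,y)=\sum_{|\alpha|=k,|\beta|\ge1}a_{\alpha\beta}x^\alpha y^\beta+R(x,y)$, where $R$ has lower degree in $x$. Both $T$ and the class of kernels are invariant under dilations $x\mapsto\lambda x$, which preserve $C(K)$ and (1.3). Using them, together with translations (which re-expand $P$ but do not raise the degree), we normalize the top coefficients so that $\sum|a_{\alpha\beta}|=1$. Split $K=K\chi_{\{|x|\le1\}}+K\chi_{\{|x|>1\}}=K_0+K_\infty$, with corresponding operators $T_0$ and $T_\infty$.

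For the local part $T_0$, we localize $f$ to a cube $Q$ of side $1$ centred at $h$, for $x$ in the concentric double cube. We expand $P(x,y)$ around $(h,h)$. Writing $x^\alpha=((x-h)+h)^\alpha$ and using $|x-y|\le1$, the top-degree mixed terms differ from a polynomial of lower degree in $x$ by an error $E(x,y)$ satisfying $|e^{iE}-1|\lesssim|x-y|$. The operator with kernel $e^{iP}K_0$ then splits into two pieces. The first is an operator with phase of lower degree, bounded by the inductive hypothesis after a unimodular modification. The second has an integrable kernel bounded by $|x-y|^{1-n}\chi_{\{|x-y|\le1\}}$, hence is bounded by Young's inequality. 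Summing over the disjoint cubes gives the bound for $T_0$, uniformly in $h$.

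For the global part $T_\infty$, decompose dyadically, $K_\infty=\sum_{j\ge0}K_j$ with $K_j=K\chi_{\{2^j<|x|\le2^{j+1}\}}$, and let $T_j$ be the corresponding operators. Each $T_j$ is trivially bounded on every $L^p$ with norm $O(1)$. The aim is the decay estimate $\|T_j\|_{2\to2}\le C2^{-\epsilon j}$. For this we use the $TT^*$ method. The kernel of $T_jT_j^*$ is
\[
\int e^{i(P(x,z)-P(y,z))}K_j(x-z)\overline{K_j(y-z)}\,dz .
\]
The phase, as a function of $z$, has top-degree coefficients of size about $|x-y|$ times the normalized coefficients. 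We then apply van der Corput's lemma for polynomial phases (the Ricci--Stein sublevel/oscillatory integral estimate), using the $C^1$ bound $|\nabla K|\le c|x|^{-n-1}$ to handle the amplitude. This gives, after rescaling to the annulus of size $2^j$, a bound of the form $2^{-jn}\min(1,(2^{j}|x-y|)^{-\delta})$. Hence $\|T_jT_j^*\|_{2\to2}\lesssim 2^{-\epsilon j}$. Interpolating the $L^2$ decay with the trivial $L^p$ bounds gives geometric decay on $L^p$ for every $1<p<\infty$, and summing over $j$ finishes the proof.

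The main obstacle is the oscillatory estimate for $T_jT_j^*$, namely getting decay that is uniform in the coefficients of $P$ once the top part is normalized. This requires an $n$-dimensional van der Corput lemma with bounds depending only on the degree. I would first try to prove it by integrating along lines in a direction where the top homogeneous part has a large derivative.
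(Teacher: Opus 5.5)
The paper does not prove Theorem A; it quotes it from Ricci--Stein. Its proof of Theorem 1 (\S\S 2--4) runs the same scheme, and your outline has that architecture: dilation normalization, $K=K_0+K_\infty$, a local comparison plus Young, and dyadic pieces with $L^2$ decay interpolated against trivial bounds. Two steps, however, fail as written.

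\textbf{The local step.} It is incompatible with your induction. Near $(h,h)$ the error $x^\alpha y^\beta-y^{\alpha+\beta}=O(|x-y|\,|y|^{|\alpha|+|\beta|-1})$ is not uniform in $h$. If you translate first, $(x+h)^\alpha(y+h)^\beta$ produces terms $x^\alpha y^{\beta'}$ with $|\alpha|=k$, $1\le|\beta'|<|\beta|$, and coefficients of size up to $|h|^{|\beta|-|\beta'|}$. These still have $x$-degree $k$, so the translated phase has the same total degree and the same $k$, and your hypothesis does not apply. No cleverer replacement helps. For $n=1$, $P=xy^2$, the mixed second difference of $xy^2-a(y)-b(x)$ over $[h,h+1]^2$ equals $2h+1$. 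An error with $|E|\le C|x-y|$ would force it to be at most $2C$.

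The only terms you may move into the error are those whose coefficients are both normalized and invariant under $(x,y)\mapsto(x+h,y+h)$, and removing them must lower the induction parameter. This is why the paper, following Ricci--Stein, proceeds as follows:
\begin{itemize}
\item it uses the double induction $A(j,k)$, with bounds uniform in all coefficients;
\item it normalizes only $\max_{|\alpha|=j,|\beta|=k+1}|a_{\alpha\beta}|=1$;
\item it replaces only $\sum_{|\alpha|=j,|\beta|=k+1}a_{\alpha\beta}x^\alpha y^\beta$ by $\sum a_{\alpha\beta}y^{\alpha+\beta}$;
\item it uses $R(x+h,y+h)=R_1(x,y,h)+\sum a_{\alpha\beta}x^\alpha y^\beta$ with $R_1$ of type $R_{jk}$.
\end{itemize}
Inducting on the total degree of the mixed part, and normalizing and replacing only its top homogeneous part, would also be consistent. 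Normalizing all terms with $|\alpha|=k$ is not.

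\textbf{The global step.} Your kernel bound $2^{-jn}\min(1,(2^j|x-y|)^{-\delta})$ for $T_jT_j^*$ is false. The $z$-coefficients of $P(x,z)-P(y,z)$ are not of size $|x-y|$; they are polynomials such as $\sum_\alpha a_{\alpha\beta}(x^\alpha-y^\alpha)$, which vanish on hypersurfaces through $x$. For $P=x_1y_1$ in $\bold R^2$ the phase is $(x_1-y_1)z_1$. It is identically zero on $\{y_1=x_1\}$ even where $|x-y|\sim 2^j$, and there the kernel is in general of size $2^{-jn}$.

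What the oscillatory estimate actually gives is a bound in terms of such a coefficient polynomial. Compare Lemma 5(2): $|H_{km}(x,y)|\le c2^{-kn}2^{-m}|q(x)-q(y)|^{-1/M}$. To get $\|T_jT_j^*\|\lesssim 2^{-\epsilon j}$ by Schur's test, you then need a sublevel-set estimate, uniform in the lower-order coefficients, for $|\{y\in B(x,2^{j+2}) : |q(x)-q(y)|\le\mu\}|$. In this paper that is Lemma 6; in Ricci--Stein it is the uniform integrability of $|Q|^{-\epsilon}$ for normalized polynomials. This measure estimate, not the van der Corput step you single out, is the missing key ingredient.
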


\begin{theoremb}[{Chanillo-Christ \cite{1}}] The operator $T$ is bounded from $L^1(\bold R^n)$ to the 
weak $L^1(\bold R^n)$ space with the operator norm  bounded by a constant depending only on the total degree of $P$, $C(K)$ and the dimension $n$.
\end{theoremb}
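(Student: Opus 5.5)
We would prove Theorem B (the Chanillo--Christ weak type $(1,1)$ bound) by induction on the degree of $P$, taking Theorem A as given, and combining it with a Calder\'on--Zygmund decomposition adapted to the oscillation.

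\textbf{Reduction.} First we reduce to phases that are genuinely oscillatory. Writing $P(x,y)$ in the variables $(x-y,y)$, the pure $x$-terms and pure $y$-terms only contribute unimodular factors $e^{iq(x)}$ and $e^{ir(y)}$. These do not affect weak $L^1$ norms, so we may discard them. If $P$ has degree at most $1$ in its mixed part, the mixed term is $\langle Bx,y\rangle$. Writing $\langle Bx,y\rangle=\langle Bx,x\rangle-\langle Bx,x-y\rangle$, and handling the leftover by the induction hypothesis, we arrive at the standard Calder\'on--Zygmund case. So we assume the theorem for all polynomials of degree $<d$ (with mixed part) and prove it for degree $d$. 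By dilation invariance (the class of $K$ and the constant $C(K)$ are dilation invariant) we normalize the top-degree mixed coefficients so that $\sum|a_{\alpha\beta}|=1$ over $|\alpha|+|\beta|=d$ with $|\alpha|,|\beta|\ge1$.

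\textbf{Decomposition.} Fix $\lambda>0$ and $f\in L^1$. Perform the Calder\'on--Zygmund decomposition $f=g+\sum_Q b_Q$ at height $\lambda$. The good part is handled by Theorem A with $p=2$. Split the kernel as $K=K\chi_{|x|<1}+K\chi_{|x|\ge1}$, giving $T=T_0+T_\infty$.
\begin{itemize}
\item For $T_0$, on $|x-y|<1$ we Taylor expand the top-degree part of $P$ about a point $x_0$. Its variation is then controlled, and $T_0$ differs from an operator with lower-degree phase by a localized operator with kernel bounded by $C|x-y|^{1-n}$. That operator is integrable locally and bounded on $L^1$ after localizing $f$ to unit cubes. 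The induction hypothesis then gives the weak $(1,1)$ bound for $T_0$.
\item For $T_\infty$, split the bad part into cubes with side $\ge1$ and side $<1$. For small cubes, $e^{iP(x,y)}$ varies by $O(\ell(Q)|x-y|^{d-1})$ in $y\in Q$. The standard argument with $\int b_Q=0$ and the smoothness of $K$ works only when $|x-y|\lesssim \ell(Q)^{-1/d}$; outside that region, and for large cubes, we must use the oscillation.
\end{itemize}

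\textbf{Main obstacle.} The key step, and the one I expect to be hardest, is an $L^2$ estimate for the oscillatory pieces without cancellation. Decompose $K\chi_{|x|\ge1}=\sum_{j\ge0}K_j$ with $K_j$ supported where $|x|\sim2^j$. Let $T_j$ be the operator with kernel $e^{iP}K_j$. We want
$$\|T_j^*T_j\|_{L^2\to L^2}\le C2^{-\epsilon j}.$$
This follows from van der Corput-type sublevel estimates applied to the kernel $\int e^{i(P(x,z)-P(y,z))}K_j(x-z)\overline{K_j(y-z)}\,dz$, using the normalization of the top coefficients. Then, for the bad functions collected at scale $2^{-jd}$ (grouping cubes by side length comparable to $2^{-jd}$ and averaging $b$ over them), we would follow Chanillo--Christ's trick. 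Replace $b_Q$ by $\tilde b_Q=|Q'|^{-1}\chi_{Q'}\int|b_Q|$ on a slightly enlarged cube, and estimate $\|\sum_j T_j\tilde B_j\|_2^2$ by almost orthogonality, using the decay $2^{-\epsilon|j-k|}$ of the $T_j^*T_k$ interactions. This yields $\lambda^{-1}\|f\|_1$ via Chebyshev. The difference $b_Q-\tilde b_Q$ is controlled by smoothness of the phase at the matched scale. Summing over $j$ with the geometric decay gives the result, with constants depending only on $d$, $C(K)$ and $n$.
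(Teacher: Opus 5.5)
There is a genuine gap, and it is in the part you yourself flag as the main obstacle: the bad function under $T_\infty$. Your handling of $T_0$ (freeze the top terms near the diagonal, absorb an $O(|x-y|^{1-n})$ error, apply induction plus a truncation lemma like Lemma 1) and of $g$ via $L^2$ is fine. Three steps for the bad part fail.

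\begin{enumerate}
\item[(i)] The claim that $e^{iP(x,y)}$ varies by $O(\ell(Q)|x-y|^{d-1})$ for $y\in Q$ is false. The gradient $\nabla_yP$ depends on absolute position, not only on $x-y$: already for $n=1$ and $P(x,y)=xy$, the phase varies by $|x|\ell(Q)$ over $Q$. Recentering at the center $y_Q$ produces lower-order terms with unbounded coefficients, among them a factor $e^{iy_Qy}$ depending on $y$ alone. That factor destroys $\int b_Q=0$, so there is no range $1\le|x-y|\lesssim\ell(Q)^{-1/d}$ where the classical cancellation argument works uniformly.
\item[(ii)] Replacing $b_Q$ by $\tilde b_Q=|Q'|^{-1}\chi_{Q'}\int|b_Q|$ cannot be repaired by smoothness. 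Since $\int(b_Q-\tilde b_Q)=-\|b_Q\|_{L^1}$, the error contains the full-size term $e^{iP(x,y_Q)}K_j(x-y_Q)\|b_Q\|_{L^1}$. Moreover $e^{iP(x,y)}K_j(x-y)$ is not smooth in $y$ at the scale of $Q$, certainly not when $\ell(Q)\ge1$.
\item[(iii)] Operator-norm decay $\|T_j\|_{2\to2}\lesssim 2^{-\epsilon j}$, or decay in $|j-k|$ for $T_j^*T_k$, is not the decay needed. The bad function is known only through averages $\int_Q|b|\le c\lambda|Q|$. What must be gained is a factor decaying in the gap $s$ between the kernel scale $2^j$ and the cube scale $2^{j-s}$.
\end{enumerate}

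Two smaller problems. The bilinear case $\langle Bx,y\rangle$ does not reduce to a Calder\'on--Zygmund operator; it is already the first genuinely oscillatory step $A(1,0)\Rightarrow A(1,1)$. Also, normalizing the top total-degree mixed coefficients does not normalize $q$, the coefficient of the top power of $x$, which is all the van der Corput step sees. For example, $xy^3+\mu x^2y$ with $\mu$ small has $q(y)=\mu y$. This is why the paper inducts on bidegree $(j,k)$.

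The missing idea, from Chanillo--Christ and reproduced in \S\S4--6, is that the global part uses no cancellation of $b_Q$ at all. Group $B_i=\sum_{\ell(Q)=2^i}b_Q$ for $i\ge1$, and lump all cubes of side $\le1$ into $B_0$, which behaves like a function bounded by $c\lambda$ at unit scale. Off $\mathcal U=\cup\tilde Q$ one has $V(b)=\sum_{s\ge1}\sum_{j\ge s}V_j(B_{j-s})$. The key estimate is Lemma 8:
\[
\Bigl\|\sum_{j\ge s}G_j(\mathcal B_{j-s})\Bigr\|_2^2\le c\lambda2^{-s}\sum_j\|\mathcal B_j\|_1
\]
whenever $\int_Q|\mathcal B_j|\le\lambda|Q|$ for all cubes of side $2^j$. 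It is proved from the kernel $H_{ji}$ of $G_j^*G_i$ in three steps.

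\begin{enumerate}
\item[(a)] Since $\partial_{z_1}^M(P(z,x)-P(z,y))=M!(q(x)-q(y))$ with $q$ normalized, van der Corput gives $|H_{ji}(x,y)|\le c2^{-jn}\min(1,2^{-i}|q(x)-q(y)|^{-1/M})$ (Lemma 5).
\item[(b)] By the sublevel-set geometry of Lemma 6, the cubes of side $2^{i-s}$ in $B(x,2^{j+2})$ that meet $\{|q-q(x)|\le2^{L(i-s)}\}$ have total measure $\lesssim2^{(n-1)j}2^{i-s}$.
\item[(c)] On the remaining cubes the second bound gains $2^{-i}2^{-L(i-s)/M}$. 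Together with (b) this is Lemma 7, and expanding the square gives Lemma 8.
\end{enumerate}

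Chebyshev at levels $c_\eta2^{-\eta s}\lambda$ with small $\eta>0$ then sums over $s$. Nothing in your proposal plays the role of Lemmas 5--7, and without them the bad part of $T_\infty$ is not controlled.
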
  
\par 
Let $w$ be a locally integrable positive function on $\bold R^n$.  We say 
that $w \in A_1$ if there is a constant $c$ such that 
\begin{equation}\label{tag 1.4} 
M(w)(x)\leq cw(x) \quad \text{a.e.} 
\end{equation} 
where $M$ denotes the Hardy-Littlewood maximal operator. 
The smallest constant for which (1.4) holds will be denoted by $C_1(w)$. 
\par 
It is known that  $T$ is bounded from $L^1_w$ to $L^{1,\infty}_w$ 
(the weak $L^1_w$ space).     

\begin{theoremc}[{\cite{8}}]  There exists a constant $c$ depending 
only on the total 
degree of $P$, $C(K)$, $C_1(w)$ and the dimension $n$ such that   
$$\sup_{\lambda > 0} \lambda w\left(\left\{x \in \bold R^n : 
\left|T(f)(x)\right| > \lambda \right\}\right) \leq c\|f\|_{L^1_w} ,  $$
where $w(E) = \int_E w(x)\, dx$ and $\|f\|_{L^1_w} = \int |f(x)|w(x)\, dx $. 
\end{theoremc}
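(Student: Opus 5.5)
We prove Theorem C by induction on the degree of $P$, following the Chanillo--Christ scheme with the weight inserted at each step. First we reduce to $P$ with no pure $x^\alpha$ or pure $y^\beta$ terms, since these only contribute unimodular factors $e^{iP_1(x)}$ and $e^{iP_2(y)}$ that do not change $|T f|$ or $\|f\|_{L^1_w}$. When $P$ depends only on $x-y$ and $x$ (or when the degree in $x$ is $1$), a translation and modulation argument reduces the operator to a standard Calder\'on--Zygmund operator times unimodular factors. Its weighted weak $(1,1)$ bound for $w\in A_1$ is classical. For the inductive step we write $P(x,y)=\sum_{|\alpha|=k,|\beta|=l}a_{\alpha\beta}x^\alpha y^\beta+R(x,y)$, where $R$ has lower degree in the appropriate sense, and normalize by dilation so that $\sum|a_{\alpha\beta}|=1$. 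Dilations preserve $A_1$ with the same constant $C_1(w)$, so this normalization is harmless.

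Next we split $K=K_0+K_\infty$ with $K_0=K\chi_{\{|x|<1\}}$. The local part $T_0$, with phase $P$, differs from the operator with phase $R$ (plus a term depending only on $x$ after Taylor expansion about a cube center) by an error. To handle it, we cover $\bold R^n$ by unit cubes $Q$ and compare $e^{iP(x,y)}$ with $e^{i(R(x,y)+\text{top part evaluated at } y\text{ replaced by } x_Q)}$. The error kernel is bounded by $C|x-y|^{1-n}\chi_{\{|x-y|<1\}}$. This is an integrable kernel, and its convolution operator is bounded on $L^1_w$ for $A_1$ weights: by Fubini, $\int |k|*|f|\,w\le \int |f|\,(|k|*w)\le C\,Mw\,|f|\le C\,C_1(w)\int|f|w$. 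On the main term we use the induction hypothesis, applied to the lower degree phase with $f\chi_{Q^*}$, and then sum over $Q$ using the bounded overlap of the cubes.

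For the global part $T_\infty$, where $|x-y|\ge1$, we perform a Calder\'on--Zygmund decomposition of $f$ at height $\lambda$ with respect to Lebesgue measure, $f=g+\sum b_j$. We do not use the weighted measure here, since the kernel's cancellation is relative to $dx$. The exceptional set $\bigcup 2Q_j$ has weighted measure $\le C\lambda^{-1}\|f\|_{L^1_w}$, because $w(2Q_j)\le C|Q_j|\inf_{Q_j}w\cdot C_1(w)$. For $g$, we use an $L^2_w$ bound for $T$ with $w\in A_1\subset A_2$. This bound is the weighted Ricci--Stein theorem, which we obtain by the same inductive argument or by interpolation. Together with $|g|\le C\lambda$ and $\int|g|^2w\le C\lambda\|f\|_{L^1_w}$, it handles the good part. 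The bad cubes are divided into two groups. For cubes of side $\ge1$, hence small oscillation relative to the kernel scale... (the cubes with small side) we use the standard smoothness of $K$ and the cancellation of $b_j$ in the region $|x-y|\ge 2\,\mathrm{diam}\,Q_j$ in the usual way. We also replace $e^{iP(x,y)}$ by $e^{iP(x,c_j)}$ when $\mathrm{diam}\,Q_j$ is small relative to the normalized coefficients; the resulting kernel difference is controlled by $|y-c_j|\,|x-y|^{-n}$ times a polynomial factor, which integrates to an $A_1$-controlled quantity against $w$.

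The main obstacle is the large cubes, where $\mathrm{diam}\,Q_j\ge1$ and the oscillation $e^{iP}$ cannot be frozen. There we follow Chanillo--Christ and decompose $K_\infty=\sum_{m\ge0}K_m$ dyadically. The plan is to prove an $L^2_w$ estimate with decay, $\|\sum_{Q_j\sim 2^s} T_m b_j\|_{L^2_w}^2\le C2^{-\epsilon m}\lambda\|f\|_{L^1_w}$, via a $TT^*$ argument and van der Corput type bounds on the oscillatory kernel of $T_mT_m^*$. The weight enters through the pointwise domination of that kernel by $2^{-\epsilon m}$ times an approximate identity at scale $2^m$, which against $A_1$ weights gives $\lesssim 2^{-\epsilon m}Mw\le C\,C_1(w)2^{-\epsilon m}w$. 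Getting this weighted almost-orthogonality, in particular replacing $b_j$ by $|b_j|$ averaged to the $\lambda$ level on each cube, is the step that demands the most care. Summing in $m$ and applying Chebyshev's inequality then completes the proof.
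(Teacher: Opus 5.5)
Your overall architecture matches the paper: Ricci--Stein induction, the split $K=K_0+K_\infty$, a Calder\'on--Zygmund decomposition in Lebesgue measure, $A_1$ control of the exceptional set, and a weighted $L^2$ bound for the good part obtained by interpolation as in Lemma 4. But the step you single out as delicate is where the proof breaks, and the mechanism you propose for it is false.

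The $TT^*$ kernel $H_{km}(x,y)=\int e^{i(P(z,y)-P(z,x))}\overline{L}_k(z-x)L_m(z-y)\,dz$ only satisfies $|H_{km}(x,y)|\le c2^{-kn}2^{-m}|q(x)-q(y)|^{-1/M}$ (Lemma 5), where $q$ is the coefficient of $x_1^M$. There is no gain near the level set $\{q=q(x)\}$; in particular $H_{mm}(x,x)=\|L_m\|_2^2\sim 2^{-mn}$, so no bound by $2^{-\epsilon m}$ times an approximate identity can hold. The gain is only in measure. By Lemma 6, the cubes of side $2^{i-s}$ in $B(x,2^{j+2})$ that meet $\{|q-q(x)|\le 2^{L(i-s)}\}$ have total measure $\lesssim 2^{(n-1)j}2^{i-s}$. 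Exploiting this requires the cube-averaged bound $\int_Q|\mathcal B_{i-s}|\le\lambda|Q|$ on the bad function at that scale (Lemma 7).

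Hence the decay is $2^{-s}$ in the \emph{relative} scale (kernel at scale $2^j$, cubes of side $2^{j-s}$), not $2^{-\epsilon m}$ in the kernel scale. Indeed, a $b_Q$ concentrated near one point of a cube of side comparable to $2^m$ gives $\|T_mb_Q\|_2^2\sim\lambda\|b_Q\|_1$, with no decay.

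Worse, the weight cannot enter the $TT^*$ argument at all. Writing $\int|\sum_j G_j(\mathcal B_{j-s})|^2w$ puts $w(z)$ inside the $z$-integral. An $A_1$ weight with bounded constant may oscillate with the phase (e.g. $2+\cos(\mu z_1^M)$), which kills the van der Corput gain. Taking absolute values instead leaves $2^{-mn}Mw$ with no decay.

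The missing idea is Vargas' interpolation with change of measure:
\begin{enumerate}
\item Prove only the unweighted bound $\|\sum_{j\ge s}G_j(\mathcal B_{j-s})\|_2^2\le c\lambda 2^{-s}\sum_j\|\mathcal B_j\|_1$ (Lemma 8), together with a weighted $L^1_v$ bound without decay (Lemma 9).
\item Split the cubes according to whether $\inf_QMv<t2^{-s}$. This gives $\int_{E^s_\lambda}\min(v,t)\le c\sum_Q|Q|\min(t2^{-s},\inf_QMv)$.
\item Integrate against $t^{-\theta}\,dt/t$ and use $\lambda|Q|\le\int_Q|f|$. This gives $\int_{E^s_\lambda}v^{1-\theta}\le c\lambda^{-1}2^{-\theta s}\int|f|(Mv)^{1-\theta}$.
\item Take $v=w^{1+\delta}\in A_1$ (reverse H\"older). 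This yields $w(E^s_\lambda)\le c\lambda^{-1}2^{-s\delta/(1+\delta)}\|f\|_{L^1_w}$, which is summable in $s$.
\end{enumerate}

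Several other steps also fail as written.

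\begin{itemize}
\item Small cubes: freezing $e^{iP(x,y)}$ to $e^{iP(x,c_j)}$ on $\{|x-y|\ge 1\}$ leaves an error $\lesssim|y-c_j|\,|\nabla_yP|$. This grows polynomially in $|x|$, and after translating $c_j$ to $0$ the lower coefficients are arbitrary. So it is not integrable against $|x-y|^{-n}w(x)$. The paper never uses the cancellation of $b_Q$ in Proposition 1. It puts all cubes of side $\le1$ into $B_0$ and treats them by the same oscillatory estimate with $s=j$.
\item Local part: the comparison phase must agree with $P$ on the diagonal; otherwise the error kernel can be of size $|x-y|^{-n}$, not $|x-y|^{1-n}$. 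Freezing $y$ at $x_Q$ in the top part does not achieve this. The paper translates, $R(x+h,y+h)=R_1(x,y,h)+\sum a_{\alpha\beta}x^\alpha y^\beta$, with $R_1$ of the lower type $R_{jk}$, so the induction hypothesis applies uniformly in $h$. It then replaces $x^\alpha y^\beta$ by $y^{\alpha+\beta}$ for $|x|\le1$, $|y|\le2$, which costs only $c|x-y|$.
\item Base case: it is misstated. Phases such as $|x-y|^2$ (depending only on $x-y$) or $x_1y_1^2$ (degree one in $x$) are genuinely oscillatory. The induction must start from phases of the form $\phi(x)+\psi(y)$.
\end{itemize}
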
 
\par 
Let $K$ be  locally integrable away from the origin.   
Put, for $r\geq 1$, $0<t\leq 1$  and $R>0$,        
$$\omega_{r,R}(t)=\sup_{|y|\leq Rt/2}\left(R^{-n}\int\limits_{R\leq 
|x|\leq 2R}\left|R^{n}\left(K(x-y)-K(x)\right)\right|^r\, dx\right)^{1/r}.$$
We say that the kernel $K$ satisfies the  $D_r$-condition if 
\begin{gather*} 
B_r=\int_0^1 \omega_r(t)\, \frac{dt}{t} < \infty \qquad \text{where} \quad 
\omega_r(t)= \sup_{R>0}\omega_{r,R}(t) ;  
\\  
C_r=\sup_{R>0}\left(R^{-n}\int\limits_{R\leq 
|x|\leq 2R}\left|R^{n}K(x)\right|^r\, dx\right)^{1/r} < \infty .    
\end{gather*}  
By the usual modifications we can also define the $D_\infty$-condition. 
In this note we shall prove the following results, which will improve Theorems 
B and C. 
\begin{theorem}  Let $r>1$ and  $1/r + 1/u=1$.  Suppose 
the kernel $K$ satisfy the $D_r$-condition and $(1.3)$, and suppose  
$w^u\in A_1$.
Then, there exists a constant $c$ depending only on the total 
degree of $P$, $B_r$, $C_r$, $C_1(w^u)$, $r$ and the dimension $n$ such that   
$$\sup_{\lambda > 0} \lambda w\left(\left\{x \in \bold R^n : 
\left|T(f)(x)\right| > \lambda \right\}\right) \leq c\|f\|_{L^1_w}.  $$
\end{theorem} 
\begin{theorem}  Suppose that $K$ satisfies the $D_1$-condition 
and $(1.3)$. 
Then, there exists a constant $c$ depending only on the total 
degree of $P$, $B_1$, $C_1$ and the dimension $n$ such that   
$$\sup_{\lambda > 0} \lambda \left|\left\{x \in \bold R^n : 
\left|T(f)(x)\right| > \lambda \right\}\right| \leq c\|f\|_{L^1}.  $$
\end{theorem} 
\par 
Every kernel satisfying (1.2) satisfies the $D_\infty$-condition.  If 
$K(x)=|x|^{-n}\Omega(x')$, $x'=x/|x|$, and if $\Omega$ satisfies the 
$L^r$-Dini condition on $S^{n-1}$, then $K$ satisfies the $D_r$-condition. 
\par      
These theorems will be proved by a double induction as in \cite{5}, \cite{1} 
and \cite{8}.  In this note we shall prove only Theorem 1. Theorem 2 can be 
proved similarly.   
Let $P$ be a polynomial of the form in (1.1). We assume that there exists $\alpha$ such that $|\alpha| = M$ and $a_{\alpha \beta} \ne 0$ for some $\beta$.  We 
write 
\begin{equation}\label{tag 1.5} 
P(x,y) = \sum_{|\alpha| \leq M} x^{\alpha}Q_{\alpha}(y)   
\end{equation} 
and define 
  $L = \max \{\text{deg}(Q_{\alpha}) : Q_{\alpha} \ne 0,  |\alpha| = M \}$.
Then $0 \leq L \leq N$. We assume that $L \geq 1$ and 
$\max_{ |\alpha| = M,  |\beta| = L} |a_{\alpha \beta}| = 1$.     
Under this assumption on a polynomial $P$, we define 
$$T_{\infty}(f)(x) = \int_{|x-y|>1} e^{iP(x,y)}K(x - y)f(y)\, dy. $$
To prove Theorem 1, we shall use  the following result in the 
induction. 

\begin{proposition}  Let $\eta$, $\rho>0$ and let the kernel $K$, the 
weight $w$ 
and the exponents $r$, $u$ be as in Theorem $1$.  Then, 
there exists a constant $c$ depending only on $\eta$, $\rho$, the 
total degree of $P$, $r$  and the dimension $n$ 
such that  if  $C_1(w^u) \leq \eta$, $B_r$, $C_r \leq \rho$,
$$\sup_{\lambda > 0} \lambda w\left(\left\{x \in \bold R^n : 
\left|T_{\infty}(f)(x)\right| > \lambda \right\}\right) \leq c\|f\|_{L^1_w}. $$
\end{proposition}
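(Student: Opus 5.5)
We will prove the Proposition by the Chanillo--Christ method, adapted to weights as in \cite{8}. Throughout, Theorem 1 is assumed (induction hypothesis) for polynomials that are lower in the double induction on $(M,L)$. The base cases are $M=0$, and $L=0$, where the phase reduces to a function of $y$ (or of $x$) alone after factoring.

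\textbf{Step 1 (Calder\'on--Zygmund decomposition).} Fix $\lambda>0$ and $f\in L^1_w$, and perform the Calder\'on--Zygmund decomposition of $f$ at height $\lambda$ with respect to Lebesgue measure. This gives $f=g+\sum_j b_j$, with $\supp b_j\subset Q_j$, $\int|b_j|\le c\lambda|Q_j|$, $|g|\le c\lambda$, and
$$w\Bigl(\bigcup Q_j^*\Bigr)\le c\lambda^{-1}\|f\|_{L^1_w}.$$
This last bound uses $w\in A_1$, which follows from $w^u\in A_1$ by H\"older's inequality. The good part is handled by $L^2_w$-boundedness of $T_\infty$. This follows from Theorem A in its weighted form: the local part of $T_\infty$ is bounded by $M$ plus a local Calder\'on--Zygmund operator, and $w\in A_1\subset A_2$ is used there. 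Alternatively one can proceed by direct interpolation.

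\textbf{Step 2 (splitting the bad part).} Split the bad part into two families.

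\emph{Small cubes}, $\ell(Q_j)\le1$. Here $T_\infty b_j$ is supported near $Q_j$ at unit scale. We replace $e^{iP(x,y)}$ by $e^{iP(x,c_j)}$, where $c_j$ is the centre of $Q_j$, and the error is $O(\ell(Q_j))$ on $|x-c_j|\le 2$. We also subtract the means of $b_j$. The $D_r$-condition together with H\"older's inequality, with the weight estimate
$$\Bigl(R^{-n}\int_{|x-c_j|\sim R} w^u\Bigr)^{1/u}\le c\,\inf_{Q_j} w,$$
controls $\sum_j\int_{(Q_j^*)^c}|\cdot|\,w$ via $\sum_k\omega_r(2^{-k})\lesssim B_r$. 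The smoothness error is summed over dyadic shells up to radius $1$, giving $\ell(Q_j)\log(1/\ell(Q_j))\cdot$ (bounded) $\lesssim 1$.

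\emph{Large cubes}, $\ell(Q_j)>1$. Here cancellation must come from the oscillation rather than from the means of $b_j$.

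\textbf{Step 3 (large cubes: the main obstacle).} Following Chanillo--Christ, we use the top-degree part $x^\alpha Q_\alpha(y)$ with $|\alpha|=M$ and $\deg Q_\alpha=L$. Decompose $K=\sum_{k\ge0}K_k$ with $K_k=K\chi_{\{2^k<|x|\le 2^{k+1}\}}$, and group the cubes according to their size relative to $2^k$.

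For cubes with $\ell(Q_j)\ge 2^{k}$ (roughly), we use an $L^2$ $TT^*$ estimate (van der Corput/almost orthogonality). It gives
$$\Bigl\|\sum_j T_k b_j\Bigr\|_{L^2}^2\lesssim 2^{-\epsilon k}\lambda\|f\|_1,$$
since the oscillation factor $e^{i(P(x,y)-P(x,z))}$ has nondegenerate top coefficients at scale $2^k\ge1$ under the normalization $\max|a_{\alpha\beta}|=1$.

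To pass to the weighted setting, we use H\"older's inequality with $w^u\in A_1$ and the $L^r$ size condition $C_r$, in place of the pointwise bound (1.2). Concretely, we estimate
$$w(\{\cdot\})\le \lambda^{-2}\int |\cdot|^2 w,$$
and run the $TT^*$ argument with the kernel $|K_k(x-y)K_k(x-z)|w(x)$. The $D_r/C_r$ control of $K$ is paired against the $L^u$ average of $w$, which is dominated by $\inf w$ on comparable cubes.

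For cubes with $\ell(Q_j)<2^k$, we either treat them as in the small-cube case at scale $2^k$ (using $D_r$ smoothness plus cancellation obtained after freezing $y=c_j$ in the lower-order terms), or we remove the top-degree term $x^\alpha y^\beta$ with $|\beta|=L$. In the latter case $P$ is reduced to a polynomial with smaller $L$ (or $M$), and the induction hypothesis, i.e.\ Theorem 1 for that polynomial (both the full operator and its truncated version), applies with constants depending only on $\eta,\rho$ and the degree.

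The delicate point is obtaining the decay $2^{-\epsilon k}$ uniformly in the coefficients of $P$ while using only the $L^r$-type assumptions on $K$ and $w^u\in A_1$. I would first try to establish a weighted $L^2$ estimate for each $T_k$ on functions that are averages over cubes, absorbing $w$ via
$$\sup_{Q}\Bigl(|Q|^{-1}\int_Q w^u\Bigr)^{1/u}\le \eta\,\inf_Q w.$$
Summation over $k$ then uses the geometric decay, and the error terms are controlled by $B_r$.
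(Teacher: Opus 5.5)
\textbf{The central gap: weighted decay for the bad part.} Your plan gives no mechanism for decay in the weighted setting. Running $TT^*$ ``with the kernel $|K_k(x-y)K_k(x-z)|w(x)$'' destroys the oscillation once absolute values are taken. Even without them, the van der Corput step (integration by parts in $z_1$, using $(\partial/\partial z_1)^M(P(z,x)-P(z,y))=M!\,(q(x)-q(y))$) needs an amplitude of bounded variation, which an $A_1$ weight is not. Replacing $w$ by $\inf_Q w$ on averages does not repair this.

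The missing idea is Vargas's interpolation with change of measure:
\begin{enumerate}
\item[(i)] Prove decay only in unweighted $L^2$: $\bigl\|\sum_{j\ge s}G_j(\mathcal B_{j-s})\bigr\|_2^2\le c\lambda2^{-s}\sum_j\|\mathcal B_j\|_1$ (Lemma 8, via Lemma 5, the sublevel-set Lemma 6 for $q$, and Lemma 7).
\item[(ii)] Pair it with the trivial bound $\bigl\|\sum_{j\ge s}G_j(\mathcal A_{j-s})\bigr\|_{L^1_v}\le c\lambda\sum|Q|\inf_QM(v)$ (Lemma 9).
\item[(iii)] Sort the cubes by whether $\inf_QM(v)<t2^{-s}$ to get $\int_{E^s_\lambda}\min(v,t)\le c\sum|Q|\min(t2^{-s},\inf_QM(v))$.
\item[(iv)] Integrate against $t^{-\theta}\,dt/t$ and use $w^{1+\delta}\in A_1$ to get $w(E^s_\lambda)\le c\lambda^{-1}2^{-s\delta/(1+\delta)}\|f\|_{L^1_w}$.
\end{enumerate}
The decay parameter is $s$, the gap between the kernel scale $2^j$ and the cube scale $2^{j-s}$ in the regrouping $V(b)=\sum_{s\ge1}\sum_{j\ge s}V_j(B_{j-s})$ off $\mathcal U$. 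It is not the kernel scale, as in your $2^{-\epsilon k}$; for cubes of size comparable to $2^k$ there is no decay at all.

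Since $K$ is only $D_r$, the smoothness van der Corput needs comes from mollifying $N_j$ at scale $2^{j-\delta s}$. Lemma 2 is applied to $L^{(s)}_j$, whose constants grow like $2^{(n+1)\delta s}$ and are absorbed by the decay in $s$. The remainder terms $\sum_{j\ge s}W^{(s)}_j(B_{j-s})$ have $L^1_w$ norm at most $c(\omega_r(2^{-\delta s})+2^{-\delta s})\|f\|_{L^1_w}$, which is summable in $s$ thanks to $B_r$.

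\textbf{Scale bookkeeping and fallbacks.} Off $\mathcal U=\bigcup\tilde Q$ (the $100$-fold enlargements), a cube with $\ell(Q)\ge2^k$ contributes nothing at kernel scale $2^k$. So the regime you assign to $TT^*$ is the trivial one, and all the work is for $\ell(Q)<2^k$ — exactly where you propose to freeze $y=c_j$ or drop the top term. Neither works.
\begin{enumerate}
\item[(a)] Freezing is not uniform in $P$. Only the block $|\alpha|=M$, $|\beta|=L$ is normalized; the other mixed coefficients are arbitrary. Hence $|P(x,y)-P(x,c_j)|$ is not $O(\ell(Q_j))$ with constants depending only on the degree, not even for $|x-c_j|\le2$.
\item[(b)] Dropping the top term leaves an error $e^{iP}-e^{iP'}$ that is not small on any shell $|x-y|\sim2^k$. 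It cannot be summed over shells, since only $\int_{|z|\sim2^k}|K|\le c\,C_r$ is available. Such a reduction works only for $|x-y|\le1$, as in \S3.
\end{enumerate}
The paper's proof of Proposition 1 uses no induction and no cancellation of $b_Q$.

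\textbf{Step 2 and the good part.} Step 2 rests on a false claim: $K_\infty$ lives on $|x-y|>1$ out to infinity, so $T_\infty b_j$ is not localized near $Q_j$ when $\ell(Q_j)\le1$. The paper puts these cubes into $B_0$ and treats them with the same oscillatory estimate (the terms $j=s$). The genuinely unit-scale piece $V_0$ is bounded on $L^1_w$ by $C_r$ and $w^u\in A_1$ alone.

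Finally, $L^2_w$ boundedness of the good part does not follow from Theorem A, which assumes the pointwise bounds (1.2); also $T_\infty$ has no local part. For a $D_r$ kernel the paper proves it in Lemma 4, using $w^u\in A_1$ essentially:
\begin{enumerate}
\item[(1)] mollify at scale $2^{j-\delta j}$;
\item[(2)] interpolate the Ricci--Stein decay $\|G_j\|_2\le c2^{-\epsilon j}$ against $\|U_jf\|^2_{L^2_w}\le c\int|f|^2M_u(w)$ with change of measure;
\item[(3)] control $R_j$ by the Dini condition;
\item[(4)] use $w^s\in A_1$ for some $s>u$.
\end{enumerate}
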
 
\par 
Let  $A(f)(x) =  \pv K*f(x)$.   
We need the following result for the first step of induction for the proof 
of  Theorem 1.      
\begin{proposition}
Let the kernel $K$, the weight $w$ and the exponents $r$, $u$ be as in 
Theorem $1$.  
 Let $\eta$, $\rho>0$. There exists a constant $c$ depending only on $\eta$, 
 $\rho$,  $r$  and the dimension $n$ 
such that  if  $C_1(w^u) \leq \eta$, $B_r$, $C_r\leq \rho$,  then  
 $$\sup_{\lambda > 0} \lambda w\left(\left\{x \in \bold R^n : 
\left|A(f)(x)\right| > \lambda \right\}\right) \leq c\|f\|_{L^1_w}.  $$ 
\end{proposition}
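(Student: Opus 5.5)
We prove Proposition 2 by a weighted Calder\'on--Zygmund decomposition. Two ingredients are needed: an $L^2_w$ bound for $A$, and a weighted H\"ormander-type estimate for the bad part, where the $D_r$-condition is paired with $w^u\in A_1$ through H\"older's inequality.

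\emph{Step 1 ($L^2$ boundedness).} Under the $D_r$-condition (in fact $D_1$ suffices) and the cancellation (1.3), the multiplier $\hat K$ is bounded. The kernel truncated to $\{\epsilon<|x|<1/\epsilon\}$ has Fourier transform bounded uniformly in $\epsilon$: split at $|x|\sim|\xi|^{-1}$, use (1.3) together with $C_r$ for the small part, and use $B_r$ (via a difference $K(x-y)-K(x)$ with $|y|\sim|\xi|^{-1}$) for the large part. Since $w^u\in A_1$ and $u>1$, we get $w\in A_1$ with $C_1(w)\le C_1(w^u)^{1/u}$ by Jensen/H\"older. Hence $w\in A_2$, and $A$ is bounded on $L^2_w$ by the standard weighted theory for Calder\'on--Zygmund operators. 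If one prefers to avoid smoothness beyond Dini, use instead the good-$\lambda$/sharp function approach: the Dini condition $B_r<\infty$ gives $M^\#(Af)\lesssim M_{s}(f)$ for $s\ge u$, so $L^p_w$ boundedness holds for $w\in A_{p/u}$. Either way, an $L^p_w$ bound for some $p>1$ is what is used below.

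\emph{Step 2 (decomposition).} Fix $\lambda>0$ and decompose $f=g+\sum_Q b_Q$ at height $\lambda$ with respect to Lebesgue measure. This gives $|g|\lesssim\lambda$, $\supp b_Q\subset Q$, $\int b_Q=0$, $\sum|Q|\le\lambda^{-1}\|f\|_1$. Since $w\in A_1$, we have $w(Q)/|Q|\le C\inf_Q w$, and therefore
\[ w\Big(\bigcup Q^*\Big)\lesssim \sum_Q \frac{w(Q)}{|Q|}|Q| \lesssim \lambda^{-1}\sum_Q\int_Q|f|w \le \lambda^{-1}\|f\|_{L^1_w}. \]
The same $A_1$ bound gives $\|g\|_{L^2_w}^2\lesssim\lambda\|f\|_{L^1_w}$, and Chebyshev plus Step 1 handles $Ag$.

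\emph{Step 3 (bad part; main obstacle).} It remains to show $\int_{(Q^*)^c}|Ab_Q|\,w\lesssim\int|b_Q|w$; summing over $Q$ and applying Chebyshev then finishes the proof. Let $y_Q$ be the center of $Q$ and $\ell$ its side length. By the cancellation of $b_Q$,
\[ \int_{(Q^*)^c}|Ab_Q|w\le\int_Q|b_Q(y)|\sum_{j\ge1}\int_{2^j\ell\le|x-y_Q|<2^{j+1}\ell}|K(x-y)-K(x-y_Q)|\,w(x)\,dx\,dy. \]
On each annulus, with $R=2^j\ell$, apply H\"older with exponents $r,u$. The kernel factor is at most $R^{-n/u}\,\omega_r(c2^{-j})$ by the definition of $\omega_{r,R}$, since $|y-y_Q|\le c2^{-j}R$. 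The weight factor is
\[ \Big(\int_{|x-y_Q|<2R}w^u\Big)^{1/u}\lesssim R^{n/u}\,\big(M(w^u)(y)\big)^{1/u}\le C_1(w^u)^{1/u}R^{n/u}w(y) \quad\text{for } y\in Q. \]
The powers of $R$ cancel, leaving $\sum_j\omega_r(c2^{-j})\lesssim B_r$ (replacing the dyadic sum by $\int_0^1\omega_r(t)\,dt/t$, using that $\omega_r$ is essentially monotone, with a harmless adjustment of the constant $c$). Thus the integral is bounded by $C(\eta,\rho)\int|b_Q|w$. Finally,
\[ \sum_Q\int|b_Q|w\lesssim\int|f|w+\sum_Q\lambda|Q|\inf_Q w\lesssim\|f\|_{L^1_w}. \]
The delicate point is exactly this matching: the $D_r$-condition is an $L^r$ average rather than a pointwise bound, and $w^u\in A_1$ is what lets the $L^u$ average of $w$ on a large ball be controlled by $w(y)$ at the point $y\in Q$.
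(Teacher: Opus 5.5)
Your argument is correct in substance, but it takes a different route from the paper, which gives no direct proof of Proposition 2. The paper observes that $A$ is bounded on $L^2$ (via [6, pp.\ 25--26]), so $A$ belongs to the class of singular integrals of [6, p.\ 13]. It then quotes [6, Theorem 1.6], which yields the weighted weak type $(1,1)$ for $w^u\in A_1$ with only the unweighted $L^2$ bound as input. You instead prove that cited result by hand, in three pieces:
\begin{itemize}
\item a Calder\'on--Zygmund decomposition at height $\lambda$;
\item a weighted $L^p$ bound for the good part;
\item for the bad part, H\"older's inequality pairing $\omega_r$ on each annulus with $M(w^u)^{1/u}\le C_1(w^u)^{1/u}w$.
\end{itemize}
The last piece is the same mechanism the paper uses for the terms $W_j^{(s)}$ in \S 4. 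Your version is self-contained and shows exactly where $w^u\in A_1$, rather than just $w\in A_1$, enters. The paper's version is shorter and needs no weighted strong-type input.

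One step needs repair. In Step 1, the justification ``$w\in A_1\subset A_2$, hence $A$ is bounded on $L^2_w$ by the standard weighted theory'' is not valid. The Coifman--Fefferman theory for $A_p$ weights uses pointwise regularity of the kernel, such as the $D_\infty$-condition. Under the $D_r$-condition with $r<\infty$ you cannot simply invoke it for $w\in A_2$.

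Your sharp-function alternative, $M^\#(Af)\lesssim M_u f$, is correct. However, it only gives $L^p_w$ bounds when $M$ is bounded on $L^{p/u}_w$, i.e.\ when $p>u$. So for $r\le 2$ (that is, $u\ge 2$) it does not produce the $L^2_w$ estimate you actually use in Step 2.

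The fix is immediate. Take any $p>u$, note that $w\in A_1\subset A_{p/u}$, and treat the good part by Chebyshev at exponent $p$:
\[
\|g\|_{L^p_w}^p\le (c\lambda)^{p-1}\|g\|_{L^1_w}\lesssim \lambda^{p-1}\|f\|_{L^1_w}.
\]
You also need to choose $Q^*$ large enough that $|y-y_Q|\le Rt/2$ with $t\le 1/2$ on every annulus. With these changes Steps 2 and 3 go through as you wrote them, with constants depending only on $\eta$, $\rho$, $r$, $n$.
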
 
\par 
Since $A$ is bounded on $L^2$ (see  \cite[pp.\ 25--26]{6}), if $A$ is 
as in Proposition 2,   we see that $A$ is a 
singular integral operator considered in \cite[p.\ 13]{6}. Hence 
the conclusion of Proposition 2 will follow from 
\cite[p.\ 15, Theorem 1.6]{6}.  
\par 
We shall give the outlines of the proofs of Theorem 1 and Proposition 1 in 
Sections  2 and 4, respectively.  
Our proof of Proposition 1 is based on the techniques  in Christ \cite{3} 
for the proofs of the weak $(1, 1)$ estimates for rough operators 
(see also Christ-Rubio \cite{4} and  Sato \cite{7}).  We also use the geometrical 
argument of Chanillo-Christ \cite{1}.    We have to prove a key estimate (Lemma 8 in \S 5) in the unweighted case in order to apply the method of Vargas 
 \cite{9} involving an interpolation with change of measure.      
To prove Lemma 8, we need a geometrical result for polynomials 
(Lemma 6 in \S 5).   
We shall prove Lemma 6 in \S 6 by using the results appearing in the proof 
of Chanillo-Christ \cite[{\sc Lemma} 4.1]{1}. 
Lemmas 6 and 8 have been proved in \cite{8}. We include the proofs and some 
other parts of \cite{8} almost verbatim  for the sake of completeness.

\section{Outline of proof of Theorem 1}  

To apply the induction argument of \cite{5} we need some preparation. 
We may assume that $M \geq 1$ and $N \geq 1$; otherwise Theorem 1 reduces to 
Proposition 2.   
\par 
 We write a polynomial in (1.1) as follows:
$$P(x,y) = \sum_{j=0}^M \sum_{|\alpha| = j} x^{\alpha}Q_{\alpha}(y) =:   
\sum_{j=0}^MP_j(x,y). $$ 
 We further decompose $P_j$ as follows:
$$P_j(x,y) = \sum_{t=0}^N \sum\limits_{\substack{|\alpha| = j \\ |\beta| = t 
}} a_{\alpha \beta}x^{\alpha}y^{\beta} =:  \sum_{t=0}^N P_{jt}(x,y).  $$
  For $j = 1, 2, \dots , M$ and $k = 0, 1, \dots , N$, define 
\begin{equation} \label{tag 2.1} 
R_{jk}(x,y) =  \sum_{s=0}^{j-1}P_s(x,y) + \sum_{t=0}^kP_{jt}(x,y). 
\end{equation} 
Note that $R_{jN} =  \sum_{s=0}^{j}P_s \quad (j = 1,2, \dots , M)$.
\par 
For $j = 1, 2, \dots , M$ and $k = 0, 1, \dots , N$,  we consider the 
following propositions.   
\begin{assertiontwo}  
Let  $\eta$, $\rho > 0$.   There exists a constant $c$ depending only on 
$\eta$, $\rho$, $j$, 
$N$, $r$ and the dimension $n$ such that if  $C_1(w^u) \leq \eta$, 
$B_r$, $C_r\leq \rho$   
and if $R_{jk}$  is a polynomial of the form in $(2.1)$, then 
$$\sup_{\lambda > 0} \lambda w\left(\left\{x \in \bold R^n : 
\left|T_{jk}(f)(x)\right| > \lambda \right\}\right) \leq c\|f\|_{L^1_w} ,  $$
where 
$$T_{jk}(f)(x) =  \pv \int_{\text{${\bold R}^n$}} 
e^{iR_{jk}(x,y)}K(x - y)f(y)\, dy. $$
\end{assertiontwo}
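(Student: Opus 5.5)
We prove Proposition $A(j,k)$ for all $j=1,\dots,M$ and $k=0,\dots,N$ by a double induction on $(j,k)$ in lexicographic order, in the spirit of Ricci--Stein \cite{5}. The base of the induction reduces to Proposition 2. The inductive step combines Proposition 1, applied at large scales, with a local argument that removes the top-order terms.

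\emph{Base case and reductions.} For $(j,k)=(1,0)$ we have $R_{10}(x,y)=P_0(x,y)+P_{10}(x,y)$. The term $P_0$ depends only on $y$, and $P_{10}$ depends only on $x$. Hence $e^{iR_{10}(x,y)}=e^{iP_{10}(x)}e^{iP_0(y)}$. The first factor has modulus one and does not affect the level sets. The second factor can be absorbed into $f$ without changing $\|f\|_{L^1_w}$. So $T_{10}$ is controlled by Proposition 2. The same observation shows that $A(j,0)$ follows from $A(j-1,N)$, since $P_{j0}$ depends only on $x$. Also $R_{j-1,N}=R_{j,0}-P_{j0}$. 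It therefore suffices to prove that $A(j,k-1)$ implies $A(j,k)$ for $k\ge1$.

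\emph{Inductive step.} Let $k\ge 1$ and assume $A(j,k-1)$. Write $R_{jk}=R_{j,k-1}+P_{jk}$. Both the weighted weak $(1,1)$ hypothesis and the class of weights $C_1(w^u)\le\eta$ are invariant under dilations $x\mapsto \delta x$. The conditions $B_r,C_r\le\rho$ are also invariant under the corresponding rescaling of $K$. By dilation we may therefore normalize so that $\max_{|\alpha|=j,|\beta|=k}|a_{\alpha\beta}|=1$; if all these coefficients vanish there is nothing to prove. We then split the operator as
\[T_{jk}f=\pv\!\int_{|x-y|\le1}e^{iR_{jk}(x,y)}K(x-y)f(y)\,dy+T_\infty f.\]
Here $T_\infty$ is exactly the operator of Proposition 1, with $M=j$ and $L=k$ under the normalization. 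Proposition 1 gives the weighted weak $(1,1)$ bound for $T_\infty$ with constants depending only on $\eta,\rho,j,N,r,n$.

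\emph{Local part.} Decompose $\mathbf R^n$ into unit cubes $Q$ and write $f=\sum_Q f\chi_Q$. For $y\in Q$ with center $x_Q$, and $|x-y|\le1$, expand $P_{jk}(x,y)$ around $(x_Q,x_Q)$. The terms $x^\alpha y^\beta$ with $|\alpha|=j$, $|\beta|=k$ can be rewritten as $x^\alpha x^\beta$ plus terms $x^\alpha y^{\beta'}$ with $|\beta'|<k$. The remaining parts are polynomials in $(x-x_Q)$ and $(y-x_Q)$, weighted by coefficients of size at most $1$. In total,
\[e^{iR_{jk}(x,y)}=e^{i\phi_Q(x)}e^{i\tilde R_Q(x,y)}+O(|x-y|),\]
where $\phi_Q$ depends only on $x$ and $\tilde R_Q$ has the form $(2.1)$ with index $(j,k-1)$. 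The constant in $O(|x-y|)$ is uniform in $Q$ because the coefficients of $P_{jk}$ are bounded by $1$ and $|x-x_Q|,|y-x_Q|\le C$.

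The main term is therefore handled by $A(j,k-1)$ applied to $f\chi_Q$, after subtracting the truncation $\int_{|x-y|>1}$. That truncation is in turn controlled by Proposition 1 with the same constants. Since the localized outputs live in $3Q$, which have bounded overlap, the weak-type bounds add up. The error term is dominated by
\[\int_{|x-y|\le1}|x-y|\,|K(x-y)||f(y)|\,dy.\]
We estimate this in $L^1_w$ by decomposing dyadically, $2^{-m-1}<|x-y|\le 2^{-m}$. On each dyadic shell, H\"older's inequality with $C_r$ bounds the contribution by $2^{-m}$ times an $L^u$-average of $w$ on a ball of radius $2^{-m}$. The condition $w^u\in A_1$ bounds these averages by $C\,w(y)$. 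Summing in $m$ gives a strong $(1,1)$ bound.

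\emph{Main obstacle.} The substance lies in Proposition 1, whose proof (Section 4) requires the Calder\'on--Zygmund decomposition, Christ's technique for rough kernels, and the key unweighted estimate Lemma 8 interpolated via Vargas' change of measure. We take Proposition 1 as given here. Within the present proof, the delicate point is the uniformity of constants in the local part. We must check that the rewritten polynomial $\tilde R_Q$ has coefficients that do not matter, since $A(j,k-1)$ is uniform in the coefficients of the polynomial. We must also check that the truncations at scale $1$ interact correctly with $w$, which uses the doubling property of $A_1$ weights.
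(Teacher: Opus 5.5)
Most of your proof follows the paper: Proposition 2 for $A(1,0)$, the trivial step $A(j-1,N)\Rightarrow A(j,0)$, and dilation to normalize the new top coefficients. The part $|x-y|>1$ goes to Proposition 1, and you correctly identify $M=j$, $L=k$. The local part replaces the top term, after centering, by a one-variable term, with an error $\int_{|x-y|\le1}|x-y||K(x-y)||f(y)|\,dy$ handled by H\"older and $w^u\in A_1$. Two of your choices differ from the paper's but are harmless. You localize $f$ to unit cubes and sum by bounded overlap, where the paper localizes $x$ to $B(h,1)$ and integrates in $h$. You also put the one-variable phase on the $x$ side instead of absorbing it into $f$.

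\textbf{The gap.} You claim that the truncation
$$\tilde T_{Q,\infty}(f\chi_Q)(x)=\int_{|x-y|>1}e^{i\tilde R_Q(x,y)}K(x-y)f(y)\chi_Q(y)\,dy$$
is controlled by Proposition 1 with the same constants. That is not justified. Proposition 1 is stated and proved only for polynomials normalized by $L\ge1$ and $\max_{|\alpha|=M,|\beta|=L}|a_{\alpha\beta}|=1$, and $\tilde R_Q$ is not normalized. Re-expanding $x^\alpha y^\beta$ about $x_Q$ produces terms such as $\beta_i(x_Q)_i\,a_{\alpha\beta}\,x^\alpha y^{\beta-e_i}$. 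So the top coefficients of $\tilde R_Q$ are linear in $x_Q$ and unbounded over $Q$. If you normalize by a dilation, the truncation radius moves away from $1$. The leftover intermediate-scale pieces are then not covered by Proposition 1 uniformly in $Q$. Your remark that the coefficients of $\tilde R_Q$ do not matter is true for $A(j,k-1)$, but not for Proposition 1. At exactly this point the paper uses its Lemma 1 (the Ricci--Stein truncation lemma). That lemma passes from the weak-type bound for the full operator, given by the induction hypothesis, to its local truncation $U$.

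\textbf{How to repair it.} In your setting the fix is elementary; it is the $f_2$ piece in the proof of Lemma 1. The main term applied to $f\chi_Q$ vanishes outside $3Q$, so you only need the identity (main term) $=\tilde T_Q(f\chi_Q)-\tilde T_{Q,\infty}(f\chi_Q)$ for $x\in3Q$. There, $y\in Q$ forces $1<|x-y|\le2\sqrt n$, so
$$|\tilde T_{Q,\infty}(f\chi_Q)(x)|\le\int_{1<|x-y|\le2\sqrt n}|K(x-y)||f(y)|\chi_Q(y)\,dy.$$
Its $L^1_w$ norm is at most $c\,C_r\,C_1(w^u)^{1/u}\|f\chi_Q\|_{L^1_w}$, by the same H\"older and $w^u\in A_1$ computation you already use for the error term. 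Make this replacement for the appeal to Proposition 1. Also use a smooth partition of unity instead of $\chi_Q$, since $A(j,k-1)$ is formulated for Schwartz functions. With these changes your argument closes.

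\textbf{A smaller correction.} Your sentence rewriting $x^\alpha y^\beta$ as $x^\alpha x^\beta$ plus terms of lower $y$-degree is not an identity. What you actually use is $(x-x_Q)^\alpha(y-x_Q)^\beta=(x-x_Q)^{\alpha+\beta}+O(|x-y|)$ for $y\in Q$ and $|x-y|\le1$. The remaining terms of the expansion are of type $(j,k-1)$.
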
  

Then, Theorem 1 follows from Proposition $A(M,N)$.  
We shall prove it by  double induction.  We first note that $A(1,0)$ follows 
from the boundedness of the operator $A$.  

Next,  we observe that if $M \geq 2$ and if $A(j,N)$ ($1 \leq j \leq M - 1$) 
is true,  so is $A(j + 1,0)$ since 
$$R_{j+1,0}(x,y) =  R_{jN}(x,y) + \sum_{|\alpha| = j+1}a_{\alpha 0}x^{\alpha} $$  and hence 
$|T_{j+1,0}(f)(x)| =  |T_{jN}(f)(x)| $.   
Thus, to complete the induction  starting from $A(1,0)$ and arriving at $A(M,N)$, it is sufficient to prove $A(j, k+1)$ assuming $A(j,k)$ ($0 \leq k < N$, 
$1 \leq j \leq M$).  
To achieve this, put $R = R_{j,k+1}$, $R_0 = R_{jk}$, $T_{j,k+1} = S$. We note 
that
$$R(x,y) = R_0(x,y) + \sum\limits_{\substack{|\alpha|=j \\ |\beta| = k+1}} 
 a_{\alpha \beta}x^{\alpha}y^{\beta}.$$   
We have only to deal with  the case $C_{jk} = \max_{|\alpha|=j, |\beta| = k+1} 
|a_{\alpha \beta}| \neq 0$. Then, by a suitable dilation we may assume $C_{jk} = 1$.  This can be seen as follows.  We first note that, for $a > 0$, 
$$S(f)(ax) =  \pv \int e^{iR(ax,ay)}K_a(x - y)f(ay)\, dy , $$  
where $K_a(x) = a^nK(ax)$.  Assume the boundedness of $S$ for the case $C_{jk} = 1$. Then, choosing $a$ to satisfy $a^{j+k+1}C_{jk} = 1$, and using the dilation invariance of 
both the class $A_1$ and the class of the kernels considered in Theorem 1, 
we get
\begin{align*}  
w\left(\left\{x \in \bold R^n : \left|S(f)(x)\right| > \lambda \right\}\right) &= w_a\left(\left\{x \in \bold R^n : \left|S(f)(ax)\right| > 
\lambda \right\}\right) 
\\
&\leq c\lambda^{-1} \int |f(ax)|a^nw(ax)\, dx 
\\
&= c\lambda^{-1}\|f\|_{L^1_w}.  
\end{align*}   
\par 
 We split the kernel $K$ as $K = K_0 + K_{\infty}$, where 
$K_0(x) = K(x)$ if $|x| \leq 1$ and $ K_{\infty}(x) = K(x)$ if $|x| > 1$. Assuming $C_{jk} = 1$,  we consider the corresponding splitting 
$S = S_0 + S_{\infty}$: 
$$S_{0}(f)(x) =  \pv \int  e^{iR(x,y)}K_0(x - y)f(y)\, dy, $$
$$S_{\infty}(f)(x) =  \int e^{iR(x,y)}K_{\infty}(x - y)f(y)\, dy. $$
\par 
In the next section, we shall prove 
\begin{equation}\label{tag 2.2} 
\sup_{\lambda > 0} \lambda w\left(\left\{x \in \bold R^n : 
\left|S_{0}(f)(x)\right| > \lambda \right\}\right) \leq c\|f\|_{L^1_w}, 
\end{equation}
while by Proposition 1  we have 
\begin{equation} \label{tag 2.3} 
\sup_{\lambda > 0} \lambda w\left(\left\{x \in \bold R^n : 
\left|S_{\infty}(f)(x)\right| > \lambda \right\}\right) \leq c\|f\|_{L^1_w}.  
\end{equation} 
Combining (2.2) and (2.3), we shall complete the proof of $A(j,k+1)$, which will finish the proof of Theorem 1.

\section{Estimate for $S_0$} 

In this section, we shall prove, under the assumption made in \S 2, that 
if  $C_1(w) \leq \eta$, $B_r$, $C_r\leq \rho$   ($\eta$, $\rho > 0$), 
then  $S_0$ is bounded from $L^1_w$ to $L^{1,\infty}_w$ with the operator norm bounded by a 
constant depending only on $j$, $N$, $\eta$, $\rho$,  $r$ and $n$ ((2.2)). 
\par 
First, we shall prove 
\begin{equation} \label{tag 3.1} 
 w\left(\left\{x \in B(0,1) : \left|S_{0}(f)(x)\right| > \lambda \right\}\right) \leq c\lambda^{-1}\int_{|y|<2}|f(y)|w(y)\, dy , 
\end{equation} 
where $B(x,r)$ denotes the closed ball with center $x$ and radius $r > 0$.

\begin{lemma} Let $w, w^u (1\leq u<\infty) \in A_1$.   
Let $T$ be an operator of the form$:$ 
$$T(f)(x) = \pv \int_{\text{$\bold R^n$}} K(x,y)f(y)\, dy 
= \lim_{\epsilon \to 0} \int_{|x-y|>\epsilon} K(x,y)f(y)\, dy $$
for $f \in \frak S(\bold R^n)$. 
Let $1/r+1/u=1$ and consider a non-negative function $L$ on 
$\bold R^n\setminus \{0\}$ 
satisfying $J_r<\infty$, where 
$$J_r=\sup_{R>0}\left(R^{-n}\int\limits_{R\leq 
|x|\leq 2R}\left(R^{n}L(x)\right)^r\, dx\right)^{1/r}   $$ 
for $r<\infty$ and $J_\infty$ can be defined by the usual modification. 
Suppose the kernel $K$ satisfies $ |K(x,y)| \leq  L(x - y)$.   
For $\epsilon > 0$, put 
$$T_{\epsilon}(f)(x) = \pv \int_{|x-y|<\epsilon} K(x,y)f(y)\, dy. $$
Suppose 
$$\sup_{\lambda > 0} \lambda w\left(\left\{x \in \bold R^n : 
\left|T(f)(x)\right| > \lambda \right\}\right) \leq c_w\|f\|_{L^1_w} . $$
 Then
$$\sup_{\lambda > 0} \lambda w\left(\left\{x \in \bold R^n : 
\left|T_{\epsilon}(f)(x)\right| > \lambda \right\}\right) \leq 
c(c_w + J_rC_1(w^u)^{1/u}) \|f\|_{L^1_w} . $$
\end{lemma}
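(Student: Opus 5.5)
The plan is to write $T_\epsilon(f) = T(f) - U_\epsilon(f)$, where
$$U_\epsilon(f)(x) = \int_{|x-y|\geq\epsilon} K(x,y)f(y)\,dy .$$
The weak type bound for $T$ is given by hypothesis. It therefore suffices to show that $U_\epsilon$ is bounded from $L^1_w$ to $L^{1,\infty}_w$ with norm at most $cJ_rC_1(w^u)^{1/u}$. We will in fact prove that $|U_\epsilon(f)|$ is pointwise dominated by a maximal-type operator that is of weak type $(1,1)$ with respect to $w$. Since $|K(x,y)|\le L(x-y)$, we have
$$|U_\epsilon f(x)| \le \sum_{m\geq 0}\int_{2^m\epsilon\le |x-y|<2^{m+1}\epsilon} L(x-y)|f(y)|\,dy .$$
This sum is not obviously controlled by the maximal function, since the annuli pieces do not decay. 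So the first step is to reduce matters: the difference between $T_\epsilon$ and a single truncation $T_\epsilon - T$ restricted to one annulus is enough, if we localize $f$. Concretely, by dilation invariance of the hypotheses (both $J_r$ and $C_1$ are dilation invariant) we may take $\epsilon=1$. We then decompose $\mathbf R^n$ into cubes $Q$ of side $1/(2\sqrt n)$ and write $f=\sum_Q f\chi_Q$.

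For $x\in Q$ one has $T_1(f)(x)=T_1(f\chi_{\tilde Q})(x)$, where $\tilde Q$ is the concentric enlargement of side about $3$. Moreover $T_1(f\chi_{\tilde Q})(x) = T(f\chi_{\tilde Q})(x) - \int_{1\le|x-y|} K(x,y)f\chi_{\tilde Q}(y)\,dy$, and the last integral is supported in $1\le |x-y|\le C_n$, i.e. only boundedly many dyadic annuli. On each such annulus, Hölder's inequality with exponents $r,u$ gives
$$\int_{R\le|x-y|\le 2R} L(x-y)|g(y)|\,dy \le J_r R^{-n/u}\cdot R^{n}\cdot R^{-n}\Bigl(\int_{|x-y|\le 2R}|g|^u\Bigr)^{1/u}\lesssim J_r\, M(|g|^u)(x)^{1/u}.$$
Hence $|T_1 f(x)|\le |T(f\chi_{\tilde Q})(x)| + cJ_r M(|f\chi_{\tilde Q}|^u)(x)^{1/u}$ for $x\in Q$. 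This Hölder route is slightly wasteful; an alternative is to apply Hölder to $L$ against the weight instead, as follows.

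The key weighted estimate is the bound on $E(g)(x)=\int_{1\le|x-y|\le C} L(x-y)|g(y)|\,dy$ from $L^1_w$ to $L^{1}_w$, which is even strong type. By Fubini,
$$\int E(g)w\,dx=\int|g(y)|\int_{1\le|x-y|\le C}L(x-y)w(x)\,dx\,dy,$$
and Hölder gives
$$\int_{1\le|x-y|\le C} Lw\,dx\le cJ_r\Bigl(\int_{|x-y|\le C}w^u\Bigr)^{1/u}\le cJ_r\bigl(M(w^u)(y)\bigr)^{1/u}\le cJ_rC_1(w^u)^{1/u}w(y).$$
This yields $\|E g\|_{L^1_w}\le cJ_rC_1(w^u)^{1/u}\|g\|_{L^1_w}$, which is exactly the shape of the claimed constant. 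In fact this makes the cube localization unnecessary: without localizing we cannot sum over infinitely many annuli, but the localization is still needed only for $T_1 - T$ to have a compact range.

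So the steps are as follows. First, fix $x\in Q$ and write $T_1f = T(f\chi_{\tilde Q}) - E_Q$ with $|E_Q|\le E(f\chi_{\tilde Q})$. Second, estimate
$$w(\{|T_1f|>\lambda\})\le\sum_Q w(\{x\in Q: |T(f\chi_{\tilde Q})|>\lambda/2\}) + \sum_Q w(\{x\in Q:E(f\chi_{\tilde Q})>\lambda/2\}).$$
Third, bound the first sum by $c_w\lambda^{-1}\sum_Q\|f\chi_{\tilde Q}\|_{L^1_w}$ and the second by the $L^1_w$ bound above. Finally, use the bounded overlap of the $\tilde Q$ to sum. The main obstacle, namely controlling the tail annuli uniformly, is handled by this localization. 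The step requiring care is the weighted Fubini–Hölder bound, which uses exactly the hypothesis $w^u\in A_1$. Note that $w\in A_1$ follows from it, and the hypothesis $w\in A_1$ itself is not needed beyond the assumed bound for $T$.
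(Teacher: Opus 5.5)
Your final argument is correct and is essentially the paper's proof. You reduce to $\epsilon=1$ and localize to small cubes $Q$, writing $T_1 f = T(f\chi_{\tilde Q}) - \int_{1\le|x-y|\le C_n}K(x,y)f\chi_{\tilde Q}(y)\,dy$ on $Q$. You then control the non-singular annular term in $L^1_w$ by Fubini, H\"older against $J_r$, and $M(w^u)\le C_1(w^u)w^u$, and sum using bounded overlap. The paper instead splits $f=f_1+f_2+f_3$ around balls $B(h,\epsilon/4)$, so that $T_\epsilon f_1=Tf_1$, $T_\epsilon f_2$ is a non-singular annular integral handled by the same Chebyshev--H\"older--$A_1$ bound, and $T_\epsilon f_3=0$. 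It then integrates in $h$ rather than summing over a grid.

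Two things to tidy. First, the hypothesis on $T$ is stated only for Schwartz functions, so replace $\chi_{\tilde Q}$ by a smooth cutoff equal to $1$ on the $1$-neighbourhood of $Q$ and supported in $\tilde Q$; the paper likewise takes $f_i$ Schwartz with $|f_i|\le c|f|$. Second, delete the opening claim that $U_\epsilon$ is itself of weak type, which is false, as you noticed.
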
  

\begin{proof} The proof is similar to that of {\sc Lemma} in \cite[p.\ 187]{5}.   We shall prove 
\begin{multline}  \label{tag 3.2} 
 w\left(\left\{x \in B(h,\epsilon /4) : \left|T_{\epsilon}(f)(x)\right| > \lambda \right\}\right) 
 \\
 \leq c(c_w + J_rC_1(w^u)^{1/u})\lambda^{-1}\int_{|y-h|<6\epsilon/4}
 |f(y)|w(y)\, dy  
 \end{multline} 
uniformly in $h \in \bold R^n$. Integrating both sides of the inequality in 
(3.2) with respect to $h$, we get the conclusion of Lemma 1.

Split $f$ into 3 pieces: $f = f_1 + f_2 + f_3$, where $f_i \in 
\frak S(\bold R^n)$, $|f_i|\leq c|f|$ ($i=1, 2, 3$); 
$\supp(f_1) \subset B(h,\epsilon/2)$, 
$\supp(f_2) \subset B(h,11\epsilon/8)\setminus B(h, 3\epsilon/8)$, 
$\supp(f_3) \subset \{x : |x-h|\geq 5\epsilon/4\}$. 
Note that if $|x - h| \leq \epsilon /4$, then $T_{\epsilon}(f_1)(x) = T(f_1)(x)$; since  $|y - h| \leq \epsilon /2$ and $|x - h| \leq \epsilon /4$ imply $|x - y| < \epsilon$.  So by the assumption on $T$, we have  
$$ 
 w\left(\left\{x \in B(h,\epsilon /4) : \left|T_{\epsilon}(f_1)(x)\right| 
 > \lambda \right\}\right) 
\leq c_w\lambda^{-1}\int_{|y-h|<6\epsilon/4}|f(y)|w(y)\, dy. 
$$
\par 
Next, by Chebyshev's inequality, H\"{o}lder's inequality and the fact 
$w^u\in A_1$ we easily see that  
$$ 
 w\left(\left\{x \in B(h,\epsilon /4) : \left|T_{\epsilon}(f_2)(x)\right| > \lambda \right\}\right) 
\leq cJ_rC_1(w^u)^{1/u}\lambda^{-1}\int_{|y-h|<6\epsilon/4}|f(y)|w(y)\, dy.  
$$

Finally, if $|x - h| \leq \epsilon /4$ and $|y - h| \geq 5\epsilon /4$, then  $|x - y| \geq \epsilon$, and so $T_{\epsilon}(f_3)(x) = 0$.   
Combining these results, we get (3.2). This completes the proof of Lemma 1. 
\end{proof} 
\par 
Now we return to the proof of (3.1). If $ |x| \leq 1 $ and $|y| \leq 2$, then 
$$\left|\exp \left(iR(x,y)\right) - \exp \left(i\left(R_0(x,y) +  
\sum\limits_{\substack{|\alpha| = j \\ |\beta| = k+1}}  a_{\alpha \beta}y^{\alpha + \beta}\right)\right)\right| \leq c|x - y|, $$
where $c$ depends only on $k, j $ and $n$. 
\par 
Hence, if $|x| \leq 1$, 
$$
|S_{0}(f)(x)| \leq \left|U\left(\exp \left(i\sum\limits_{\substack{|\alpha| = j \\ |\beta| = k+1}} a_{\alpha \beta}y^{\alpha + \beta}\right)f(y)\right)(x)
\right| + cI(f)(x), 
$$
where
$$ 
U(f)(x) =  \pv \int  e^{iR_0(x,y)}K_0(x - y)f(y)\, dy, \ 
 I(f)(x) = \int_{|x-y|<1}|x - y|L(x-y)|f(y)|\, dy. 
$$ 
\par 
Note that  $U(f)(x) = U(f\chi_{B(0,2)})(x)$,  $I(f)(x) = I(f\chi_{B(0,2)})(x)$   if $|x| < 1$.  By the induction hypothesis $A(j,k)$ and Lemma 1, we see that $U$ is bounded from $L^1_w$ to $L^{1,\infty}_w$.  On the other hand,  it is 
easy to see that 
$$ 
\int_{|x-y|<1}|x - y|L(x-y)w(x)\, dx  \leq \sum_{j \leq 0}2^j
\int_{2^{j-1}\leq |x-y|\leq 2^j} L(x-y)w(x)\, dx  \leq cJ_rM_u(w)(y),   
$$
where $M_u(w)=M(w^u)^{1/u}$.  Thus, by Chebyshev's inequality and the fact 
$w^u\in A_1$ we have
$$
w(\{x \in B(0,1) : I(f)(x) > \lambda \}) 
\leq c J_rC_1(w^u)^{1/u}\lambda^{-1}\int_{|y|<2}|f(y)|w(y)\, dy. 
$$ 
Combining these results, we get (3.1).
\par 
Similarly we can prove 
\begin{equation}\label{tag 3.3}  
 w\left(\left\{x \in B(h,1) : \left|S_{0}(f)(x)\right| > \lambda \right\}
\right) \leq c\lambda^{-1}\int_{|y-h|<2}|f(y)|w(y)\, dy, 
\end{equation}
where $c$ is independent of $h \in \bold R^n$.  
To see this, we first note that 
$$S_{0}(f)(x+h) =  \pv \int  e^{iR(x+h,y+h)}K_0(x - y)f(y + h)\, dy $$
and 
$$R(x + h,y + h) =R_1(x,y,h) +  \sum\limits_{\substack{|\alpha| = j \\ |\beta| = k+1}} a_{\alpha \beta}x^{\alpha}y^{\beta}. $$

We can apply the induction hypothesis $A(j,k)$ to the operator 
$$ \pv \int  e^{iR_1(x,y,h)}K(x - y)f(y)\, dy $$
to get its boundedness from $L^1_w$ to $L^{1,\infty}_w$.  
Thus, by the same argument that leads to (3.1) we get
\begin{align*} 
 w\left(\left\{x \in B(h,1) : \left|S_{0}(f)(x)\right| > \lambda \right\}\right)&= \tau_hw\left(\left\{x \in B(0,1) : \left|S_{0}(f)(x + h)\right| > \lambda \right\}\right) 
\\
&\leq c\lambda^{-1}\int_{|y|<2}|f(y + h)|w(y + h)\, dy 
\\
&\leq c\lambda^{-1}\int_{|y-h|<2}|f(y)|w(y)\, dy , 
\end{align*}
where $\tau_hw(x) = w(x + h)$, and we have used the translation invariance of the class $A_1$. 
Integrating both sides of the inequality (3.3) with respect to $h$, 
we get (2.2).

\section{Outline of proof of Proposition $1$}

Let $f\in \frak S(\Bbb R^n)$. By Calder\'{o}n-Zygmund 
decomposition at height $\lambda > 0 $  we have  a collection $\{Q\}$ of 
non-overlapping closed dyadic cubes and functions $ g, b$ such that 

\begin{gather}  
 f = g + b  ;  
 \\ 
\lambda \leq |Q|^{-1}\int_Q|f| \leq c\lambda  ; 
\\ 
v(\cup Q ) \leq c_v\|f\|_{L^1_v}/\lambda \qquad \text{for all $v \in A_1$} ; 
\\ 
\|g\|_{\infty} \leq c\lambda, \qquad  \|g\|_{L^1_v} \leq c_v\|f\|_{L^1_v}  
\qquad \text{for all $v \in A_1$} ; 
\\ 
b = \sum_Q b_Q, \quad \supp (b_Q) \subset Q, \quad 
\|b_Q\|_{L^1} \leq c\lambda |Q|. 
\end{gather}    
\par
Let a polynomial $P$ be as in Proposition 1. We assume as we may that 
$M \geq 1$ 
as in the outline of the proof of Theorem 1 in \S 2.  
We write $P$ as in (1.5). 
Then, let   
$q(y) = \sum_{|\beta|\leq L}c_{\beta}y^{\beta}$ be the coefficient of $x_1^M$. 
 By a rotation of coordinates and a normalization, to prove Proposition 1 we may assume  $\max_{|\beta|= L}|c_{\beta}| = 1$ (see \cite[p.\ 151]{1} and 
Sublemma 2 in \S 6).  

We take a non-negative $\varphi \in C^{\infty}_0(\bold R^n)$ such that 
$$\text{supp}(\varphi) \subset \{1/2 \leq |x| \leq 2\}, \qquad  
\sum_{j = 0}^{\infty} \varphi (2^{-j}x) = 1 \quad \text{if} \quad  |x| \geq 1.$$

Put $K_j(x,y) = \varphi (2^{-j}(x - y))K_{\infty}(x,y)$, where $K_{\infty}(x,y) = e^{iP(x,y)}K_{\infty}(x - y)$ ($K_{\infty}(x)$ is as in \S 2) and decompose $K_{\infty}(x,y)$ as $K_{\infty}(x,y) = \sum_{j = 0}^{\infty}K_j(x,y)$.

Define 
$$V_j(f)(x) = \int K_j(x,y)f(y)\, dy \qquad \text{for} \qquad j \geq 0 $$
and put
$$V(f)(x) = \sum_{j=1}^{\infty}V_j(f)(x).$$
Then $T_{\infty} = V_0 + V$.  We have only to deal with  $V$ since  we easily see that $V_0$ is bounded on $L_w^1$ ($w^u \in A_1$). 

We set (see  \cite{3, 4}) 
$$B_i = \sum_{|Q|=2^{in}}b_Q \quad (i \geq 1), \qquad B_0 = \sum_{|Q|\leq 1}b_Q .$$

Put $\mathcal U = \cup \tilde{Q}$, where $\tilde{Q}$ denotes the cube with the same center as $Q$ and with sidelength $100$ times that of $Q$. 
(Throughout this note we consider the cubes with sides parallel to the 
 coordinate axes.)       
\par
When $x \in \bold R^n\setminus \mathcal U$, we observe that  

\begin{multline}   \label{tag 4.6} 
V(b)(x) = V\left(\sum_{i\geq 0}B_i\right)(x) 
\\  
= \sum_{i\geq 0}\sum_{j\geq 1}\int K_j(x,y)B_i(y)\, dy 
= \sum_{i\geq 0}\sum_{j\geq i+1}\int K_j(x,y)B_i(y)\, dy  
\\  
= \sum_{s\geq 1}\sum_{j\geq s}\int K_j(x,y)B_{j-s}(y)\, dy 
= \sum_{s\geq 1}\sum_{j\geq s}V_j(B_{j-s})(x). 
\end{multline}   
\par 
To prove Proposition 1 we need the following results (Lemmas 2, 3 and 4). 

\begin{lemma}  Suppose $w \in A_1$.  
Let $\{L_j\}_{j\geq 1}$ be a family of kernels satisfying 
$$
\supp(L_j) \subset \{2^{j-6}\leq |x|\leq 2^{j+6}\}, 
\quad 
|L_j(x)|\leq c_1|x|^{-n},
\quad 
|\nabla L_j(x)|\leq c_2|x|^{-n-1}.
$$
Let
$$G_j(f)(x) = 
\int_{\text{${\bold R}^n$}} e^{iP(x,y)}L_j(x - y)f(y)\, dy. $$ 
Put 
$$E_\lambda^s=\left\{x\in \bold R^n : 
\left|\sum_{j\geq s}G_j(B_{j-s})(x)\right|>\lambda\right\}.$$
Then 
there exists  $\epsilon, \eta > 0$ such that, for any positive integer $s$, 
$$w\left(E_{\lambda c_\eta2^{-\eta s}}^s\right) \leq 
c2^{-\epsilon s}\lambda^{-1} 
\|f\|_{L^1_w},   $$
where $c_\eta$ is a positive constant satisfying $\sum_{s=1}^\infty
c_\eta2^{-\eta s/2}=1$.   
\end{lemma}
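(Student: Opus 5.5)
The plan is to follow Vargas's interpolation with change of measure, combining an unweighted $L^2$ decay estimate (the key estimate the introduction attributes to Lemma 8, in turn relying on the polynomial geometry of Lemma 6) with a trivial weighted $L^1$ bound that loses only a factor growing like $s$.

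\emph{Weighted $L^1$ bound.} For each $s$ we use the pointwise bound $|G_j(B_{j-s})(x)|\le c_1\int |L_j(x-y)||B_{j-s}(y)|\,dy$. Integrating against $w$ gives
$$\int\Big|\sum_{j\ge s}G_j(B_{j-s})\Big|w \le c\sum_j\int |B_{j-s}(y)|\,Mw(y)\,dy\le cC_1(w)\|b\|_{L^1_w}.$$
Here we have used the size of $L_j$ on an annulus of radius $2^j$, together with the fact that averages of $w$ over balls of radius $2^{j+6}$ centred near $y$ are bounded by $Mw(y)$. Since $\|b_Q\|_{L^1_w}$ is controlled by $\lambda|Q|\inf_Q w$, hence by $\|f\|_{L^1_w}$ by (4.2)--(4.5), this yields
$$w(E^s_\mu)\le c\mu^{-1}\|f\|_{L^1_w}.$$
This bound has no decay in $s$. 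For the same reason we may pass to $w\in A_1$ with $w^{1+\delta}\in A_1$ for some $\delta>0$ (reverse H\"older), and the estimate persists for $w^{1+\delta}$.

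\emph{Unweighted decay.} The main obstacle is the $L^2$ estimate
$$\Big\|\sum_{j\ge s}G_j(B_{j-s})\Big\|_2^2\le c2^{-\epsilon_0 s}\lambda\|f\|_1,$$
which I expect to be exactly the content of Lemma 8. To prove it, expand the square into diagonal and off-diagonal terms $\langle G_j B_{j-s},G_k B_{k-s}\rangle$. Off-diagonal terms with $|j-k|$ large are handled by near-orthogonality coming from the supports. For the terms with $j$ close to $k$, write $G_j^*G_j$ as an integral operator with kernel
$$\int e^{i(P(x,y)-P(x,z))}L_j(x-y)\overline{L_j(x-z)}\,dx.$$
Using the normalization of the top coefficient (Sublemma 2 and Lemma 6) together with van der Corput--type sublevel set estimates, this kernel has size at most $2^{-jn}\,\min\bigl(1,(2^{j}|y-z|)^{-\gamma}\bigr)$ except on small exceptional sets. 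Testing it against $B_{j-s}$, whose cubes have side $2^{j-s}$ and which is bounded in $L^1$ by $\lambda$ times the measure of the cubes, gains $2^{-\epsilon_0 s}$. Chebyshev's inequality then gives
$$|E^s_{\mu}|\le c\mu^{-2}2^{-\epsilon_0 s}\lambda\|f\|_1,$$
and taking $\mu=\lambda c_\eta 2^{-\eta s}$ with $\eta$ small still leaves decay. This unweighted step is the hard part; the weighted steps are soft.

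\emph{Change of measure.} Finally, following Vargas, we interpolate between the two estimates. Hölder's inequality on the set $E=E^s_{\lambda c_\eta2^{-\eta s}}$ gives
$$w(E)\le |E|^{\delta/(1+\delta)}\,w^{1+\delta}(E)^{1/(1+\delta)}.$$
The exceptional set $\mathcal U$ need not be treated here, because on its complement $V(b)$ has the form (4.6), and $\mathcal U$ is handled separately through (4.3). To keep the powers of $\lambda$ and of the norms consistent, we localize: the estimates are applied cube by cube, with $w$ comparable to a constant on $\tilde Q$ up to $C_1(w)$. Then $|E|$ and $w^{1+\delta}(E)$ are controlled respectively by $2^{-\epsilon_0 s}\lambda^{-1}\|f\|_1$ and $\lambda^{-1}\|f\|_{L^1_{w^{1+\delta}}}$, up to the factor $2^{\eta s}$. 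Choosing $\eta$ small relative to $\epsilon_0\delta/(1+\delta)$ yields the claimed $2^{-\epsilon s}\lambda^{-1}\|f\|_{L^1_w}$.
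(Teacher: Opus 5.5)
You have a genuine gap in the change-of-measure step. Combining the two global bounds through $w(E)\le|E|^{\delta/(1+\delta)}w^{1+\delta}(E)^{1/(1+\delta)}$ gives at best $c\lambda^{-1}2^{-\epsilon' s}\|f\|_{L^1}^{\delta/(1+\delta)}\|f\|_{L^1_{w^{1+\delta}}}^{1/(1+\delta)}$. But H\"older's inequality gives $\|f\|_{L^1_w}\le\|f\|_{L^1}^{\delta/(1+\delta)}\|f\|_{L^1_{w^{1+\delta}}}^{1/(1+\delta)}$, which is the wrong direction, and the ratio is unbounded. For instance, take $w=|x|^{-a}$ with $0<a(1+\delta)<n$ and $f=\epsilon^{a-n}\chi_{B(0,\epsilon)}+\chi_{B(10e_1,1)}$. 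Then $\|f\|_{L^1}\approx\|f\|_{L^1_w}\approx 1$, while $\|f\|_{L^1_{w^{1+\delta}}}\approx\epsilon^{-a\delta}$.

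Your repair (``apply the estimates cube by cube, with $w$ comparable to a constant on $\tilde Q$'') does not work, for three reasons. First, $A_1$ bounds the average of $w$ over $Q$ by $\inf_Q w$, but $w$ need not be bounded on $Q$. Second, $E^s_\mu$ is not a union of pieces attached to single cubes: $G_j(b_Q)$ with $\ell(Q)=2^{j-s}$ spreads over a ball of radius $2^{j+6}$. Third, the $2^{-s}$ gain in the $L^2$ bound is collective. In Lemma 7 it comes from summing $\mathcal B_{i-s}$ over all cubes of side $2^{i-s}$ meeting $B(x,2^{j+2})$, so it cannot be used one cube at a time. Grouping cubes by dyadic levels of $\inf_Q M(w)$ instead would force you to split the threshold among infinitely many groups.

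The missing idea is Vargas's device, as in Lemma 10. Fix a weight $v$ and $t>0$, and split the Calder\'on--Zygmund cubes according to whether $\inf_QM(v)<t2^{-s}$.
\begin{itemize}
\item On those cubes, use the weighted $L^1$ bound in the cube-wise form $\lambda\sum_Q|Q|\inf_QM(v)$. This is Lemma 9, via $\int|L_j(x-y)|v(x)\,dx\lesssim\inf_QM(v)$ for $y\in Q$. Your claim $\|b_Q\|_{L^1_w}\lesssim\lambda|Q|\inf_Q w$ is false as stated.
\item On the remaining cubes, use $\min(v,t)\le t$ together with the unweighted bound $c2^{-s}\sum_Q|Q|$ from Lemma 8.
\end{itemize}
This gives
$$\int_{E^s_\lambda}\min(v,t)\,dx\le c\sum_Q|Q|\min\bigl(t2^{-s},\inf_QM(v)\bigr).$$
Integrating against $t^{-\theta}\,dt/t$ yields $\int_{E^s_\lambda}v^{1-\theta}\le c\lambda^{-1}2^{-\theta s}\int|f|M(v)^{1-\theta}$. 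With $v=w^{1+\delta}$ and $1-\theta=(1+\delta)^{-1}$, this becomes $w(E^s_\lambda)\le c\lambda^{-1}2^{-s\delta/(1+\delta)}\|f\|_{L^1_w}$. The threshold $\lambda c_\eta 2^{-\eta s}$ is then handled by replacing $L_j$ with $c2^{\eta s}L_j$.

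The $t$-dependent split matches the weights cube by cube on the $f$-side while keeping both estimates global on the $E$-side. A single H\"older step cannot do this.

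Your sketch of the $L^2$ bound is also inaccurate. There is no near-orthogonality between $G_j$ and $G_k$ coming from supports. The paper instead bounds all pairs $i\le j$ via $|H_{ji}(x,y)|\le c2^{-jn}\min\bigl(1,2^{-i}|q(x)-q(y)|^{-1/M}\bigr)$, obtained from van der Corput in $z_1$. It combines this with the sublevel-set estimate of Lemma 6, and the sum $\sum_{i=s}^{j}\bigl(2^{i-j-s}+2^{-i}2^{-L(i-s)/M}\bigr)\lesssim 2^{-s}$ produces the decay.
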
 
\par 
We shall prove this in \S 5.
\begin{lemma}
Let $L_j$ and $G_j$ be as in Lemma $2$.  
 Then, for $j \geq 1$, 
$$\|G_j\|_{2} \leq c2^{-j\epsilon}\quad \text{for some $\epsilon>0$,} $$
where $\|G_j\|_{2}$ denotes the operator norm on $L^2$.
\end{lemma}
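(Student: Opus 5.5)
We follow the standard $TT^*$ argument of Ricci--Stein and Chanillo--Christ. Since $L_j$ is supported where $|x-y|\sim 2^j$ and $|L_j|\le c2^{-jn}$, we first localize. Cover $\bold R^n$ by cubes $Q$ of sidelength $2^{j}$. Because $G_j(f\chi_Q)$ is supported in a fixed dilate $Q^*$ (sidelength $c2^{j}$), with bounded overlap, it suffices to prove
$$\|G_j(f\chi_Q)\|_{L^2(Q^*)}\le c2^{-j\epsilon}\|f\|_{2}$$
uniformly in $Q$. Translating $Q$ to be centered at the origin changes $P$ to $P(x+h,y+h)$. This keeps the top-degree part $\sum_{|\alpha|=M}x^\alpha Q_\alpha$ with the same leading coefficients in $y$, up to lower-order terms in $x$; we normalized these so that $\max|c_\beta|=1$. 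We then rescale $x\to 2^jx$, $y\to 2^jy$. The operator becomes one with a kernel $2^{jn}L_j(2^j(x-y))$, which is bounded and smooth with uniform bounds on $\{2^{-6}\le|x-y|\le 2^{6}\}$ and supported in $x,y$ in a fixed ball. The phase is $P_j(x,y)=P(2^jx,2^jy)$, in which the coefficient of $x^\alpha y^\beta$ with $|\alpha|=M$, $|\beta|=L$ has size $2^{j(M+L)}|a_{\alpha\beta}|$, and at least one of these is $2^{j(M+L)}$.

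Next we compute $G G^*$ for the rescaled operator:
$$GG^*F(x)=\int F(z)\int e^{i(P_j(x,y)-P_j(z,y))}\tilde L(x-y)\overline{\tilde L(z-y)}\,dy\,dz.$$
Removing the terms of $P_j$ that depend on $x$ alone or on $y$ alone (they do not affect operator norms), the inner phase in $y$ is $\sum_{\alpha}(x^\alpha-z^\alpha)\tilde Q_\alpha(y)$. A van der Corput / sublevel-set estimate in $y$ on a fixed ball (Ricci--Stein's lemma: $|\int_B e^{i\phi(y)}\psi(y)\,dy|\le c\|\phi\|^{-\delta}(\|\psi\|_\infty+\|\nabla\psi\|_\infty)$ for polynomials $\phi$ of bounded degree, where $\|\phi\|$ is the sum of the moduli of the non-constant coefficients) gives the kernel bound
$$|H(x,z)|\le c\min\bigl(1,\ \|\Phi_{x,z}\|^{-\delta}\bigr),\qquad \Phi_{x,z}(y)=\sum_\alpha(x^\alpha-z^\alpha)\tilde Q_\alpha(y).$$
Here we use that $\tilde L$ is $C^1$ with uniform bounds, which is exactly where the gradient hypothesis on $L_j$ enters.

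It then remains to show that $\|\Phi_{x,z}\|$ is large except on a small set of pairs $(x,z)$. The coefficients of $y^\beta$ with $|\beta|=L$ in $\Phi_{x,z}$ are polynomials in $x,z$ whose top part is $2^{j(M+L)}\sum_{|\alpha|=M}a_{\alpha\beta}(x^\alpha-z^\alpha)$. By the normalization (after the rotation, the coefficient of $x_1^M$ has a degree-$L$ coefficient of size $1$), this is a nonzero polynomial in $(x,z)$ with a coefficient of size $2^{j(M+L)}$. A sublevel-set estimate for polynomials of bounded degree then shows that $\{(x,z): \|\Phi_{x,z}\|\le 2^{j(M+L)/2}\}$ has measure $\le c2^{-j\gamma}$ for some $\gamma>0$, uniformly in the lower-order coefficients. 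Splitting $H$ into the part on this set, which is bounded by $1$ on a set of measure $c2^{-j\gamma}$, and its complement, where $|H|\le c2^{-j\delta(M+L)/2}$, Schur's test (or simply the Hilbert--Schmidt norm) yields $\|GG^*\|_2\le c2^{-j\epsilon'}$. Undoing the scaling gives $\|G_j\|_2\le c2^{-j\epsilon'/2}$.

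The main obstacle is this last step: making the sublevel estimate uniform in the lower-order terms of $P$ and in the translation $h$. The key point is that the size of the leading coefficients $a_{\alpha\beta}$ with $|\alpha|=M$, $|\beta|=L$ is invariant under translation, and uniformity follows because all polynomial bounds depend only on the degree and the normalized top coefficient.
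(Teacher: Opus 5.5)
Your proposal is correct. It is essentially the Ricci--Stein argument that the paper cites for this lemma in place of a proof: localize to cubes of side $2^j$, translate and rescale (the coefficients $a_{\alpha\beta}$ with $|\alpha|=M$, $|\beta|=L$ are translation invariant), bound the kernel of $GG^*$ by the multivariable van der Corput lemma in $y$, and control the exceptional set of $(x,z)$ by a sublevel-set estimate that is uniform in the lower-order coefficients. Your bound depends only polynomially on $c_1, c_2$, which is what the later use in Lemma 4 requires, where $c_1=c2^{n\delta j}$ and $c_2=c2^{(n+1)\delta j}$.
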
 
This follows from Ricci-Stein \cite{5}. See also \cite{8} for 
an alternative proof.

\begin{lemma}If $w^u\in A_1$, then the operator $V$ is bounded on 
$L^2_w$.        
\end{lemma}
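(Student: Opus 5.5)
The plan is to write $V=\sum_{j\ge1}V_j$ and prove $\|V_j\|_{L^2_w\to L^2_w}\le a_j$ with $\sum_j a_j<\infty$. The difficulty is that $K$ satisfies only the $D_r$-condition, so Lemma 3 (which needs $|\nabla L_j|\le c_2|x|^{-n-1}$) cannot be applied to $V_j$ directly. I will therefore split each $V_j$ into a smooth part, handled by Lemma 3 together with interpolation with change of measure (Stein--Weiss), and a rough error part, controlled by the Dini modulus $\omega_r$.

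\emph{Step 1 (uniform weighted bound by a Schur/H\"older argument).} Let $k$ be a kernel supported in $\{2^{j-2}\le|x|\le2^{j+2}\}$ with $\bigl(2^{-jn}\int|2^{jn}k|^r\bigr)^{1/r}\le A$. Put $\tilde{T}f(x)=\int e^{iP(x,y)}k(x-y)f(y)\,dy$; the phase does not affect absolute-value estimates. Cauchy--Schwarz gives $|\tilde{T}f|^2\le \|k\|_1\int|k(x-y)||f(y)|^2\,dy$, and $\|k\|_1\le cA$. For any weight $v$, H\"older's inequality gives $\int|k(x-y)|v(x)\,dx\le cA\,M(v^u)(y)^{1/u}$. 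Hence
$$\|\tilde{T}f\|_{L^2_v}^2\le cA^2C_1(v^u)^{1/u}\|f\|^2_{L^2_v}\qquad\text{whenever } v^u\in A_1 .$$
Since $A_1$ is open under powers (reverse H\"older), there is $\delta>0$, depending on $C_1(w^u)$, with $(w^{1+\delta})^u\in A_1$. So this bound holds both for $v=w$ and for $v=w^{1+\delta}$.

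\emph{Step 2 (smoothing the kernel).} Set $k_j(x)=\varphi(2^{-j}x)K(x)$, and let $\psi$ be a normalized smooth bump supported in $\{|x|\le1\}$. For a parameter $0<\theta<1$ put $t_j=2^{-j\theta}$ and $\psi_j(x)=(2^jt_j)^{-n}\psi(x/(2^jt_j))$. Write $k_j=k_j*\psi_j+e_j$, where $e_j=k_j-k_j*\psi_j$.

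For the smooth part, $L_j:=k_j*\psi_j$ satisfies the support and size hypotheses of Lemma 2. Its gradient bound is $|\nabla L_j|\le c\,t_j^{-1}|x|^{-n-1}$, using $C_r$ and H\"older. I would re-examine the Ricci--Stein proof behind Lemma 3 to check that the $L^2$ decay depends polynomially on $c_2$, giving $\|G_j\|_2\le c\,t_j^{-N_0}2^{-j\epsilon}$ for some fixed $N_0$. A smooth cutoff is inserted here: $\varphi$ is smooth, and the $\varphi(2^{-j}\cdot)$ factor only contributes terms bounded by $C_r$. Choosing $\theta$ small then gives $\|G_j\|_2\le c2^{-j\epsilon'}$. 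On the other side, Step 1 gives $\|G_j\|_{L^2_{w^{1+\delta}}}\le c$. Interpolation with change of measure between $L^2$ and $L^2_{w^{1+\delta}}$, at the parameter that produces $L^2_w$, yields $\|G_j\|_{L^2_w}\le c2^{-j\epsilon'\delta/(1+\delta)}$, which is summable in $j$.

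For the error part, Minkowski's inequality and the definition of $\omega_{r,R}$ (with $R\approx 2^j$ and $|y|\le 2^jt_j$) give $\bigl(2^{-jn}\int|2^{jn}e_j|^r\bigr)^{1/r}\le c\,\omega_r(ct_j)+c\,t_j\rho$. The second term comes from the variation of $\varphi(2^{-j}\cdot)$. Step 1 with $v=w$ then bounds the error operator on $L^2_w$ by $c\bigl(\omega_r(c2^{-j\theta})+2^{-j\theta}\bigr)$.

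\emph{Step 3 (summation).} Since $\omega_r$ is nondecreasing, $\sum_{j\ge1}\omega_r(c2^{-j\theta})\le c\theta^{-1}\int_0^1\omega_r(t)\,dt/t\le c B_r$. Together with the geometric decay from Step 2, this gives $\sum_j\|V_j\|_{L^2_w\to L^2_w}<\infty$, so $V$ is bounded on $L^2_w$ with constant depending only on $\eta,\rho,r,n$ and the degree of $P$.

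The main obstacle is the quantitative version of Lemma 3 in Step 2. One must verify that the decay $2^{-j\epsilon}$ survives when the gradient constant grows like $2^{j\theta}$, i.e.\ that the $L^2$ bound depends at most polynomially on $c_2$. I would try first to reprove it via a $TT^*$ and van der Corput argument (as in \cite{8}), in which $c_2$ enters only linearly through an integration by parts.
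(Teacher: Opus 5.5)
Your proposal is correct and follows essentially the same route as the paper. The paper mollifies $\varphi(2^{-j}\cdot)K$ at scale $2^{j(1-\delta)}$. It treats the smooth part by Lemma 3, tacitly using, exactly as you flag, that the decay survives the growing constants $c_1\sim 2^{n\delta j}$, $c_2\sim 2^{(n+1)\delta j}$, interpolated with the H\"older/$M_u(w)$ bound by change of measure. It controls the remainder by $\omega_r(2^{-\delta j})+2^{-\delta j}$. The only cosmetic difference is that you invoke $w^{u(1+\delta)}\in A_1$ before a one-weight interpolation, whereas the paper interpolates two-weight bounds and uses $w^s\in A_1$, $s>u$, at the end.

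One small correction: with only $L^r$ control on $K$ you get $|L_j(x)|\le c\,t_j^{-n/r}|x|^{-n}$ and $|\nabla L_j(x)|\le c\,t_j^{-1-n/r}|x|^{-n-1}$. So the size bound is not uniform and the gradient factor is not $t_j^{-1}$. This is harmless, since only polynomial growth in $t_j^{-1}$ matters.
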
 
\begin{proof} Let 
$$N_j(x) = \varphi (2^{-j}x)K(x),\quad L_j(x)=N_j*\psi_{2^{-j+\delta j}}(x) 
\quad (\delta>0),$$
where $\psi\in C^\infty(\bold R^n)$ which is supported in $\{|x|<2^{-10}\}$ 
and satisfying $\int \psi =1$.  Then $L_j$ satisfies all the conditions of 
Lemma 2  with $c_1=c2^{n\delta j}$, $c_2= c2^{(n+1)\delta j}$, and we find   
\begin{gather} 
\|L_j\|_{L^1}\leq c\,C_1 , \label{tag 4.7}
\\
\|L_j\|_{L^r}\leq c\,C_r2^{-jn/u}. \label{tag 4.8}    
\end{gather} 
Put 
$$R_j(x)=N_j(x)-L_j(x)=\int \left(N_j(x)-N_j(x-y)\right)
\psi_{2^{-j+\delta j}}(y) \,dy.$$   
Then, it is easy to see that  
\begin{gather}  
\|R_j\|_{L^1}\leq c\,\omega_1(2^{-\delta j})+ c2^{-\delta j}
\leq c\omega_r(2^{-\delta j})+ c2^{-\delta j}, \label{tag 4.9} 
\\
\|R_j\|_{L^r}\leq c(\omega_r(2^{-\delta j})+ c2^{-\delta j})2^{-jn/u}. 
\label{tag 4.10} 
\end{gather}
Put
$$ U_j(f)(x) = 
\int_{\text{${\bold R}^n$}} e^{iP(x,y)}L_j(x - y)f(y)\, dy,
\qquad 
W_j(f)(x) = 
\int_{\text{${\bold R}^n$}} e^{iP(x,y)}R_j(x - y)f(y)\, dy.
$$
\par
First we estimate $U_j$. By H\"older's inequality and (4.7), (4.8) we have 
\begin{align} \label{tag 4.11} 
\|U_j(f)\|_{L^2_w}^2 &\leq c\int
\left(\int |L_j(x-y)|w(x)\,dx\right)|f(y)|^2\,dy  
\\   
&\leq c\int|f(y)|^2M_{u}(w)(y)\,dy.    \notag 
\end{align}  
On the other hand, if $\delta$ is small enough, by Lemma 3 
\begin{equation}\label{tag 4.12} 
\|U_j(f)\|_{L^2}^2\leq c2^{-\epsilon j}\|f\|_2^2 
\quad \text{for some $\epsilon>0$}.  
\end{equation} 
Interpolating between the estimates (4.11) and (4.12), we get   
$$\|U_j(f)\|_{L^2_{w^\theta}}^2
\leq c2^{-\epsilon (1-\theta)j}\int|f(y)|^2M_{u}(w)(y)^\theta\,dy, $$
for $\theta\in (0,1)$.  
Substituting $w^{1/\theta}$ for $w$, we have 
\begin{equation}   \label{tag 4.13} 
\|U_j(f)\|_{L^2_{w}}^2
\leq c2^{-\epsilon (s-u)j/s}\int|f(y)|^2M_{s}(w)(y)\,dy 
\quad \text{for all $s>u$}. 
\end{equation}  
\par 
Next we estimate $W_j$.  By H\"older's inequality and (4.9), (4.10)  
\begin{align} 
\|W_j(f)\|_{L^2_w}^2&\leq c(\omega_r(2^{-\delta j})+ c2^{-\delta j})\int
\left(\int |R_j(x-y)|w(x)\,dx\right)|f(y)|^2\,dy \label{tag 4.14}   
\\
&\leq c(\omega_r(2^{-\delta j})+ c2^{-\delta j})^2\int|f(y)|^2M_{u}(w)(y)\,dy. 
\notag 
\end{align}
By (4.13) and (4.14), for all $s>u$,  
$$
\|V(f)\|_{L^2_w}
\leq c\sum_{j\geq 1}(\omega_r(2^{-\delta j})+2^{-\delta j}+
2^{-\epsilon (s-u)j/(2s)})\|f\|_{L^2_{M_{s}(w)}} 
\leq c_s\|f\|_{L^2_{M_{s}(w)}}. 
$$
From this we get the conclusion of Lemma 4, since $w^s\in A_1$ for 
some $s>u$.         
\end{proof} 

 Using these results, we can prove Proposition 1.   
   Let $N_j$ and $\psi$ be as in the proof of Lemma 4. 
For a positive integer $s$ let
$$ L_j^{(s)}(x)=N_j*\psi_{2^{-j+\delta s}}(x) \quad (\delta>0).$$ 
Put 
$$R_j^{(s)}(x)=N_j(x)-L_j^{(s)}(x)=\int \left(N_j(x)-N_j(x-y)\right)
\psi_{2^{-j+\delta s}}(y) \,dy.$$   Then $L_j^{(s)}$ is supported in 
$\{2^{j-6}\leq |x|\leq 2^{j+6}\}$ and satisfies 
$$
|L_j^{(s)}(x)|\leq c2^{n\delta s}|x|^{-n},
\qquad   
|\nabla L_j^{(s)}(x)|\leq c2^{(n+1)\delta s}|x|^{-n-1}.
$$
Set 
$$ U_j^{(s)}(f)(x) = 
\int_{\text{${\bold R}^n$}} e^{iP(x,y)}L_j^{(s)}(x - y)f(y)\, dy,
\ \ 
W_j^{(s)}(f)(x) = 
\int_{\text{${\bold R}^n$}} e^{iP(x,y)}R_j^{(s)}(x - y)f(y)\, dy.
$$
Put 
$$F_\lambda^{s}=\left\{x\in \bold R^n : 
\left|\sum_{j\geq s}U_j^{(s)}(B_{j-s})(x)\right|>\lambda\right\}.$$
Then, if $(n+1)\delta<\eta/2$ by Lemma 2 
\begin{equation}  \label{tag 4.15} 
w\left(F_{c_\eta 2^{-\eta s/2}\lambda}^{s}\right) \leq 
c2^{-\epsilon s}\lambda^{-1} \|f\|_{L^1_w}, 
\end{equation} 
where  $\epsilon$, $\eta$ and $c_\eta$ are as in Lemma 2.     
Since $\sum_{s=1}^\infty c_\eta2^{-\eta s/2}=1$, we have 
$$\left\{x\in \bold R^n : 
\left|\sum_{s\geq 1}\sum_{j\geq s}U_j^{(s)}(B_{j-s})(x)\right|>\lambda
\right\} \subset \bigcup_{s\geq 1}F_{c_\eta 2^{-\eta s/2}\lambda}^s.$$
Thus by (4.15) 
\begin{align}  
w\left(\left\{x\in \bold R^n : 
\left|\sum_{s\geq 1}\sum_{j\geq s}U_j^{(s)}(B_{j-s})(x)\right|>\lambda
\right\}\right) 
&\leq \sum_{s\geq 1}w\left(F_{c_\eta 2^{-\eta s/2}\lambda}^s\right)
\label{tag 4.16}  
\\
&\leq c\lambda^{-1}\|f\|_{L^1_w}.   \notag 
\end{align} 
\par 
Since 
$$\|R_j^{(s)}\|_{L^r}\leq c(\omega_r(2^{-\delta s})+ 2^{-\delta s})2^{-jn/u},$$
by H\"older's inequality and the condition that $w^u\in A_1$ we find  
$$\left\|\sum_{j\geq s} W_j^{(s)}(B_{j-s})\right\|_{L^1_w}
\leq c\left(\omega_r(2^{-\delta s})+2^{-\delta s}\right)\|f\|_{L^1_w}.$$
Thus, by Chebyshev's inequality we have 
\begin{multline}  \label{tag 4.17}  
w\left(\left\{x\in \bold R^n : 
\left|\sum_{s\geq 1}\sum_{j\geq s}W_j^{(s)}(B_{j-s})(x)\right|>\lambda
\right\}\right)\\
\leq c\left(\sum_{s\geq 1}\left(\omega_r(2^{-\delta s})+2^{-\delta s}\right)
\right)\lambda^{-1}\|f\|_{L^1_w}.  
\end{multline} 
By (4.6), (4.16) and (4.17) we have 
\begin{equation} \label{tag 4.18}  
w\left(\left\{x\in \bold R^n\setminus \mathcal U : |V(b)(x)|> 2\lambda 
\right\}\right) \leq c\lambda^{-1}\|f\|_{L^1_w}. 
\end{equation} 
\par 
By (4.3) we see that 
\begin{equation} \label{tag 4.19}  
w(\mathcal U) \leq c_w\lambda^{-1}\|f\|_{L^1_w}. 
\end{equation} 
By Lemma 4 and (4.4) 
\begin{equation} \label{tag 4.20}   
w\left(\left\{x \in \bold R^n : |V(g)(x)| > \lambda  \right\}\right) 
 \leq c\lambda^{-1}\|f\|_{L^1_w}. 
\end{equation} 
\par
Combining (4.18), (4.19) and (4.20), we conclude the proof of Proposition 1.

\section{Proof of Lemma $2$}

In this section we shall prove Lemma 2 in \S 4. 
 For $k, m \geq 1$, put
\begin{equation}  \label{tag 5.1}    
H_{km}(x,y) = \int e^{-iP(z,x)+iP(z,y)}\overline{L}_k(z - x)L_m(z - y)\, dz.  
\end{equation}
Then $G_k^*G_m(f)(x) = \int H_{km}(x,y)f(y)\, dy$, where  
$G_k^*$ denotes the adjoint of $G_k$.

\begin{lemma}  Let $k \geq m \geq 1$. Then, 
$H_{km}(x,y) = 0 $ unless $|x - y| \leq 2^{k+7};$ and 
\begin{enumerate}
\item[(1)] 
 \qquad  \qquad $|H_{km}(x,y)| \leq c2^{-kn}$,    
\item [(2)]  
\qquad \qquad  
$|H_{km}(x,y)| \leq c2^{-kn}2^{-m}|q(x) - q(y)|^{-1/M}.$
\end{enumerate} 
\end{lemma}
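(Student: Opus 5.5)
The plan is to read everything off the definition (5.1), using only the support and size conditions on the kernels $L_j$ from Lemma 2. For (2) the extra ingredient is van der Corput's lemma, applied in the $z_1$-variable to the $M$-th derivative of the phase. Throughout, $P$ is normalized as in \S 4, so that the coefficient of $z_1^M$ in $P(z,y)$ is $q(y)$ with $\max_{|\beta|=L}|c_\beta|=1$.

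\emph{Support and (1).} If $H_{km}(x,y)\neq 0$, then for some $z$ we have $|z-x|\le 2^{k+6}$ and $|z-y|\le 2^{m+6}\le 2^{k+6}$, since $k\ge m$. Hence $|x-y|\le 2^{k+7}$. For (1) we bound trivially:
$$|H_{km}(x,y)|\le \sup|L_k|\int |L_m(z-y)|\,dz.$$
On its support $|L_k(z-x)|\le c_1|z-x|^{-n}\le c2^{-kn}$. Also $\|L_m\|_{L^1}\le c$, because $L_m$ is bounded by $c2^{-mn}$ on a set of measure $c2^{mn}$. Together these give $|H_{km}(x,y)|\le c2^{-kn}$.

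\emph{(2).} Write $z=(z_1,z')$ and set $\phi(z)=P(z,y)-P(z,x)$. Since $P$ has degree $M$ in $z$, we have
$$\partial_{z_1}^M\phi = M!\,\bigl(q(y)-q(x)\bigr),$$
a constant. Fix $z'$ and put $\psi(z_1)=\overline{L}_k(z-x)L_m(z-y)$. Van der Corput's lemma in the form
$$\left|\int_{\bold R} e^{i\phi}\psi\,dz_1\right|\le c_M\,|q(x)-q(y)|^{-1/M}\Bigl(\|\psi\|_\infty+\int|\psi'(z_1)|\,dz_1\Bigr)$$
is valid for compactly supported absolutely continuous $\psi$. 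We then estimate the right side using the size and gradient bounds on the kernels:
\begin{itemize}
\item $\|\psi\|_\infty\le c2^{-kn}2^{-mn}$.
\item $|\partial_{z_1}\psi|\le c\bigl(2^{-k(n+1)}2^{-mn}+2^{-kn}2^{-m(n+1)}\bigr)\le c2^{-kn}2^{-mn}2^{-m}$, using $k\ge m$.
\item The support of $\psi$ in $z_1$ has length at most $2^{m+7}$.
\end{itemize}
Hence $\int|\psi'|\,dz_1\le c2^{-kn}2^{-mn}$, and so
$$\left|\int e^{i\phi}\psi\,dz_1\right|\le c2^{-kn}2^{-mn}|q(x)-q(y)|^{-1/M}.$$
Next we integrate over $z'$. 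The integrand vanishes unless $|z'-y'|\le 2^{m+6}$, a set of $(n-1)$-dimensional measure $c2^{m(n-1)}$. This yields $|H_{km}(x,y)|\le c2^{-kn}2^{-m}|q(x)-q(y)|^{-1/M}$, which is (2). The constants depend only on $c_1$, $c_2$, $M$ and $n$.

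The only delicate point is the van der Corput step. The amplitude $\psi$ is neither monotone nor supported on a single interval. I would avoid any interval-splitting by using the total-variation version stated above: integrate by parts against $\int_{-\infty}^{z_1}e^{i\phi}$, whose absolute value is bounded uniformly by $c|q(x)-q(y)|^{-1/M}$ on every interval. It is also important that the gain $2^{-m}$ comes from the length $\sim 2^m$ of the $z_1$-support being balanced against the derivative bound $2^{-m}\cdot 2^{-kn-mn}$. So the estimate must be done with the smaller scale $m$ controlling both the length and the derivative, not the scale $k$.
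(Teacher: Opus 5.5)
Your proof is correct and follows the paper's argument. The paper likewise observes that $\partial_{z_1}^M$ of the phase equals $M!(q(x)-q(y))$, applies van der Corput's lemma to get $|\int_a^b e^{i(P(z,x)-P(z,y))}\,dz_1|\le c|q(x)-q(y)|^{-1/M}$ uniformly in $a,b$, and integrates by parts in $z_1$; your bookkeeping (the $z_1$-support of length $\sim 2^m$, the derivative gain $2^{-m}$ from $k\ge m$, and the factor $2^{m(n-1)}$ from the $z'$-integration) fills in what the paper leaves implicit, and the only cosmetic fix is to take the primitive of $e^{i\phi}$ from a point left of the support of $\psi$ rather than from $-\infty$.
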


\begin{proof}  We prove only the estimate of (2) since the other assertions 
immediately follow from the definition of $H_{km}$ in (5.1).  
 We first note that
$$(\partial /\partial z_1)^M(P(z,x) - P(z,y)) = M!(q(x) - q(y)).$$ 
Hence, from van der Corput's lemma it follows that
$$\left|\int_a^b e^{i(P(z,x) - P(z,y))}\, dz_1\right| 
\leq c|q(x) - q(y)|^{-1/M},$$ for any $a$ and $b$ (see \cite[ p.\ 152]{1}).  
\par 
Therefore by integration by parts in variable $z_1$ in the formula of (5.1)  
 we get the conclusion.  
\end{proof} 
\par 
For the rest of this note, we denote by $P(x)$ a real-valued polynomial on 
$\bold R^n$.    
\begin{definition}
For a polynomial $P(x) = \sum_{|\alpha|\leq N}a_{\alpha}x^{\alpha}$ of degree $N$, define
$$\|P\| = \max_{|\alpha|= N}|a_{\alpha}|.$$
\end{definition} 
\begin{definition}
For a polynomial $P$ and $\beta > 0$, let
$$\mathcal R(P,\beta) = \{x \in \bold R^n : |P(x)| \leq \beta \}.$$ 
\end{definition} 

Let $d(E,F)$ denote the distance between sets $E$ and $F$.  
We now state a geometrical lemma for polynomials, which will be proved in \S 6.

\begin{lemma}  Let $k$, $m $ be integers  such that $k \geq m $.  Suppose 
$N \geq 1$.  Then, for any polynomial $P$ of degree $N$ satisfying $\|P\| = 1$ 
and for any $\gamma >0$,  there exists a positive constant $C_{n,N,\gamma}$ 
depending only on $n$,  $N$ and $\gamma$  such that 
$$\left|\left\{x \in B(a,2^k) : d\left(x, \mathcal R(P,2^{Nm})\right) \leq \gamma 
2^m \right\}\right| \leq C_{n,N, \gamma}2^{(n-1)k}2^m $$ 
uniformly in $a \in \bold R^n$.   
\end{lemma}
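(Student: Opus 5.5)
We prove Lemma 6 by reducing it, through scaling, to a statement about polynomials in the unit ball, and then prove that statement by a one-dimensional slicing argument.

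\emph{Step 1: scaling.} Put $x=2^m z$ and $\tilde P(z)=2^{-Nm}P(2^m z)$. Then $\tilde P$ has degree $N$ and $\|\tilde P\|=\|P\|=1$. Under this change of variables the set becomes
$$\{z\in B(a',2^{k-m}) : d(z,\mathcal R(\tilde P,1))\le \gamma\},$$
and its measure is multiplied by $2^{nm}$. With $t=2^{k-m}\ge 1$, it therefore suffices to show
$$|\{z\in B(a',t): d(z,\mathcal R(\tilde P,1))\le\gamma\}|\le C_{n,N,\gamma}\, t^{n-1}.$$
Equivalently, after also rescaling by $t$: for $Q$ of degree $N$ with $\|Q\|=t^{N}$ (or at least comparable to it), the $\gamma/t$-neighbourhood of $\{|Q|\le 1\}$ meets the unit ball in measure at most $C/t$.

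\emph{Step 2: choosing a good direction.} Since $\|\tilde P\|=1$, some top-degree coefficient has modulus $1$. By a standard compactness argument on the homogeneous part $p_N$, there is a unit vector $e$, chosen from a fixed finite set depending on $n$ and $N$, with $|p_N(e)|\ge c_{n,N}$. This is the same device as the rotation and normalization in \cite{1} and Sublemma 2. Restricted to any line $\{z_0+se\}$, $\tilde P$ is then a one-variable polynomial of degree $N$ whose leading coefficient has modulus at least $c_{n,N}$.

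\emph{Step 3: slicing.} For a one-variable polynomial $g(s)=c\prod(s-\zeta_i)$ with $|c|\ge c_0$, the sublevel set $\{|g|\le 1\}$ is contained in the union of the discs $|s-\zeta_i|\le C$, and so its real trace is covered by at most $N$ intervals of length $C$. Their $\gamma$-neighbourhoods give at most $N$ intervals of length $C+2\gamma$.

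The obstacle is that the $n$-dimensional $\gamma$-neighbourhood is not the union of the one-dimensional neighbourhoods on the lines. To handle this, observe that if $d(z,\mathcal R)\le\gamma$, then there is $w\in\mathcal R$ with $|z-w|\le\gamma$. By Markov/Bernstein-type inequalities on the ball $B(w,\gamma)$, applied to $\tilde P$ with leading part controlled in direction $e$, the one-variable restriction at $z$ satisfies $\sup_{|s|\le\gamma'}|\tilde P(z+se)|\le C'$. I expect this step to be the most delicate, because the lower-order coefficients are not controlled.

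I would try first to avoid it as follows. For $\zeta$ with $|\zeta|\le\gamma$, put $h(s)=\tilde P(w+\zeta+se)$. Then $h$ has leading coefficient $\ge c_0$, and $\tilde P(w)$ is a value of the polynomial $\zeta\mapsto \tilde P(w+\zeta)$. Alternatively, and more robustly, cover the neighbourhood by $C_\gamma$ translates: $\{z: d(z,\mathcal R)\le\gamma\}\subset\bigcup_{\zeta\in F}(\mathcal R+\zeta)$ up to replacing $\mathcal R(\tilde P,1)$ by the sets $\mathcal R(\tilde P,\beta)$ on $\gamma$-cubes. The cleanest route is to use Chanillo--Christ's estimate from the proof of \cite[Lemma 4.1]{1}, namely that $\{z:\ |\tilde P(z)|\le 1\}$ and its $\gamma$-fattening are contained in $\{z:|\tilde P(z)|\le C_\gamma\}$ intersected with the union of boundedly many "root tubes" along $e$. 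This holds because fattening by $\gamma$ in the $e$ direction is controlled by Step 3 with $1$ replaced by $C_\gamma$, and fattening in transverse directions moves the roots of the slice continuously, each by at most a bounded amount in the relevant sense.

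\emph{Step 4: Fubini.} Granting that each line $\{z_0+se\}$ meets the fattened set in at most $N$ intervals of total length $C_{n,N,\gamma}$, integrate over $z_0$ ranging over the orthogonal projection of $B(a',t)$, which has measure $\lesssim t^{n-1}$. This gives the bound $C t^{n-1}$. Undoing the scaling yields $2^{nm}\cdot 2^{(k-m)(n-1)}=2^{(n-1)k}2^m$, as claimed.
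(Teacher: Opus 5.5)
There is a genuine gap in Step 4. Your scaling in Step 1 matches the paper's, and Steps 2--3 with Fubini do bound the \emph{unfattened} set $\mathcal R(\tilde P,1)\cap B(a',t)$. But the fattening is the whole content of the lemma: Lemma 7 applies it to cubes of side $2^{i-s}$ meeting a sublevel set. The per-line claim you ``grant'' in Step 4 is false for every direction $e$ chosen from the top-degree part. Take $n=N=2$ and $\tilde P(z)=z_1^2+Az_2$ with $A>0$ large. Then $\|\tilde P\|=1$, and any admissible $e$ has $e_1\ne 0$; put $\sigma=e_2/e_1$. The parabola $\Gamma=\{z_2=-z_1^2/A\}$ lies in $\mathcal R(\tilde P,1)$. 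Its tangent line at $z_1=x_0:=-\sigma A/2$ has slope $\sigma$, and $\Gamma$ lies exactly $(z_1-x_0)^2/A$ below that line. So the part of the tangent line over $|z_1-x_0|\le\sqrt{\gamma A}$ lies in the $\gamma$-fattening. This is a segment in direction $e$ of length at least $2\sqrt{\gamma A}$. Since $t=2^{k-m}$ is unrestricted, no per-line bound $C_{n,N,\gamma}$ can hold.

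Your proposed justifications fail for the same reason: the lower-order coefficients are not normalized. On this segment $|\tilde P|$ reaches $A\gamma$, so the fattening is not contained in $\{|\tilde P|\le C_\gamma\}$. There is no Markov/Bernstein control at scale $\gamma$. Shifting a line by $\gamma$ in the $z_2$-direction adds $A\gamma$ to the slice, which can move its roots arbitrarily far. The lemma itself survives, since the fattened set is essentially a $\gamma$-tube about $\Gamma$ of area $\lesssim\gamma t$ in the ball. A slicing direction fixed in advance cannot see this, because the bad lines are exactly those tangent to the zero set, where $\partial_e\tilde P$ is small.

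What is missing is a description of the sublevel set that is stable under fattening and uses boundedly many directions adapted to $P$; this is the paper's Lemma 11. It covers $\mathcal N(\mathcal R(P,\beta))$ by $C_{n,N}$ rotated sets $U_i(\mathcal L_i)$. Each $\mathcal L_i$ is a union of unit cubes with at most $c$ cubes in every column $S_m$. Having boundedly many cubes per column survives adding all neighbouring cubes, which is how $\mathcal N(\cdot)$ is absorbed (Remark 1 handles general $\gamma$). It also gives $|B(a,2^k)\cap U_i(\mathcal L_i)|\le c2^{(n-1)k}$ at once.

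Lemma 11 is proved by induction on $N$. First rotate by the $\Theta$ of Sublemma 3, which makes every $\|D_j(P\circ\Theta)\|\sim 1$. The part of $\mathcal R(P\circ\Theta,\beta)$ where some $|D_j(P\circ\Theta)|\le\beta$ is covered by the degree-$(N-1)$ hypothesis applied to the derivatives. This is precisely where tangency, as in the example above, occurs. On unit cubes where $\kappa_jD_j(P\circ\Theta)>\beta$ for all $j$, the polynomial increases by more than $3\beta$ between points of a column of the frame $O_\kappa$ whose $n$-th coordinates differ by at least $c_n$. Hence each such column contains at most $c_n+3$ cubes meeting the sublevel set, with Sublemma 1 bounding the number of intervals. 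Some derivative induction of this kind, or an equivalent device, is needed; without it Step 4 has no valid input.
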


Let $\lambda > 0$ and let $\{\mathcal B_j\}_{j\geq 0}$ be a family of measurable functions such that 
\begin{equation} \label{tag 5.2} 
\int_Q |\mathcal B_j| \leq \lambda |Q| 
\end{equation}
for all cubes $Q$ in $\bold R^n$ with sidelength $\ell (Q) = 2^j$.   
\par 
Then we have the following. 

\begin{lemma}  Let the kernels $H_{ji}$ be as in Lemma $5$.  Then, 
we can find a constant $c$ such that 
$$\sum_{i=s}^j\sup_{x \in \text{${\bold R}^n$}} \left|\int \mathcal B_{i-s}(y)H_{ji}(x,y)\, dy \right| \leq c\lambda 2^{-s}$$for all integers $j$ and $s$ such 
that $0 < s \leq j$.
\end{lemma}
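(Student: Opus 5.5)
Fix $x$ and $0<s\le j$; the plan is to bound each term $\left|\int \mathcal B_{i-s}(y)H_{ji}(x,y)\,dy\right|$ by $c\lambda 2^{-s}2^{-(j-i)\epsilon'}$ for some $\epsilon'>0$ (or at least by something whose sum over $s\le i\le j$ is $\le c\lambda2^{-s}$). By Lemma 5 (with $k=j\ge m=i$), $H_{ji}(x,\cdot)$ is supported in $B(x,2^{j+7})$, so only $y\in B(x,2^{j+7})$ matter. I split this ball by the set where $q(y)$ is close to $q(x)$:
$$E=\left\{y\in B(x,2^{j+7}) : d\bigl(y,\mathcal R(q-q(x),2^{L(i-s)+L\theta s})\bigr)\le \sqrt n\,2^{i-s}\right\},$$
with a parameter $\theta$ to be chosen, and $\mathcal R$ as in Definition 2. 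Here $q-q(x)$ has degree $L\ge1$ and $\|q-q(x)\|=\|q\|=1$ by the normalization made in \S 4, so Lemma 6 applies.

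On $E$ I use estimate (1) of Lemma 5. First cover $E$ by cubes of side $2^{i-s}$ meeting $E$; by (5.2) each contributes at most $\lambda 2^{(i-s)n}$, and all of them lie in the $\sqrt n2^{i-s}$-neighbourhood of $E$, so the total integral is at most $\lambda|\{y\in B(x,2^{j+8}) : d(y,\mathcal R(q-q(x),2^{L(i-s+\theta s)}))\le 2\sqrt n 2^{i-s}\}|$. Apply Lemma 6 with $k=j+8$, $m=i-s+\theta s\le j$ and a suitable fixed $\gamma$ (the enlarged radius is $\le\gamma 2^{m}$). This gives a bound $c\,2^{(n-1)j}2^{i-s+\theta s}$, and multiplying by $2^{-jn}$ the $E$-part is $\le c\lambda 2^{i-j}2^{-(1-\theta)s}$.

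Off $E$, every $y$ satisfies $|q(y)-q(x)|>2^{L(i-s+\theta s)}$. Estimate (2) then gives $|H_{ji}(x,y)|\le c2^{-jn}2^{-i}2^{-L(i-s+\theta s)/M}$. Covering $B(x,2^{j+7})$ by $\sim 2^{(j-i+s)n}$ cubes of side $2^{i-s}$ and using (5.2) gives $\int_{B(x,2^{j+7})}|\mathcal B_{i-s}|\le c\lambda2^{jn}$. So the complement contributes at most $c\lambda 2^{-i}2^{-L(i-s+\theta s)/M}\le c\lambda2^{-i}$.

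Summing over $s\le i\le j$, the $E$-parts give $c\lambda2^{-(1-\theta)s}\sum_{i\le j}2^{i-j}\le c\lambda2^{-(1-\theta)s}$, and the complementary parts give $\sum_{i\ge s}c\lambda2^{-i}\le c\lambda2^{-s}$. The obstacle is that $\theta>0$ loses a factor in the first sum while $\theta=0$ seems needed. I would take $\theta=0$ and check that this is legitimate: with $\theta=0$ and $m=i-s\ge0$, Lemma 6 still applies (it only requires $k\ge m$), and the first sum becomes $c\lambda2^{-s}$. The complementary part needs no decay from $\theta$ because of the explicit factor $2^{-i}$. So $\theta=0$ should work. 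The main point to verify carefully is the covering step, i.e.\ that the cubes touching $E$ lie in a $\gamma2^{i-s}$-neighbourhood of $\mathcal R$ intersected with $B(x,2^{j+8})$, with $\gamma$ depending only on $n$. The constant then depends only on $n$, $N$, $M$.
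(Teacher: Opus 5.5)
Your argument with $\theta=0$ is correct and is essentially the paper's proof. The paper splits the cubes of side $2^{i-s}$ near $x$ into two groups, which is exactly your split into $E$ and its complement:
those meeting $\mathcal R(q(\cdot)-q(x),2^{L(i-s)})$, bounded via Lemma 6 and Lemma 5 (1) by $c\lambda 2^{i-j-s}$, and the remaining ones, bounded via Lemma 5 (2) by $c\lambda 2^{-i}2^{-L(i-s)/M}$. The parameter $\theta$ is an unnecessary detour, and for very large $n$ the enlarged ball needs radius $2^{j+c_n}$ rather than $2^{j+8}$, which Lemma 6 absorbs harmlessly.
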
 

\begin{definition} 
For $m \in \bold Z$ (the set of all integers), let $\mathcal D_m$ be the family of all closed dyadic cubes $Q$ with sidelength $\ell (Q) = 2^m$.  
\end{definition} 

\begin{proof}[Proof of Lemma $7$]  Fix $x \in \bold R^n$.  Let 
$$\mathcal F =  \left\{Q \in \mathcal D_{i-s} : Q \cap B(x,2^{j+2}) 
\ne \emptyset 
\right\} \quad (0 < s \leq i \leq j).$$ 
Then clearly  $\sum_{Q \in \mathcal F}|Q| \leq c2^{jn}$.  

Decompose $\mathcal F = \mathcal F_0 \cup \mathcal F_1$, where
$$\mathcal F_0 =  \left\{Q \in \mathcal F : Q \cap \mathcal R(q(\cdot) - q(x),2^{L(i-s)}) \ne \emptyset \right\}$$ 
and $\mathcal F_1 = \mathcal F \setminus \mathcal F_0$.  Then by Lemma 6 we have\begin{equation} \label{tag 5.3}  
\sum_{Q \in \mathcal F_0}|Q| \leq c2^{(n-1)j}2^{i-s}. 
\end{equation} 
\par 
By Lemma 5 (1), (5.2) and (5.3), we see that
\begin{multline}  \label{tag 5.4}  
\sum_{Q \in \mathcal F_0}\int_Q\left|\mathcal B_{i-s}(y)H_{ji}(x,y)\right|\, dy  
\leq c2^{-jn}\sum_{Q \in \mathcal F_0}\int_Q\left|\mathcal B_{i-s}(y)\right|\, dy 
 \\
\leq c2^{-jn}\lambda \sum_{Q \in \mathcal F_0} |Q| 
\leq c2^{-jn}\lambda 2^{(n-1)j}2^{i-s} 
= c\lambda 2^{i-j-s}. 
\end{multline}

Next, by Lemma 5 (2), (5.2) and the estimate $\sum_{Q \in \mathcal F_1}|Q| 
\leq c2^{jn}$,  we have   
\begin{multline}   \label{tag 5.5} 
\sum_{Q \in \mathcal F_1}\int_Q\left|\mathcal B_{i-s}(y)H_{ji}(x,y)\right|\, dy  
\leq c2^{-jn}2^{-i}2^{-L(i-s)/M}\sum_{Q \in \mathcal F_1}\int_Q\left|\mathcal B_{i-s}(y)\right|\, dy 
 \\
 \leq c2^{-jn}2^{-i}2^{-L(i-s)/M}\lambda \sum_{Q \in \mathcal F_1} |Q| 
 \leq c\lambda 2^{-i}2^{-L(i-s)/M}. 
\end{multline}   

From  (5.4) and (5.5) it follows that
\begin{multline*}   
 \int \left|\mathcal B_{i-s}(y)H_{ji}(x,y)\right|\, dy  = 
\sum_{Q \in \mathcal F}\int_Q\left|\mathcal B_{i-s}(y)H_{ji}(x,y)\right|\, dy  
\\
= \sum_{\nu=0}^1\sum_{Q \in \mathcal F_{\nu}}\int_Q\left|\mathcal B_{i-s}(y)H_{ji}(x,y)\right|\, dy  
\leq c\lambda \left(2^{i-j-s} +  2^{-i}2^{-L(i-s)/M}\right). 
\end{multline*}   
Thus we see that
$$
\sum_{i=s}^j\sup_{x \in \text{${\bold R}^n$}} 
\int \left|\mathcal B_{i-s}(y)H_{ji}(x,y)\right|\, dy  
\leq c\lambda \sum_{i=s}^j\left(2^{i-j-s} +  2^{-i}2^{-L(i-s)/M}\right) 
\leq c\lambda 2^{-s}. 
 $$
This completes the proof of Lemma 7.
\end{proof}

By Lemma 7 we readily get the following.

\begin{lemma}  Let $\{\mathcal B_j\}_{j\geq 0}$ be as in Lemma $7$.  Suppose 
$\sum_{j\geq 0}\|\mathcal B_{j}\|_{L^1} < \infty$.  Let $G_j$ be as in 
Lemma $2$. 
Then, for any positive integer $s$, we have 
$$\left\|\sum_{j\geq s}G_j(\mathcal B_{j-s})\right\|_{L^2}^2 
\leq c\lambda 2^{-s}\sum_{j\geq 0}\|\mathcal B_{j}\|_{L^1}.  
$$ 
\end{lemma}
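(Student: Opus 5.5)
We expand the square and reduce everything to the kernels $H_{km}$ of Lemma 5, so that Lemma 7 applies directly. Writing $F_j=\mathcal B_{j-s}$ for $j\ge s$, we have
$$\left\|\sum_{j\geq s}G_j(F_j)\right\|_{L^2}^2=\sum_{j\ge s}\sum_{i\ge s}\langle G_j^*G_i F_i, F_j\rangle \le 2\sum_{j\ge s}\sum_{i=s}^{j}\left|\langle G_j^*G_iF_i,F_j\rangle\right| ,$$
since the terms with $i>j$ are the complex conjugates of those with $i<j$ (the diagonal is counted twice, which only costs a factor). The assumption $\sum\|\mathcal B_j\|_{L^1}<\infty$, together with the boundedness of $G_j$ on $L^2$ and the fact that the kernels of $G_j^*G_i$ are bounded (Lemma 5 (1)), makes all these pairings absolutely convergent. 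This justifies the rearrangement; we first do it for finitely many $j$ and then pass to the limit.

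For $i\le j$ we have $G_j^*G_i F_i(x)=\int H_{ji}(x,y)\mathcal B_{i-s}(y)\,dy$. Hence
$$\left|\langle G_j^*G_iF_i,F_j\rangle\right|\le \|\mathcal B_{j-s}\|_{L^1}\sup_{x}\left|\int \mathcal B_{i-s}(y)H_{ji}(x,y)\,dy\right| .$$
Summing over $i$ from $s$ to $j$ and invoking Lemma 7, which applies since $0<s\le j$ and $\{\mathcal B_j\}$ satisfies (5.2), gives
$$\sum_{i=s}^j\left|\langle G_j^*G_iF_i,F_j\rangle\right|\le c\lambda 2^{-s}\|\mathcal B_{j-s}\|_{L^1}.$$
Summing over $j\ge s$ then yields the bound $c\lambda2^{-s}\sum_{j\ge0}\|\mathcal B_j\|_{L^1}$, which is the claim.

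The only delicate point is bookkeeping. Lemma 7 requires the larger index to be in the first slot of $H_{ji}$ with $i\le j$. This is why we symmetrize to $i\le j$ and put the $L^1$ norm on $\mathcal B_{j-s}$, the function attached to the larger scale. If we instead bounded by $\|\mathcal B_{i-s}\|_{L^1}$, the sum over $j$ would not be controlled. The convergence issues are routine given the hypotheses.
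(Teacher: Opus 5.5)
Your proposal is correct and follows the paper's proof essentially verbatim. You restrict to $i\le j$ at the cost of a factor $2$, bound each pairing by $\|\mathcal B_{j-s}\|_{L^1}\,\|G_j^*G_i(\mathcal B_{i-s})\|_{L^\infty}$, apply Lemma 7 to the inner sum over $i$, and then sum over $j$. Your extra remarks justifying absolute convergence are a harmless addition that the paper leaves implicit.
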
 

\begin{proof} Let $\langle \cdot,\cdot \rangle$ denote the inner product in 
$L^2$.  Using Lemma 7, we see that
\begin{multline*} 
\left\|\sum_{j\geq s}G_j(\mathcal B_{j-s})\right\|_{L^2}^2 
\leq 2\sum_{j\geq s}\sum_{i=s}^j\left|\langle 
G_j(\mathcal B_{j-s}), G_i(\mathcal B_{i-s})\rangle \right| 
\leq 2\sum_{j\geq s}\sum_{i=s}^j\left|\langle \mathcal B_{j-s}, G_j^*G_i
(\mathcal B_{i-s})\rangle \right|
\\  
\leq 2\sum_{j\geq s}\sum_{i=s}^j\|\mathcal B_{j-s}\|_{L^1}
\|G_j^*G_i(\mathcal B_{i-s})\|_{L^\infty}
\leq c\lambda 2^{-s}\sum_{j\geq s}\|\mathcal B_{j-s}\|_{L^1}.
\end{multline*}   
This completes the proof of Lemma 8.
\end{proof}

\begin{definition} 
For each $j \geq 0$, let $\mathcal G_j$  be a family of non-overlapping closed 
dyadic cubes $Q$ such that $\ell (Q) \leq 2^j$.  We suppose that if $Q \in \mathcal G_j$, $R \in \mathcal G_k$ and $j \ne k$, then $Q$ and $R$ are non-overlapping and 
that $\sum_{j\geq 0}\sum_{Q\in \mathcal G_j}|Q| < \infty$.   Put $\mathcal G = 
\cup_{j\geq 0}\mathcal G_j$. 
\end{definition} 

Let $\lambda > 0$.  To each $Q \in \mathcal G $ we associate $f_Q \in L^1$  
such that
$$ \int |f_Q| \leq \lambda |Q|, \qquad \text{supp}(f_Q) \subset Q.$$   
We define  $\mathcal A_i = \sum_{Q \in \mathcal G_i}f_Q$.

\begin{lemma}  Let $G_j$ be as in Lemma $2$ and 
let $v$ be a locally integrable positive function. 
 Then for a positive integer $s$ we have 
$$\left\|\sum_{j\geq s}G_j(\mathcal A_{j-s})\right\|_{L^1_v} \leq c\lambda 
\sum_{Q \in \mathcal G}|Q| \inf_{Q} M(v), $$
where $\inf_QM(v) = \inf_{x \in Q}M(v)(x)$. 
\end{lemma}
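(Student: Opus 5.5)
We prove the estimate by putting absolute values inside and using only the size and support of the kernels $L_j$; the oscillation $e^{iP}$ plays no role. By the triangle inequality and Fubini,
$$\left\|\sum_{j\geq s}G_j(\mathcal A_{j-s})\right\|_{L^1_v}\leq \sum_{j\geq s}\sum_{Q\in\mathcal G_{j-s}}\int |f_Q(y)|\left(\int |L_j(x-y)|v(x)\,dx\right)dy .$$
So everything reduces to bounding the inner integral for $y\in Q$, $Q\in \mathcal G_{j-s}$, by $c\inf_Q M(v)$. Once this is done, the right side is at most $c\sum_{j\geq s}\sum_{Q\in\mathcal G_{j-s}}\|f_Q\|_{L^1}\inf_QM(v)\leq c\lambda\sum_{Q\in\mathcal G}|Q|\inf_QM(v)$. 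Here we use that the families $\mathcal G_i$ are disjoint, so each $Q$ is counted once, via the unique $j=i+s$.

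For the key bound, fix $Q\in\mathcal G_{j-s}$, $y\in Q$ and any $z\in Q$. Since $\ell(Q)\leq 2^{j-s}\leq 2^{j-1}$ (as $s\geq1$), we have $|y-z|\leq \sqrt n\,2^{j}$. By the support and size hypotheses of Lemma 2, $|L_j(x-y)|\leq c_12^{-(j-6)n}$ and vanishes unless $|x-y|\leq 2^{j+6}$. Then
$$\int|L_j(x-y)|v(x)\,dx\leq c\,2^{-jn}\int_{|x-y|\leq 2^{j+6}}v(x)\,dx\leq c\,2^{-jn}\int_{|x-z|\leq 2^{j+6}+\sqrt n 2^j}v(x)\,dx\leq c\,M(v)(z).$$
Taking the infimum over $z\in Q$ gives the claim. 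The constant depends only on $c_1$ and $n$; the gradient bound is not needed.

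There is no real obstacle. The only points needing care are that $s\geq 1$ guarantees $\ell(Q)\lesssim 2^j$, so that an enlarged ball around $z$ contains the support of $L_j(\cdot-y)$, and the bookkeeping of the double sum over $j$ and $Q\in\mathcal G_{j-s}$. If $\mathcal A_i$ is only a formal sum, summability follows from $\sum_{Q\in\mathcal G}|Q|<\infty$ together with the bound $\inf_Q M(v)\leq |Q|^{-1}\int_Q M(v)$, or one may simply allow the right side to be infinite.
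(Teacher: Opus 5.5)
Your proof is correct and follows the same route as the paper's: move absolute values inside, use Fubini, bound $\int |L_j(x-y)|v(x)\,dx$ by $c\inf_Q M(v)$ for $y\in Q\in\mathcal G_{j-s}$, and sum using $\int|f_Q|\leq\lambda|Q|$. The paper states this in one line. You fill in the details it leaves implicit: the size and support of $L_j$, the bound $\ell(Q)\lesssim 2^j$ that places the support inside a ball of radius $\sim 2^j$ about any $z\in Q$, and the fact that each $Q$ lies in exactly one $\mathcal G_i$.
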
 
\begin{proof} We easily see that 
\begin{multline*}  
\left\|\sum_{j\geq s}
G_j\left(\mathcal A_{j-s}\right)\right\|_{L^1_v} \leq \sum_j
\int |\mathcal A_{j-s}(y)|\left(\int |L_j(x-y)|v(x)\,dx\right)\,dy
\\
\leq \sum_j\sum_{Q\in \mathcal G_{j-s}}\int |f_Q(y)|\inf_{z\in Q}M(v)(z)\,dy
 \leq c\sum_{Q \in \mathcal G} \lambda|Q|\inf_QM(v).
\end{multline*} 
\end{proof}

We prove Lemma 2 by the estimates of Lemma 8 and Lemma 9.  
We slightly modify the interpolation argument of \cite{9}. 

\begin{lemma}  Let $\mathcal F$ denote the family of dyadic cubes arising 
from the Calder\'{o}n-Zygmund decomposition in \S $4$. Define a set 
$E_\lambda^s$ as in Lemma $2$. 
Then, for all $t > 0$, we have  
\begin{equation}\label{tag 5.6}  
\int_{E_\lambda^s} \min(v(x),t)\, dx 
\leq c \sum_{Q \in \mathcal F}|Q| \min\left(t2^{-s},\inf_{Q} M(v)\right), 
\end{equation} 
where $s$ is a positive integer and $v$ is a locally integrable positive 
function.
\end{lemma}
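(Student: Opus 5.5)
We prove (5.6) by Vargas's interpolation with change of measure: for each cube $Q$ we decide whether to use the unweighted $L^2$ bound of Lemma 8 or the weighted $L^1$ bound of Lemma 9, according to which term in $\min(t2^{-s},\inf_Q M(v))$ is smaller. Fix $t>0$ and $s\ge 1$. Split $\mathcal F=\mathcal F'\cup\mathcal F''$, where
$$\mathcal F'=\{Q\in\mathcal F: t2^{-s}\le \inf_Q M(v)\},\qquad \mathcal F''=\mathcal F\setminus\mathcal F'.$$
For $i\ge 0$ let $B_i'$ be the sum of $b_Q$ over $Q\in\mathcal F'$ contributing to $B_i$, and define $B_i''$ likewise with $\mathcal F''$. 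Then $B_i=B_i'+B_i''$, so
$$E_\lambda^s\subset E'\cup E'', \qquad E'=\Bigl\{\Bigl|\sum_{j\ge s}G_j(B'_{j-s})\Bigr|>\lambda/2\Bigr\},\quad E''=\Bigl\{\Bigl|\sum_{j\ge s}G_j(B''_{j-s})\Bigr|>\lambda/2\Bigr\}.$$

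On $E'$ we use $\min(v,t)\le t$, Chebyshev's inequality in $L^2$, and Lemma 8 with $\mathcal B_i=c^{-1}B_i'$. Hypothesis (5.2) must be checked for all cubes of side $2^i$, not only dyadic ones. A cube $R$ of side $2^i$ meets at most $2^n$ dyadic cubes of side $2^i$, the cubes in $B_i'$ have side exactly $2^i$ (for $i\ge1$) or at most $1$ (for $i=0$), and each satisfies $\|b_Q\|_1\le c\lambda|Q|$. Hence $\int_R|B_i'|\le c\lambda|R|$. Lemma 8 then gives
$$\int_{E'}\min(v,t)\le t\,|E'|\le \frac{ct}{\lambda^2}\,\lambda 2^{-s}\sum_i\|B_i'\|_1\le c\,t2^{-s}\sum_{Q\in\mathcal F'}|Q|,$$
and on $\mathcal F'$ we have $t2^{-s}=\min(t2^{-s},\inf_QM(v))$, as required.

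On $E''$ we use $\min(v,t)\le v$, Chebyshev's inequality in $L^1_v$, and Lemma 9. Set $\mathcal G_i=\{Q\in\mathcal F'': Q\text{ contributes to }B_i\}$ and $f_Q=c^{-1}b_Q$. These families consist of non-overlapping dyadic cubes with $\ell(Q)\le 2^i$, and the cubes in different $\mathcal G_i$ are disjoint, since all cubes in $\mathcal F$ are non-overlapping. Also $\sum|Q|<\infty$ because $\sum|Q|\le\lambda^{-1}\|f\|_1$. Lemma 9 yields
$$\int_{E''}v\le \frac{2}{\lambda}\Bigl\|\sum_{j\ge s}G_j(B''_{j-s})\Bigr\|_{L^1_v}\le c\sum_{Q\in\mathcal F''}|Q|\inf_QM(v),$$
which is again the required minimum on $\mathcal F''$.

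Adding the two estimates proves (5.6). The main point needing care is the verification of (5.2) for the split family $\{B_i'\}$: the splitting must respect the grouping by cube size, and arbitrary (non-dyadic) cubes have to be handled by the bounded-overlap count above. Everything else is a direct combination of Lemmas 8 and 9.
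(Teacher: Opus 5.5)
Your proposal is correct and is essentially the paper's proof. Your $\mathcal F'$ and $\mathcal F''$ are the paper's $\mathcal F_t^*$ and $\mathcal F_t$. As in the paper, on the part built from cubes with $\inf_Q M(v)\ge t2^{-s}$ you bound $\min(v,t)\le t$ and apply Chebyshev with Lemma 8, and on the rest you bound $\min(v,t)\le v$ and apply Lemma 9.

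Your explicit check of hypothesis (5.2) and of the hypotheses of Lemma 9 is something the paper leaves implicit. Two small corrections there, neither of which affects more than constants:
\begin{itemize}
\item A closed cube of side $2^i$ can meet $3^n$ closed dyadic cubes of that size, not $2^n$. The count is $2^n$ only if you ignore intersections of measure zero.
\item For $i=0$ the cubes have side at most $1$ rather than exactly $1$. Use instead that the non-overlapping cubes of side $\le 1$ meeting $R$ all lie in $3R$.
\end{itemize}
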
 

\begin{proof} For $t>0$, set  
$\mathcal F_t = \{Q \in \mathcal F : \inf_Q M(v) < t2^{- s} \}$  
and $\mathcal F_t^* =  \mathcal F \setminus \mathcal F_t$.  Put    
$$B'_j =  \sum\limits_{\substack{\ell(Q)=2^j \\ Q \in \mathcal F_t}} b_Q,   
\quad 
 B^{\prime\prime}_j = \sum\limits_{\substack{\ell(Q)=2^j \\ Q \in 
\mathcal F_t^* }} b_Q 
 \ \ (j\geq 1); \qquad B'_0 = \sum\limits_{\substack{|Q|\leq 1 \\ Q \in 
\mathcal F_t}} b_Q, 
 \quad 
B^{\prime\prime}_0 = \sum\limits_{\substack{|Q|\leq 1 \\ Q \in \mathcal F_t^* 
}} b_Q.$$
Define  
$$ 
E'_\lambda=\left\{  \left|\sum_{j\geq s}
G_j\left(B'_{j-s}\right)\right|>\lambda\right\}, 
\quad   
E^{\prime\prime}_\lambda=\left\{ \left|\sum_{j\geq s}
G_j\left(B^{\prime\prime}_{j-s}\right)\right| >\lambda\right\}.
$$
  Then we find 
$E_\lambda^s \subset E'_{\lambda/2}\cup E^{\prime\prime}_{\lambda/2}$,
since $B_j = B'_j + B^{\prime\prime}_j$,  
and so 
\begin{align*}  
\int_{E_\lambda^s} \min(v(x),t)\, dx  
&\leq \int_{E'_{\lambda/2}}\min(v(x),t)\, dx 
+\int_{E^{\prime\prime}_{\lambda/2}} \min(v(x),t)\, dx  
\\
&\leq \int_{E'_{\lambda/2}} v(x)\, dx  +\int_{E^{\prime\prime}_{\lambda/2}}
t\, dx  =:  I + II.
\end{align*} 
\par 
By Lemma 9 with $\mathcal A_j = cB'_j$, we get
$$ I \leq c\sum_{Q \in \mathcal F_t}|Q| \inf_{Q} M(v) 
= c\sum_{Q \in \mathcal F_t}|Q| 
\min\left(t2^{-s},\inf_{Q} M(v)\right). $$
By Lemma 8 with $\mathcal B_j = cB^{\prime\prime}_j$, we have 
$$ 
 II \leq c t2^{- s}\sum_{Q \in \mathcal F_t^*}|Q| 
=  c \sum_{Q \in \mathcal F_t^*}|Q| \min\left(t2^{-s},\inf_{Q} 
M(v)\right). 
$$
Combining the estimates for $I$ and $II$, we conclude the proof of Lemma 10. 
\end{proof} 
\par  
Now we finish the proof of Lemma 2.  Multiplying both sides of the inequality 
(5.6) by $t^{-\theta}$ ($\theta \in (0, 1)$), then integrating them on 
$(0, \infty)$ with respect to the measure $dt/t$,  and using
$$\int_0^{\infty}\min(A,t)t^{-\theta}\, \frac{dt}{t} = c_{\theta}A^{1-\theta} 
\quad (A > 0),$$
we get 
\begin{multline} \label{tag 5.7}  
\int_{E_\lambda^s} v(x)^{1-\theta}\, dx 
\leq c \sum_{Q \in \mathcal F}|Q|2^{-\theta s} \inf_{Q} M(v)^{1-\theta}
 \\
\leq  c\lambda^{-1} 2^{-\theta s}\sum_{Q \in \mathcal F}\inf_{Q} M(v)^{1-\theta}  
\int_Q|f(x)|\, dx 
\leq  c\lambda^{-1} 2^{-\theta s}\int |f(x)|M(v)(x)^{1-\theta}\, dx,
\end{multline}  
where the second inequality follows from (4.2).  
\par 
If $w \in A_1$, then $w^{1+\delta} \in A_1$ for some $\delta > 0$; 
so substituting  $w^{1+\delta}$ for $v$ and taking $\theta$ such that 
$1 - \theta = (1+\delta)^{-1}$ in (5.7), we get 
\begin{equation}\label{tag 5.8}  
 w\left(E_{\lambda}^{s}\right) 
\leq c\lambda^{-1}2^{- s\delta/(1+\delta)}\|f\|_{L^1_w}. 
\end{equation} 
Checking the constants appearing in the proof of (5.8) and replacing 
$L_j$ by $c2^{\eta s}L_j$, we get the desired estimate of Lemma 2. 

\section{Proof of Lemma $6$}

Our proof is an application of the method for the proof of 
\cite[{\sc Lemma} 4.1]{1}.   
We use some tools and results given in \cite{1}.

\begin{definition}  Suppose  $n \geq 2$.   Let 
$$S_m = \left\{Q_m + (0, 0, \dots , 0, j) : j \in \bold Z \right\},$$
where $m = (m_1, m_2, \dots , m_{n-1}) \in \bold Z^{n-1}$ and  $Q_m = [0, 1]^n 
+ (m_1, m_2, \dots , m_{n-1}, 0)$.   
We call $S_m$ a strip. 
\end{definition} 

\begin{definition} Suppose  $n \geq 2$. For $m \in \bold Z^{n-1}$, 
we define   
$$I_m = \left\{Q_m + (0, 0, \dots , 0, j) :  j_1 <  j < j_2 \right\}, $$
where $j_1$, $j_2 \in \bold Z \cup \{-\infty, \infty \}$ and $Q_m$ is as in 
Definition 5.  We call $I_m$ an interval.
\end{definition}

\begin{definition} For a set $E \subset \bold R^n$, we put
 $$\mathcal N (E) = \left\{x \in \bold R^n : d(x,E) \leq 1 \right\}.$$
\end{definition} 

Let $P$ be a polynomial of degree $N$ as in Lemma 6. We consider 
$\mathcal R(P,\beta)$ for $\beta > 0$ (see Definition 2). 

\begin{lemma} Suppose that $n \geq 2$ and $N \geq 1$.  
There exists a positive integer $C_{n,N}$ depending only on $n$ and $N$ such that for $i = 1, 2, \dots , C_{n,N}$ we can find $U_i \in O(n)$ $($the orthogonal 
group$)$ and families of cubes $J_{m,i} \subset S_m  \, (m \in \bold Z^{n-1})$ 
so that 
\begin{enumerate} 
\item[(1)]    
$\mathcal N(\mathcal R(P,\beta)) \subset \bigcup_{i=1}^{C_{n,N}}U_i(\mathcal L_i),$    where 
$$\mathcal L_i = \cup \left\{Q : Q \in \bigcup_{m\in \text{${\bold Z}^{n-1}$}}
J_{m,i} \right\};$$ 
\item[(2)]   
$\card (J_{m,i}) \leq c$   
for some constant $c$ depending only on $n$, $N$ and $\beta$.
\end{enumerate}
\end{lemma}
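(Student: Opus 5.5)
We cover the unit neighbourhood of the sublevel set $\mathcal R(P,\beta)$ by finitely many rotated families of unit cubes, with a bounded number of cubes in each vertical strip. The tool is the one-variable fact that a real polynomial of degree $N$ in $t$, with leading coefficient bounded below, has sublevel set $\{t : |p(t)|\le \beta'\}$ contained in at most $N$ intervals of total length $\le c(N)(\beta'/|\text{lead coeff}|)^{1/N}$.

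\emph{Choosing the rotations.} Let $P_N$ be the top homogeneous part, with $\|P\|=1$. By compactness of the space of unit-norm homogeneous polynomials of degree $N$ (this is the argument of Sublemma 2 in \S 6, as in \cite[p.\ 151]{1}), I expect finitely many rotations $U_1,\dots,U_{C_{n,N}}\in O(n)$ and a constant $\kappa=\kappa(n,N)>0$ with the following property. For every such $P$ there is an $i$ such that $P\circ U_i$, viewed as a polynomial in $x_n$, has coefficient of $x_n^N$ of absolute value at least $\kappa$. Since that coefficient is $P_N(U_i e_n)$, this says $\max_i |P_N(U_ie_n)|\ge\kappa$.

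To get it, note that $\xi\mapsto P_N(\xi)$ is not identically zero on $S^{n-1}$. Then $\sup_{\xi\in S^{n-1}}|P_N(\xi)|$ is a norm on the finite-dimensional space of homogeneous polynomials of degree $N$, hence comparable to $\|P_N\|=1$. Choose a finite $\delta$-net $\{\xi_i\}$ of $S^{n-1}$ with $\delta$ small relative to the uniform Lipschitz bound of unit-norm $P_N$ on the sphere. Then $U_i$ is any rotation with $U_ie_n=\xi_i$. In fact one can simply take a single $U_i$ for every $P$ admitting it, and $\mathcal L_i$ empty otherwise.

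\emph{Covering by cubes in strips.} Fix $P$ and such an $i$, and set $\tilde P=P\circ U_i$. Let $x\in\mathcal N(\mathcal R(P,\beta))$, so $|x-z|\le1$ for some $z$ with $|P(z)|\le\beta$. I claim it suffices to cover $U_i^{-1}\mathcal R(P,\beta)$ and then enlarge by neighbouring cubes. Each unit cube of $S_m$ is replaced by it and its $3^n$ neighbours (after slightly fattening), which only multiplies the card bound and the number of families by a constant.

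Fix a strip $S_m$, so $x'=(x_1,\dots,x_{n-1})$ ranges over the unit cube $Q'_m$ in $\bold R^{n-1}$. For each such $x'$, the function $t\mapsto\tilde P(x',t)$ is a polynomial of degree $N$ with leading coefficient of size $\ge\kappa$. Hence $\{t:|\tilde P(x',t)|\le\beta\}$ is a union of at most $N$ intervals of total length $\le c(\beta/\kappa)^{1/N}$, so it meets at most $c(N,\beta,\kappa)$ unit intervals $[j,j+1]$.

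\emph{Main obstacle.} The difficulty is uniformity in $x'$ across the cube $Q'_m$: the root locations move with $x'$, possibly far, so a per-$x'$ count does not directly bound the number of cubes in the strip. I would handle this as in \cite[Lemma 4.1]{1}. The set $\{(x',t)\in Q'_m\times\bold R:|\tilde P|\le\beta\}$ is semialgebraic of bounded complexity, so its projection onto the $t$-axis has a bounded number of connected components. Each component is an interval, and on it one needs control of length. If length control fails, I would instead refine $Q'_m$ into subcubes of side $\epsilon(n,N,\beta)$; the index count then gains only a constant factor, and the $x_n$-slices vary controllably on each subcube, the roots lying in a bounded number of short intervals by continuity estimates with Bezout-type bounds.

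If that also proves delicate, I would fall back on a volume-plus-connectivity count. Each connected component of the set above restricted to the strip meets at most boundedly many cubes. The reason is that its slice lengths are bounded, and it is a subset of a bounded-degree algebraic region whose unit-cube count over a unit base is controlled by integral geometry (Crofton-type counts of intersections of lines with the zero sets of $\tilde P\pm\beta$). Collecting the resulting cubes into $J_{m,i}$ gives both (1) and (2).
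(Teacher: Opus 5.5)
\textbf{There is a genuine gap.} It sits exactly at the step you call the main obstacle, and your choice of rotations cannot be repaired there. A lower bound on the coefficient of $x_n^N$ in $P\circ U_i$ does not bound the number of cubes per strip.

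Take $n=2$, $N=2$ and $P(x)=x_1^2+x_2^2-R^2$ with $R$ a large integer. Then $\|P\|=1$ and $P\circ U=P$ for every $U\in O(2)$, so the $x_2^2$-coefficient is $1$ in every frame. Every vertical slice of $\mathcal R(P,\beta)$ meets only boundedly many unit intervals. Yet suppose $\mathcal N(\mathcal R(P,\beta))\subset U(\mathcal L)$ for a single $U$. Then $\mathcal L\supset U^{-1}\mathcal R(P,\beta)=\mathcal R(P,\beta)$, which contains the arc of $|x|=R$ over $R-1<x_1<R$. That arc has height $\sqrt{2R-1}$ and can only be covered by cubes of the strip with $m=R-1$, so $\card(J_{R-1})\gtrsim\sqrt R$. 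Hence your remark that one $U_i$ per $P$ suffices is false.

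None of your fallbacks rescues this:
\begin{enumerate}
\item[(a)] The projection of the strip piece to the $t$-axis is a single interval of length about $2\sqrt{2R}$.
\item[(b)] Refining $Q'_m$ does not help: over $R-\epsilon\le x_1\le R$ the root $t=\sqrt{R^2-x_1^2}$ still sweeps a length about $\sqrt{2R\epsilon}$, because roots are not uniformly Lipschitz in $x'$ near a tangency.
\item[(c)] Short slices plus connectedness do not bound how many cubes a component visits, and Crofton-type formulas control only averages over lines, not each individual strip.
\end{enumerate}
The obstruction is tangency. Where $\nabla P$ is nearly orthogonal to $U_ie_n$, the sublevel set is vertical in that frame. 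So the $C_{n,N}$ frames must be used simultaneously, each on a different piece of $\mathcal R(P,\beta)$.

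\textbf{What is missing} is a decomposition of $\mathcal R(P,\beta)$ by the direction of the gradient, combined with induction on $N$. The paper's argument runs as follows.
\begin{enumerate}
\item[(i)] Choose $\Theta$ by Sublemma 3 so that every $D_j(P\circ\Theta)$ has norm comparable to $\|P\|$.
\item[(ii)] The part of $\mathcal R(P\circ\Theta,\beta)$ where some $|D_j(P\circ\Theta)|\le\beta$ lies in sublevel sets of polynomials of degree $N-1$, so it is covered by the induction hypothesis.
\item[(iii)] On each remaining piece $E_\kappa$, $\kappa\in\{-1,1\}^n$, one has $\kappa_jD_j(P\circ\Theta)>\beta$ for all $j$. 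Rotate by $O_\kappa$ with $O_\kappa e_n=n^{-1/2}\kappa$. Discard the unit cubes meeting sublevel sets of the derivatives, again using induction together with Remark 1.
\item[(iv)] In each strip the surviving cubes form at most $C_{n,N}$ intervals (Sublemma 1), and on each interval every $D_j$ keeps its sign.
\item[(v)] By Sublemma 4, moving $c_n$ units up such an interval increases $P\circ\Theta\circ O_\kappa$ by more than $3\beta$. Hence each interval contains at most $c_n+3$ cubes.
\end{enumerate}
What yields (2) is this strict monotonicity along a direction transverse to the level sets on each piece, not a lower bound on a leading coefficient. For the circle, it amounts to covering the parts near the axes by the induction (they lie in slabs $|x_j|\le\beta/2$). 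The quarter-arcs in between are covered by strips rotated by $45^\circ$.
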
     

\begin{remark} If Lemma 11 holds, then we have, for any $ \gamma > 0$, 
$$ \{x : d(x,\mathcal R(P,\beta)) \leq \gamma \} \subset \bigcup_{i=1}^{C_{n,N,\gamma}}U_i(\mathcal L_i)$$ 
for some positive integer $C_{n,N,\gamma}$ depending only on $n$, $N$ and 
$\gamma$, where $U_i$ and $\mathcal L_i$ are as in Lemma 11.  
This can be proved by 
considering a finite number of polynomials which are defined by translating  
$P$ and by applying Lemma 11 to each of them. 
(See \cite[p.\ 149]{1}.) 
\end{remark} 

To prove Lemma 11, we need the following results given in  \cite{1}.

\begin{sublemma} Suppose $n \geq 2$. 
For any positive integer $N$, there exists a positive integer $C_{n,N}$ depending only on $n$ and $N$ such that for any strip $S$, any polynomial $P$ of degree $N$ and any $\gamma > 0$
$$\{Q \in S : Q \cap \mathcal R(P,\gamma) \ne \emptyset \}$$
is a union of at most $C_{n,N}$ intervals. {\rm (See {\sc Lemma} 4.2 of 
\cite{1}.)}  
\end{sublemma}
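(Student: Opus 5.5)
We need to prove Sublemma 1: for a strip $S$ (a column of unit cubes $Q_m + (0,\dots,0,j)$, $j\in\bold Z$), the set of cubes meeting $\mathcal R(P,\gamma)=\{|P|\le\gamma\}$ is a union of at most $C_{n,N}$ intervals (runs of consecutive cubes). Equivalently, along the column index $j$, the indicator $\chi(j)=1$ iff $Q_j\cap\{|P|\le\gamma\}\ne\emptyset$ has at most $C_{n,N}$ runs of ones. So it suffices to bound the number of \emph{gaps}, that is, maximal runs of cubes on which $|P|>\gamma$, which are flanked above and below by cubes meeting the sublevel set.

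The plan is to pass to the closed solid cylinder $\bar S = F\times\bold R$, where $F=[m_1,m_1+1]\times\dots\times[m_{n-1},m_{n-1}+1]$, and study the semialgebraic set $A=\bar S\cap\{|P|\le\gamma\}$ together with its projection $\pi(A)\subset\bold R$ to the $x_n$-coordinate. The cube $Q_j$ meets $A$ iff $\pi(A)\cap[j,j+1]\ne\emptyset$. If $\pi(A)$ is a union of at most $C$ closed intervals, then the set of $j$ with $[j,j+1]\cap\pi(A)\ne\emptyset$ is a union of at most $C$ runs, because each interval of $\pi(A)$ meets a consecutive block of unit intervals. Hence the whole problem reduces to showing that $\pi(A)$ has at most $C_{n,N}$ connected components.

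For that we use uniform bounds on the topology of semialgebraic sets. The set $A$ is defined by the $2n-2$ linear inequalities describing $F$ and the two inequalities $P\le\gamma$, $-P\le\gamma$, all of degree at most $N$, in $n$ variables. By the Milnor--Thom/Oleinik--Petrovskii bound (or its refinement by Basu--Pollack--Roy), the number of connected components of such a set is at most $C(n,N)$, independent of the coefficients and of $\gamma$. Projection is continuous, so each component of $A$ maps onto a connected subset of $\bold R$, namely an interval. Thus $\pi(A)$ is a union of at most $C(n,N)$ intervals, and these intervals are closed because $A\cap\{|x_n|\le T\}$ is compact for every $T$. Since only the number of runs of $j$ matters, closedness is not essential anyway.

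The one step requiring care is that the component bound must be uniform over all polynomials of degree $N$, all $\gamma>0$ and all strips. This holds because the Milnor--Thom type bounds depend only on the number of variables, the number of defining polynomials and their degrees. If one wants to avoid that machinery, an alternative is induction on $n$ in the style of Chanillo--Christ:
\begin{enumerate}
\item cover the boundary of $A$ by the hypersurfaces $P=\pm\gamma$ and the faces of $\bar S$;
\item observe that the local extrema of $x_n$ on $A$ occur at critical points of $x_n$ restricted to these algebraic sets;
\item count those critical points by B\'ezout, giving a bound of order $N^{n}$ with a combinatorial factor.
\end{enumerate}
I expect this degeneracy bookkeeping (non-isolated critical sets, handled by a generic perturbation of $\gamma$ or of $P$, which is harmless because the count is stable under small closed perturbations) to be the main obstacle in a self-contained version. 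Citing the Milnor--Thom bound avoids it and is the route I would take.
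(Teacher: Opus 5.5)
Your proof is correct. It takes a genuinely different route in the sense that the paper gives no argument for Sublemma 1 at all: like Sublemmas 2 and 3, it is imported from Chanillo--Christ (their Lemma 4.2).

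Your reduction is right. Every cube of the strip lies in $F\times\mathbf R$, so $F\times[j,j+1]$ meets $\mathcal R(P,\gamma)$ if and only if $[j,j+1]$ meets $\pi(A)$. The image of each connected component of $A$ is an interval of $\mathbf R$ and accounts for one run of consecutive $j$'s. Hence the number of intervals in the statement is at most the number of components of $A$. Since $A$ is cut out by $2n$ polynomial inequalities of degree at most $N$ in $n$ variables, the Milnor--Thom (or Basu--Pollack--Roy) bound gives a constant depending only on $n$ and $N$. That constant is uniform in the coefficients of $P$, in $\gamma$ and in the strip.

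If the version of the bound you quote is stated only for compact sets, a one-line fix covers it. Any $K$ distinct components of $A$ all meet $B(0,T)$ once $T$ is large, so $A\cap B(0,T)$ then has at least $K$ components. A bound for this compact set, which costs one extra quadratic inequality, therefore suffices.

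Your approach makes explicit that Sublemma 1 is simply a bound on the number of connected components of a semialgebraic set of bounded complexity. By contrast, the arguments the paper actually carries out in \S 6 (the proof of Lemma 11 and Sublemma 4) are elementary, using the intermediate value theorem and induction on the degree, and treat Sublemma 1 as a black box. The price of your route is dependence on a nontrivial theorem of real algebraic geometry. Your fallback via critical points of $x_n$ and B\'ezout would need exactly the degeneracy bookkeeping you anticipate, so quoting the component bound is the sensible choice.
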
 

\begin{sublemma} Suppose  $n \geq 2$.  
For any positive integer $N$, there exist positive  constants 
 $A_{n,N}$ and $B_{n,N}$  depending only on $n$ and $N$ such that 
$$A_{n,N}\|P\| \leq \|P\circ\Xi\| \leq B_{n,N}\|P\|$$
for all polynomial $P$ of degree $N$ and all  $\Xi \in O(n)$, where $P\circ\Xi (x) = P(\Xi x)$.
\end{sublemma}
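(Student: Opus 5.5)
The plan is to reduce the statement to the top-degree homogeneous part of $P$ and then compare two norms on a finite-dimensional space, one of which is rotation invariant.

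\emph{Reduction to the top-degree part.} Write $P = P_N + P'$, where $P_N(x)=\sum_{|\alpha|=N}a_\alpha x^\alpha$ is the homogeneous part of degree $N$ and $\deg P' < N$. Since $\Xi$ is linear, $P'\circ\Xi$ still has degree $<N$, and $P_N\circ\Xi$ is homogeneous of degree $N$. Moreover $P_N\circ\Xi\neq 0$, because $\Xi$ is invertible and $P_N\neq 0$. Hence $P\circ\Xi$ has degree exactly $N$, its top-degree part is $P_N\circ\Xi$, and
$$\|P\circ\Xi\| = \|P_N\circ\Xi\|, \qquad \|P\| = \|P_N\|.$$
So it suffices to prove the inequalities for homogeneous polynomials of degree $N$.

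\emph{Two equivalent norms.} Let $\mathcal H_N$ be the finite-dimensional real vector space of homogeneous polynomials of degree $N$ on $\bold R^n$. On $\mathcal H_N$, the quantity $\|H\|=\max_{|\alpha|=N}|a_\alpha|$ is a norm (the $\ell^\infty$ norm of the coefficients). Set
$$|||H||| = \sup_{x\in S^{n-1}}|H(x)|.$$
This is also a norm on $\mathcal H_N$. The only nontrivial point is definiteness: if $H$ vanishes on $S^{n-1}$, then by homogeneity $H(x)=|x|^N H(x/|x|)=0$ for all $x\neq 0$, so $H\equiv 0$. Since all norms on a finite-dimensional space are equivalent, there are constants $0<a\le b$, depending only on $n$ and $N$, with
$$a\|H\|\le |||H|||\le b\|H\| \qquad \text{for all } H\in\mathcal H_N.$$

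\emph{Rotation invariance and conclusion.} Because $\Xi$ maps $S^{n-1}$ onto itself, $|||H\circ\Xi||| = |||H|||$ for every $\Xi\in O(n)$. Therefore
$$\|P_N\circ\Xi\| \le a^{-1}|||P_N\circ\Xi||| = a^{-1}|||P_N||| \le (b/a)\|P_N\|.$$
Applying the same bound with $\Xi^{-1}$ to the polynomial $P_N\circ\Xi$ gives
$$\|P_N\| \le (b/a)\|P_N\circ\Xi\|.$$
So the statement holds with $A_{n,N}=a/b$ and $B_{n,N}=b/a$.

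There is no real obstacle. The only points needing care are that the degree is preserved under $\Xi$ (so that $\|P\circ\Xi\|$ refers to the same degree $N$) and that the sphere supremum is genuinely a norm; both are settled above.
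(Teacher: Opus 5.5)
Your proof is correct and complete. The reduction to the top-degree part is justified: the substitution $x\mapsto \Xi x$ preserves homogeneity, cannot annihilate $P_N$ because $\Xi$ is invertible, and cannot raise the degree of $P'$. The quantity $\sup_{S^{n-1}}|H|$ is a genuine norm on $\mathcal H_N$ by homogeneity. Its exact $O(n)$-invariance then transfers the norm-equivalence constants to $\|\cdot\|$, giving $A_{n,N}=a/b$ and $B_{n,N}=b/a$.

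There is nothing in the paper to compare against. The paper gives no proof of this sublemma; it quotes it, together with Sublemmas 1 and 3, as a tool taken from Chanillo--Christ. Your argument supplies a step the paper takes as known.

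The other standard route is a compactness argument. The map $(\Xi,H)\mapsto \|H\circ\Xi\|$ is continuous and nonvanishing on the compact set $O(n)\times\{H\in\mathcal H_N:\|H\|=1\}$, so it is bounded above and below there. Your version uses a rotation-invariant norm instead of compactness of $O(n)$. This makes the two-sided bound immediate and shows that the hypothesis $n\geq 2$ plays no role.
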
 

\begin{sublemma} Suppose $n \geq 2$.  
For any positive integer $N$, there exists a positive constant $C_{n,N}$ depending only on $n$ and $N$ such that for any polynomial $P$ of degree $N$ we can find  $\Theta \in O(n)$ so that 
$$\min_{1\leq j \leq n}\|D_j(P\circ\Theta)\| \geq  C_{n,N}\|P\circ\Theta\|,$$
where $D_j = \partial / \partial x_j$.
\end{sublemma}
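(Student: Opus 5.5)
The statement to prove is Sublemma 3: after a suitable rotation $\Theta$, every partial derivative $D_j(P\circ\Theta)$ has top-degree coefficients comparable to those of $P\circ\Theta$. The plan is a compactness argument on the top homogeneous part. Only the degree-$N$ homogeneous part $P_N$ of $P$ matters. Composition with $\Theta$ preserves homogeneity, and $D_j$ lowers the degree by exactly one unless the top part is annihilated. So $\|P\circ\Theta\|$ and $\|D_j(P\circ\Theta)\|$ depend only on $P_N\circ\Theta$, where $\|D_j(P\circ\Theta)\|$ is read as the max of the degree-$(N-1)$ coefficients, interpreted as $0$ if the top part is killed. Both sides are also homogeneous of degree one in $P_N$. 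Hence we may assume $P$ is homogeneous of degree $N$ with $\|P\|=1$.

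Let $\mathcal H$ denote the compact set of homogeneous degree-$N$ polynomials with $\|P\|=1$ (a sphere in sup-norm of coefficients). For $P\in\mathcal H$ and $\Theta\in O(n)$, set
$$F(P,\Theta)=\min_{1\leq j\leq n}\frac{\|D_j(P\circ\Theta)\|}{\|P\circ\Theta\|},$$
with the degree-$(N-1)$ norm in the numerator. This is continuous, since by Sublemma 2 the denominator is bounded below by $A_{n,N}>0$. Define $g(P)=\sup_{\Theta}F(P,\Theta)$. Since $g$ is a supremum of continuous functions, it is lower semicontinuous, and a lower semicontinuous function on the compact set $\mathcal H$ attains its infimum. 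So it suffices to show $g(P)>0$ for each fixed $P\in\mathcal H$. Then $C_{n,N}=\tfrac12\min_{\mathcal H}g$ works, with $\Theta$ chosen to nearly achieve the supremum.

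For fixed $P\neq0$ homogeneous of degree $N$, the degree-$(N-1)$ part of $D_j(P\circ\Theta)$ equals $D_j(P\circ\Theta)$ itself, and it is nonzero iff $P\circ\Theta$ genuinely depends on $x_j$. Equivalently, it is nonzero iff $\langle \nabla P(\Theta x),\Theta e_j\rangle\not\equiv0$, i.e.\ iff the directional derivative $\partial_{\Theta e_j}P\not\equiv0$. The set $Z=\{v\in S^{n-1}:\partial_vP\equiv0\}$ is the unit sphere of a proper linear subspace $W$. It is proper because $P$ is nonconstant, using $N\geq1$ and Euler's identity $\sum x_i\partial_iP=NP\neq0$. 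So we need an orthonormal basis $\Theta e_1,\dots,\Theta e_n$ avoiding $W$. Such a basis exists: the set of $\Theta$ with some $\Theta e_j\in W$ is a finite union of proper algebraic subsets of $O(n)$, hence nowhere dense. For example, rotate the standard basis generically so that no vector lies in the hyperplane containing $W$. For such $\Theta$, every $D_j(P\circ\Theta)\neq0$, so $F(P,\Theta)>0$ and $g(P)>0$.

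The main subtlety is the semicontinuity/compactness step. One must make sure that $\|D_j(P\circ\Theta)\|$ is continuous in $(P,\Theta)$ on the homogeneous class. It is, because the coefficients are polynomial in the entries of $\Theta$ and in the coefficients of $P$, and a max of absolute values of continuous functions is continuous. The reduction from general $P$ to its top homogeneous part also needs $\|\cdot\|$ to be understood as the norm of the top-degree part, consistent with Definition 1.
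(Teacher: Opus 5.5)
Your proof is correct. The paper itself does not prove this sublemma: it is one of the three auxiliary facts imported from Chanillo--Christ, so there is no in-paper argument to compare with, and yours works as a self-contained replacement.

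The structure is sound:
\begin{itemize}
\item Reduce to the top homogeneous part, using $(P\circ\Theta)_N=P_N\circ\Theta$ and the fact that $D_j$ applied to the lower-order terms has degree at most $N-2$.
\item Note that the scale-invariant ratio $F(P,\Theta)$ is jointly continuous on the compact set $\mathcal H\times O(n)$.
\item Reduce to the pointwise fact that the linear map $v\mapsto\partial_vP$ has a proper kernel $W$, and that an orthonormal basis can avoid $W$.
\end{itemize}

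Two minor points. First, the explicit basis choice you mention is the cleanest. Take $u\neq0$ normal to a hyperplane containing $W$, and choose $\Theta$ so that $\Theta^{T}u$ has all coordinates nonzero. Then $\langle\Theta e_j,u\rangle\neq0$ for every $j$. Your phrase ``proper algebraic subsets, hence nowhere dense'' would otherwise need a word about the two components of $O(n)$. Second, since $F$ is jointly continuous and $O(n)$ is compact, $g$ is actually continuous and the supremum is attained, so the factor $\tfrac12$ is unnecessary.

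Your approach gives self-containedness at the cost of a non-explicit constant $C_{n,N}$. It also pins down how $\|D_j(P\circ\Theta)\|$ should be read: as the size of the degree-$(N-1)$ part. Your conclusion forces $D_j(P\circ\Theta)$ to have exact degree $N-1$, which is what the proof of Lemma 11 needs when it applies $A(N-1)$ to $C_j^{-1}D_j(P\circ\Theta)$.
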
 

Now we prove  Lemma 11.  We use induction on the polynomial degree $N$. Let 
$A(N)$ be the assertion of Lemma 11 for polynomials of degree $N$.  

\par 
Proof of $A(1)$.  Let $P(x) = \sum_{i=1}^na_ix_i + b$. 
First, we consider the case $|a_n| = 1$.  Now we show that if $I$ is an interval such that each cube of $I$ intersects $\mathcal R(P,\beta)$,  then $\text{card} (I) \leq c$ for some $c$ depending only on $n$ and $\beta$.  
Let $y \in Q \in I$ satisfy  $|P(y)| \leq \beta$.  We note that  
$$P(y + de_n) - P(y) = da_n \quad \text{for}\quad d \in \bold R,$$
where $e_j$ is the element of $\bold R^n$ whose jth coordinate is $1$ and whose other coordinates are all $0$.  Therefore, if $y + de_n \in Q' \in I$, we see 
that
$$
\inf_{z\in Q'}|P(z)| \geq |P(y + de_n)| - \sum_{i=1}^n|a_i|    
\geq |da_n| - \beta - \sum_{i=1}^n|a_i|  
\geq |d| - \beta - n.
$$
This easily implies that $\card (I) \leq c$.

By this and Sublemma 1, there exists a constant $c$ depending only on $n$ and $\beta$ such that 
$$\card \left( \left\{Q \in S : Q \cap \mathcal R(P,\beta) \ne \emptyset 
\right\}\right)  \leq c$$   for all strips $S$.  

Therefore, if we put 
$$J_m = \left\{Q \in S_m : d(Q,\mathcal R(P,\beta)) \leq 1  \right\},$$
then $\card (J_m) \leq c$ for some $c$ depending only on $n$ and 
$\beta$ ; and $\mathcal N(\mathcal R(P,\beta)) \subset \mathcal L$, where 
$$\mathcal L = \cup \left\{Q : Q \in \bigcup_{m\in \bold Z^{n-1}}J_{m}\right\}.
$$

Next, we consider any polynomial $P$ of degree $1$ such that $\|P\| = 1$.
Then if $P_1(x) =P(Ux)$ for a suitable $U \in O(n)$, we have $D_nP_1 = 1$. Hence, by what we have already proved we get  $\mathcal N(\mathcal R(P_1,\beta)) \subset \mathcal L$.  It follows that $\mathcal N(\mathcal R(P,\beta)) \subset 
U(\mathcal L)$
since $\mathcal N(\mathcal R(P\circ U,\beta)) = 
U^{-1}\mathcal N(\mathcal R(P,\beta))$.  This completes the proof of $A(1)$. 

Now we assume $A(N-1)$   ($N \geq 2$) and prove $A(N)$.    
For a polynomial $P$ of degree $N$ such that $\|P\| = 1$, we take $\Theta \in 
O(n)$ as in Sublemma 3.  Put
$$E_0 =  \mathcal R(P\circ\Theta,\beta)\cap \left(\bigcup_{j=1}^n
\mathcal R(D_j(P\circ\Theta),\beta)\right) ;$$
and for $\kappa =(\kappa_1, \kappa_2, \dots , \kappa_n) \in \{-1, 1\}^n$  put
$$E_{\kappa} = \left\{x \in \mathcal R(P\circ\Theta,\beta) : \kappa_jD_j(P\circ
\Theta)(x) > \beta \quad \text{for}\quad j = 1, 2, \dots , n \right\}.$$
Then
$$\mathcal R(P\circ\Theta,\beta) = E_0 \cup \left(
\bigcup_{\kappa \in \{-1, 1\}^n}E_{\kappa}\right)
 $$
and so 
\begin{equation}\label{tag 6.1} 
\mathcal N(\mathcal R(P\circ\Theta,\beta)) = \mathcal N(E_0) \cup \left(\bigcup_{\kappa \in \{-1, 1\}^n}\mathcal N(E_{\kappa})\right). 
\end{equation}
We separately treat the $2^n +1$ sets of the right hand side. 

First, clearly 
\begin{equation} \label{tag 6.2}  
\mathcal N\left(E_0\right) \subset \bigcup_{j=1}^n \mathcal N\left(\mathcal R\left(D_j
(P\circ\Theta),\beta\right)\right).
\end{equation} 
Since $C_j = \|D_j(P\circ\Theta)\| \sim 1$ ( this means that $c^{-1} \leq \|D_j(P\circ\Theta)\| \leq c$ for some $c > 1$ depending only on $n$ and $N$) and 
$\mathcal R(D_j(P\circ\Theta),\beta) = 
\mathcal R(C_j^{-1}D_j(P\circ\Theta),C_j^{-1}\beta),$   we can apply the 
induction hypothesis $A(N-1)$ to the right hand side of (6.2). 

Next, we fix $\kappa$ and consider $\mathcal N(E_{\kappa})$.  Take $O_{\kappa} 
\in O(n)$ such that 
$O_{\kappa}(e_n) = n^{-1/2}\kappa$.  Define
$$\mathcal D_0^* = \mathcal D_0 \setminus \left\{Q \in \mathcal D_0 : \left(\bigcup_{j=1}^n
\mathcal R((D_j(P\circ\Theta))\circ O_{\kappa},\beta)\right) \cap Q \ne 
\emptyset \right\}.$$

Since $\|(D_j(P\circ\Theta))\circ O_{\kappa}\| \sim 1$ by Sublemmas 2 and 3, we can apply the hypothesis $A(N-1)$ along with Remark 1 to
$$G = \cup \left\{Q \in \mathcal D_0 : \left(\bigcup_{j=1}^n\mathcal R((D_j(P\circ\Theta))\circ O_{\kappa},\beta)\right) \cap Q \ne \emptyset \right\}$$
to get 
\begin{equation} \label{tag 6.3}  
\mathcal N(G) \subset \cup_i U_i'(\mathcal L_i') 
\end{equation} 
for some $U_i' \in O(n)$ and for some $\mathcal L_i'$ such that  
$$\mathcal L_i' = \cup \left\{Q : Q \in \bigcup_{m\in \text{${\bold Z}^{n-1}$}}
J'_{m,i} \right\}$$ 
for some $J'_{m,i}$ ($\subset S_m$) satisfying $\card (J'_{m,i}) \leq c$.
\par 
Therefore we have only to consider $O_{\kappa}^{-1}(E_{\kappa})\cap \left(\cup 
\mathcal D_0^*\right)$.  First, we note that if $O_{\kappa}^{-1}(E_{\kappa})$ intersects 
$Q$,  $Q \in  \mathcal D_0^*$, then  
\begin{equation} \label{tag 6.4} 
\min_{1\leq j \leq n} \kappa_jD_j(P\circ\Theta)(O_{\kappa}y) > \beta \quad \text{for all} \quad y \in Q.  
\end{equation} 

\par 
This can be seen as follows. Suppose that there are $j_0 $ and $y_0 \in Q$ such that  $\kappa_{j_0}D_{j_0}(P\circ\Theta)(O_{\kappa}y_0) \leq \beta$. Then, since we have   $\kappa_{j_0}D_{j_0}(P\circ\Theta)(O_{\kappa}x) > \beta$ for some $x \in Q$, by the intermediate value theorem we can find $z \in Q$ such that $|D_{j_0}(P\circ\Theta)(O_{\kappa}z)| \leq \beta$.  This contradicts the fact that $Q \in \mathcal D_0^*$.
\par 
By (6.4) we have
\begin{multline} \label{tag 6.5} 
O_{\kappa}^{-1}(E_{\kappa})\cap \left(\cup \mathcal D_0^*\right) \subset \cup 
\biggm\{Q \in \mathcal D_0 : \min_{1\leq j \leq n} \kappa_jD_j(P\circ\Theta)(O_{\kappa}y) > \beta 
\\ 
\quad \text{for all} \quad y \in Q 
\quad \text{and} \quad \mathcal R(P\circ\Theta\circ O_{\kappa},\beta) \cap Q 
\ne \emptyset \biggl\}.
\end{multline}
 \par 
For a strip $S$,  put
\begin{multline*}   
\mathcal E = \biggm\{Q \in  S : \min_{1\leq j \leq n} \kappa_jD_j(P\circ\Theta)(O_{\kappa}y) > \beta \quad \text{for all} \quad y \in Q 
\\
\quad \text{and} \quad \mathcal R(P\circ\Theta\circ O_{\kappa},\beta) \cap Q \ne \emptyset \biggl\}. 
\end{multline*} 
We shall show $\card (\mathcal E) \leq C_{n,N}$.
\par 
We first see that $\mathcal E$ is a union of at most $C_{n,N}$ intervals.
   Put 
\begin{multline*}  
\mathcal E' = \biggm\{Q \in S : \min_{1\leq j \leq n} \left|D_j(P\circ\Theta)(O_{\kappa}y)\right| > \beta \quad \text{for all} \quad y \in Q 
\\
\quad \text{and} \quad \mathcal R(P\circ\Theta\circ O_{\kappa},\beta) \cap Q \ne 
\emptyset \biggl\}. 
\end{multline*} 
Then 
\begin{multline*} 
\mathcal E' = \left(\bigcap_{j=1}^n\left(S\setminus \left\{Q \in S : 
\mathcal R((D_j(P\circ\Theta))\circ O_{\kappa},\beta) \cap Q \ne \emptyset 
\right\}\right)\right)
\\
\cap \left\{Q \in S : \mathcal R(P\circ\Theta\circ O_{\kappa},\beta) \cap Q 
\ne \emptyset \right\}. 
\end{multline*} 
We observe that the complement of a finite union of intervals in a strip $S$ is also a finite union of intervals, and the intersection of finite unions of 
intervals is also  a finite union of intervals.  Hence,  by Sublemma 1 we see that  $\mathcal E'$ is a union of at most $C_{n,N}$ intervals: $\mathcal E' = 
\cup_i J_i$.

Take any $J_i$.  Then by the intermediate value theorem we have  either
$$\min_{1\leq j \leq n} \kappa_jD_j(P\circ\Theta)(O_{\kappa}y) > \beta \quad \text{for all} \quad y \in \cup \left\{Q : Q \in J_i\right\}$$
or
$$\min_{1\leq j \leq n} \kappa_jD_j(P\circ\Theta)(O_{\kappa}y) < -\beta \quad \text{for all} \quad y \in \cup \left\{Q : Q \in J_i\right\}.$$
Thus $\mathcal E$ is a union of a subfamily $\{I_i\}$ of $\{J_i\}$ : $\mathcal E = 
\cup_i I_i$. 

Let $I$ be any interval in  $\{I_i\}$.  We need the following (see 
\cite[p.\ 151]{1}).  

\begin{sublemma}  There exists a constant $c_{n}$ depending only on $n$ such that if $x, y \in I$ and $y_n - x_n \geq c_n$, then  
$$y - x = \sum_{i=1}^n\lambda_iO_{\kappa}^{-1}e_i$$ 
for some $\lambda_i \in \bold R$ such that $\kappa_i\lambda_i \geq 3$.
\end{sublemma}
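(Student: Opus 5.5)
The plan is to exploit two features of the situation: an interval $I$ is a vertical column of unit cubes in a single strip, and $O_{\kappa}^{-1}$ is orthogonal with $O_{\kappa}(e_n)=n^{-1/2}\kappa$. Write $v_i=O_{\kappa}^{-1}e_i$, which is an orthonormal basis, and note that $\langle v_i, e_n\rangle=\langle e_i,O_{\kappa}e_n\rangle = n^{-1/2}\kappa_i$. Given $y-x$, the unique coefficients are $\lambda_i=\langle y-x, v_i\rangle$. The task is therefore to show $\kappa_i\langle y-x,v_i\rangle\geq 3$ for every $i$ once $y_n-x_n$ is large.

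The key step is to decompose $y-x$ into its vertical and horizontal parts. Since $x,y\in I\subset S_m$, both points lie in the same column $[0,1]^{n-1}\times\bold R$ shifted by $(m,0)$. Hence $y-x=(y_n-x_n)e_n+h$, where $h\perp e_n$ and $|h|\leq\sqrt{n-1}$. This gives
$$\kappa_i\lambda_i=(y_n-x_n)\kappa_i\langle e_n,v_i\rangle+\kappa_i\langle h,v_i\rangle=(y_n-x_n)n^{-1/2}+\kappa_i\langle h,v_i\rangle,$$
using $\kappa_i^2=1$. The error term satisfies $|\kappa_i\langle h,v_i\rangle|\leq|h|\leq\sqrt{n-1}$. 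It follows that $\kappa_i\lambda_i\geq n^{-1/2}(y_n-x_n)-\sqrt{n}$.

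To conclude, choose $c_n=\sqrt n(3+\sqrt n)=3\sqrt n+n$. Then $y_n-x_n\geq c_n$ gives $\kappa_i\lambda_i\geq 3$ for every $i$, as required.

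The only point needing care is the orientation convention, namely whether the expansion is in terms of $O_\kappa^{-1}e_i$ with $O_\kappa e_n=n^{-1/2}\kappa$. I would double-check that the inner-product identity $\langle O_\kappa^{-1}e_i,e_n\rangle=\langle e_i,O_\kappa e_n\rangle$ carries the correct sign. Beyond this there is no real obstacle, and the interval structure of $I$ is used only to confine $x$ and $y$ to a single column.
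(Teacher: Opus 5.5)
Your proof is correct and is essentially the paper's argument: the paper applies $O_\kappa$ to $y-x$, isolates the vertical contribution $(y_n-x_n)n^{-1/2}\kappa$, and bounds the rest using $|y_i-x_i|\le 1$ for $i<n$, which is the same computation as your inner products $\lambda_i=\langle y-x,O_\kappa^{-1}e_i\rangle$. The identity $\langle O_\kappa^{-1}e_i,e_n\rangle=\langle e_i,O_\kappa e_n\rangle$ you flagged holds by orthogonality, so the sign is right and your explicit constant $c_n=n+3\sqrt n$ works.
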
 
\begin{proof} We see that
\begin{align*}  
O_{\kappa}(y - x) &= \sum_{i=1}^n(y_i - x_i)O_{\kappa}e_i 
= \sum_{i=1}^{n-1}(y_i - x_i)O_{\kappa}e_i + (y_n - x_n)n^{-1/2}\kappa
\\
&= \sum_{i=1}^n\left(n^{-1/2}(y_n - x_n)\kappa_i + b_i\right)e_i
\end{align*} 
for some $b_i \in \bold R$ such that $|b_i| \leq c$, which is feasible since 
$|y_i - x_i| \leq 1$ for $i = 1, 2, \dots , n - 1$.  This readily implies the 
conclusion. 
\end{proof}  

Put  $Y = P\circ\Theta\circ O_{\kappa}$. Then $\nabla Y(x) = O_{\kappa}^{-1}(\nabla (P\circ\Theta)(O_{\kappa}x))$; so, if $x, y \in I$ and $y_n - x_n \geq c_n$, by Sublemma 4 we have
\begin{multline*} 
Y(y) - Y(x) = \int_0^1 \left\langle y - x , (\nabla Y)(x + t(y - x))
\right\rangle\, dt 
\\
= \int_0^1\sum_{i=1}^n\lambda_i \left\langle O_{\kappa}^{-1}e_i , 
O_{\kappa}^{-1}\left(\nabla (P\circ\Theta)(O_{\kappa}(x + t(y - x)))\right)
\right\rangle\, dt
\\
= \int_0^1\sum_{i=1}^n\lambda_i D_i(P\circ\Theta)(O_{\kappa}(x + t(y - x)))\,  
dt \geq \sum_{i=1}^n\lambda_i\kappa_i\beta \geq 3n\beta > 3\beta,
\end{multline*} 
where $\langle \cdot, \cdot\rangle$ denotes the inner product in $\bold R^n$.  
Since $\mathcal R(Y,\beta) \cap Q \ne \emptyset$ for all $Q \in I$, we can conclude that $\card (I) \leq c_n + 3$.  

Combining the above results, we have $\card (\mathcal E) \leq C_{n,N}$ as 
claimed.    From this and (6.5) we easily see that 
\begin{equation} \label{tag 6.6} 
\mathcal N\left(O_{\kappa}^{-1}(E_{\kappa})\cap 
\left(\cup \mathcal D_0^*\right)\right)
 \subset \mathcal L,  
\end{equation} 
where 
$\mathcal L = \cup \left\{Q : Q \in \bigcup_{m\in \text{${\bold Z}^{n-1}$}}
J_{m} \right\}$ 
for some $J_m \subset S_m$ with $\card (J_{m}) \leq C_{n,N}$. 
\par 
By (6.3) and (6.6) we have 
$$
\mathcal N\left(O_{\kappa}^{-1}(E_{\kappa})\right)  \subset 
\mathcal N(G) \cup \mathcal N\left(O_{\kappa}^{-1}(E_{\kappa})\cap \left(\cup 
\mathcal D_0^*\right)\right)  
\subset \left(\cup_i U_i'(\mathcal L_i')\right) \cup \mathcal L ;
$$
and so, observing $\mathcal N\left(O_{\kappa}^{-1}(E_{\kappa})\right) = 
O_{\kappa}^{-1}\mathcal N\left(E_{\kappa}\right)$,  
\begin{equation}  \label{tag 6.7}  
 \mathcal N\left(E_{\kappa}\right)  \subset  \left(\cup_i O_{\kappa}U_i'
(\mathcal L_i')\right) \cup O_{\kappa}(\mathcal L) . 
\end{equation} 
 \par  
Since $\mathcal N(\mathcal R(P\circ\Theta,\beta)) = \Theta^{-1}\mathcal N(\mathcal R(P,\beta))$, by (6.1), (6.2) with $A(N - 1)$  and (6.7) we get $A(N)$.  This completes the 
proof of Lemma 11.

\begin{proof}[Proof of Lemma $6$]  We see that $\mathcal R(P,2^{Nm}) =
 2^m\mathcal R(\tilde{P},1)$, where 
$$\tilde{P}(x) = 2^{-Nm}P(2^mx). $$  
Note that $\|\tilde{P}\| = 1$.  
(See \cite[p.\ 151]{1}.)  This observation enables us to assume $m = 0$ to prove Lemma 6. Clearly, we may also assume $\gamma = 1$. 

Thus it is sufficient to show, for $k \geq 0$, 
\begin{equation} \label{tag 6.8}  
\left|\left\{x \in B(a,2^k) : d(x, \mathcal R(P,1)) \leq 1 \right\}\right| \leq C_{n,N}2^{(n-1)k}  
\end{equation} 
uniformly in $a \in \bold R^n$.  

If $n = 1$, (6.8) easily follows from  Chanillo-Christ 
\cite[{\sc Lemma} 3.2]{1} (see also \cite{2}).   
Suppose  $n \geq 2$. Then,  (6.8) follows from Lemma 11 with $\beta = 1$ and 
 the obvious estimate: 
$$\left|B(a,2^k) \cap U_i(\mathcal L_i)\right| \leq c2^{(n-1)k},$$
where $U_i(\mathcal L_i)$ is as in Lemma 11.  This completes the proof of 
Lemma 6.
\end{proof}

\end{document}